\newtheorem{thm}{Theorem}[section]
\newtheorem{prop}[thm]{Proposition}
\newtheorem{lem}[thm]{Lemma}
\newtheorem{cor}[thm]{Corollary}
\theoremstyle{definition}
\newtheorem{quest}[thm]{Question}
\newtheorem{notation}[thm]{Notation}
\newtheorem{convention}[thm]{Convention}
\theoremstyle{remark}
\newtheorem{rem}[thm]{Remark}
\numberwithin{equation}{section}
\def\hq{/\hspace{-0.14cm}/}
\DeclareMathOperator{\Nef}{Nef}
 \DeclareMathOperator{\Spec}{Spec}
\DeclareMathOperator{\Ker}{Ker}
\DeclareMathOperator{\Pic}{Pic}
\DeclareMathOperator{\Hilb}{Hilb}
\DeclareMathOperator{\Ima}{Im}
\DeclareMathOperator{\Id}{Id}
\DeclareMathOperator{\Proj}{Proj}
\DeclareMathOperator{\Eff}{Eff}
\DeclareMathOperator{\Chow}{Chow}
\DeclareMathOperator{\mult}{mult}
\DeclareMathOperator{\thh}{th}
\DeclareMathOperator{\ic}{ci}
\DeclareMathOperator{\Eq}{E}
\DeclareMathOperator{\Cox}{Cox}
\DeclareMathOperator{\HF}{HF}
\begin{document}

\title[Geometry of the parameter space for complete intersections]
{On the birational geometry of the parameter space for codimension $2$ complete intersections}
\author{Olivier Benoist}
\address{D\'epartement de math\'ematiques et applications\\
\'Ecole normale sup\'erieure\\
45 rue d'Ulm\\
75230 Paris Cedex 05\\
France}
\email{olivier.benoist@ens.fr}

\renewcommand{\abstractname}{Abstract}
\begin{abstract}
Codimension $2$ complete intersections in $\mathbb{P}^N$ have a natural parameter space $\bar{H}$:
a projective bundle over a projective space given by the choice of the lower degree
equation and of the higher degree equation up to a multiple of the first. Motivated by the question of existence of
complete families of smooth complete intersections, we study the birational geometry of $\bar{H}$.
In a first part, we show that the first contraction of the MMP for $\bar{H}$ always
exists and we describe it. Then, we show that it is possible to run the full MMP for $\bar{H}$,
and we describe it, in two degenerate cases.
As an application, we prove the existence of complete curves in the punctual Hilbert scheme
of complete intersection subschemes of $\mathbb{A}^2$.
\end{abstract}
\maketitle


\section*{Introduction}\label{intro}

\subsection{Proper families of smooth complete intersections}

In all this paper, we work over an algebraically closed field $\mathbbm{k}$. 
It is difficult to construct interesting
complete families of smooth projective varieties over $\mathbbm{k}$.
The motivation for this paper is the following particular instance of this general problem:

\begin{quest}\label{q1}
Let $N\geq 3$ and $2\leq d_1<d_2$. Do there exist non-isotrivial complete families of
smooth complete intersections of degrees $(d_1,d_2)$ in $\mathbb{P}^N$?
\end{quest}

In order to study Question  \ref{q1}, we will parametrize these complete intersections.
Let $N\geq 1$ and $1\leq d_1<d_2$ be integers. Let $\bar{H}^{(N)}_{d_1}=\mathbb{P}(H^0(\mathbb{P}^N,\mathcal{O}(d_1)))$
be the space of degree $d_1$ hypersurfaces in $\mathbb{P}^N$,
and let $\bar{H}^{(N)}_{d_1,d_2}\to\bar{H}^{(N)}_{d_1}$ be the projective bundle whose fiber over
$\langle F\rangle$ is the projective space $\mathbb{P}(H^0(\mathbb{P}^N,\mathcal{O}(d_2))/\langle F\rangle)$
of degree $d_2$ equations up to a multiple of $F$. Points of  $\bar{H}^{(N)}_{d_1,d_2}$ will be denoted by $[F,G]$.
The supserscripts will be omitted when no confusion is possible.

Let $H_{d_1,d_2}=\{[F,G]\in\bar{H}_{d_1,d_2}|\{F=G=0\}\textrm{ is smooth of codimension }2\}$, and 
let $\Delta$ be the discriminant divisor, that is the complement
of $H_{d_1,d_2}$ in $\bar{H}_{d_1,d_2}$. 
Let $H^{\ic}_{d_1,d_2}=\{[F,G]\in\bar{H}_{d_1,d_2}|\{F=G=0\}\textrm{ has codimension }2\}$.
When $N\geq2$, $H^{\ic}_{d_1,d_2}$ (resp. $H_{d_1,d_2}$) is naturally identified with the
Hilbert scheme of complete intersections (resp. smooth complete intersections)
of degrees $d_1,d_2$ in $\mathbb{P}^N$. It will be convenient at several places
not to exclude the case $N=d_1=1$: see \ref{conv11} for the relevant conventions.

The more precise question we will be interested in is:

\begin{quest}\label{q2}
Does $H_{d_1,d_2}$ contain complete curves?
\end{quest}

When $N\geq 3$ and $d_1\geq 2$, the linear group $PGL_{N+1}$ acts properly on
$H_{d_1,d_2}$ so that the quotient $M_{d_1,d_2}=H_{d_1,d_2}/PGL_{N+1}$
exists as a separated algebraic space (see \cite{Oolsep} Corollaire 1.8): the moduli space of smooth complete intersections. 
Question \ref{q1} asks for complete curves in $M_{d_1,d_2}$: it is thus a weaker question than Question \ref{q2}.

The analogue of Question \ref{q2} for smooth hypersurfaces always has a negative answer as the corresponding discriminant
is always ample, and cannot avoid a complete curve. 
A first indication that the answer to Question \ref{q2} might be positive
is that the discriminant divisor $\Delta$ is never ample (\cite{Oolqp} Remarque 2.9). 
Then, a natural strategy to answer it is to try to  contract $\Delta$, at least birationally.
To do this, one needs to study the birational geometry of $\bar{H}_{d_1,d_2}$. More precisely, the two following
questions are relevant:

\begin{quest}\label{qMDS}
Is $\bar{H}_{d_1,d_2}$ a Mori dream space?
\end{quest}

\begin{quest}\label{qEff}
Does $\Delta$ generate an extremal ray of $\Eff(\bar{H}_{d_1,d_2})$?
\end{quest}

We refer to \cite{MDS} for the definition and basic properties of Mori dream spaces.
This roughly means that it is possible to run the minimal model program (MMP) for $\bar{H}_{d_1,d_2}$ in every direction
(\cite{MDS} Proposition 1.11).
Since it has Picard rank $2$ (it is a projective bundle over a projective space),
there are only two directions in which it is possible to run it.
One is trivial: we get the contraction
$\bar{H}_{d_1,d_2}\to\bar{H}_{d_1}$, and what we will call the MMP for $\bar{H}_{d_1,d_2}$ is the 
MMP in the other direction.

\begin{prop}
A positive answer to Questions \ref{qMDS} and \ref{qEff} would answer positively Question \ref{q2}.
\end{prop}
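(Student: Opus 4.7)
The plan is to use the Mori dream space structure to produce a birational or Mori-fibration model $\bar H \dashrightarrow Y$ that contracts $\Delta$, then find a complete curve in $Y$ or in a general fiber of the fibration, and lift it back to $\bar H \setminus \Delta$. First, $\bar H = \bar H_{d_1,d_2}$ has Picard rank $2$ as a projective bundle over a projective space, so $\Eff(\bar H)$ has exactly two extremal rays: one generated by $\alpha$, the pullback of the hyperplane class from $\bar H_{d_1}$ (this class is effective but not big, so extremal in $\Eff$), and, by (Question~\ref{qEff}), the other generated by $[\Delta]$.

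Assuming (Question~\ref{qMDS}), $\bar H$ is a Mori dream space, so by \cite{MDS} the MMP in the direction of $\Delta$ terminates after finitely many small $\mathbb Q$-factorial modifications $\bar H \dashrightarrow X_1 \dashrightarrow \cdots \dashrightarrow X_k$ followed by a Mori contraction $f : X_k \to Y$. Letting $\Delta_k$ denote the strict transform of $\Delta$, this contraction falls in one of two cases: (a) $f$ is a divisorial contraction with exceptional divisor $\Delta_k$ and $f(\Delta_k) \subset Y$ of codimension at least $2$; or (b) $f$ is a Mori fibration in which $[\Delta_k]$ is proportional to $f^*h$ for $h$ an ample generator of $\Pic(Y)_{\mathbb R}$ (because the terminal Nef chamber meets $\partial \Eff$ along the $f^*h$-ray, which must be $\mathbb R_{+}[\Delta_k]$), so that $\mathrm{supp}(\Delta_k) \subset f^{-1}(D_0)$ for some proper closed $D_0 \subsetneq Y$.

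In case (a), a general complete curve $C \subset Y$ avoids the codimension-$\geq 2$ subset $f(\Delta_k) \cup f(E')$, where $E' \subset X_k$ is the codimension-$\geq 2$ locus where $\bar H \dashrightarrow X_k$ fails to be an isomorphism; its preimage is a complete curve in $X_k \setminus (\Delta_k \cup E')$. In case (b), a general fiber of $f$ over $Y \setminus (D_0 \cup f(E'))$ is a positive-dimensional projective variety disjoint from $\Delta_k$, and one picks a complete curve inside avoiding the lower-dimensional trace of $E'$. In either case, the isomorphism $X_k \setminus E' \cong \bar H \setminus E$ (with $E \subset \bar H$ of codimension $\geq 2$) transports the curve to a complete curve in $H_{d_1,d_2} = \bar H \setminus \Delta$.

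The main point is to correctly describe the terminal model of the $\Delta$-MMP from (qMDS) and (qEff): the Picard rank $2$ and the extremality of $[\Delta]$ force $f$ to contract $\Delta_k$ in the way claimed. Once this is done, the dimension counts ensuring that generic curves avoid the various codimension-$\geq 2$ loci are routine.
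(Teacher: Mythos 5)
Your proof is correct and follows essentially the same route as the paper's: run the MMP in the non-trivial direction, split into the divisorial-contraction and Mori-fibration cases, use the extremality of $[\Delta]$ in $\Eff(\bar{H}_{d_1,d_2})$ (together with Picard rank $2$) to identify the contracted divisor, respectively the divisor pulled back from the base, with $\Delta$, and then take generic hyperplane sections of the target, respectively of a general fiber, to produce a complete curve avoiding $\Delta$ and the codimension-$\geq 2$ flipped loci. The only differences are cosmetic (you name the locus $E'$ where the small modifications fail to be isomorphisms explicitly, and phrase the fibration case via $\mathrm{supp}(\Delta_k)\subset f^{-1}(D_0)$ rather than via the image of $\Delta$ being a divisor).
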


\begin{proof}
Since $\bar{H}_{d_1,d_2}$ is a Mori dream space, it is possible to run its MMP.
Thus, we obtain a sequence of flips, and then either a divisorial contraction to
a projective variety $X$ of Picard rank $1$, or a fibration $Y\to X$ over a Picard rank $1$ variety.

In the first case, the contracted divisor is an extremal ray of $\Eff(\bar{H}_{d_1,d_2})$: it is necessarily $\Delta$.
Take generic hyperplane sections of $X$ to get a complete curve in $X$. This curve will avoid both the
image of $\Delta$ and the flipped loci as they are of codimension $\geq 2$ in $X$.
Thus, it induces a complete curve in $H_{d_1,d_2}$, as wanted.

In the second case, the line bundle that induces the fibration is an extremal ray of
$\Eff(\bar{H}_{d_1,d_2})$: it is necessarily $\mathcal{O}(\Delta)$.
In particular, the image of $\Delta$ in $X$ is a divisor. Choose a general fiber $Y_x$ of $Y\to X$: it doesn't meet $\Delta$,
and the flipped loci have codimension $\geq 2$ in it. Take generic hyperplane sections of $Y_x$
to get a complete curve in $Y$ avoiding both $\Delta$ and the flipped loci.
It induces a complete curve in $H_{d_1,d_2}$, as wanted.
\end{proof}

\subsection{Main theorems}

We are not able to answer Questions \ref{qMDS} and \ref{qEff} in a generality that would shed light on Question \ref{q1}.
However, the goal of this paper is to give evidence for these questions. 
\vspace{1em}

In the first section of this paper, we explain why the first contraction
of the MMP for $\bar{H}_ {d_1,d_2}$ always exists, and we describe it geometrically.
We do not know in general, when this contraction is small, whether its flip exists.

 Let us be more precise. As a projective bundle over a projective
space, $\bar{H}_{d_1,d_2}$ has Picard rank $2$, and we will denote its line bundles by $\mathcal{O}(l_1,l_2)$, where $\mathcal{O}(1,0)$ comes from the base 
and $\mathcal{O}(0,1)$ is the natural relatively ample line bundle. In \cite{Oolqp}, the nef cone of $\bar{H}_{d_1,d_2}$
is shown to be generated by $\mathcal{O}(1,0)$ and $\mathcal{O}(d_2-d_1+1,1)$.
Of course, $\mathcal{O}(1,0)$ induces the projection $\bar{H}_{d_1,d_2}\to\bar{H}_{d_1}$. Although it is
not stated explicitely there, the proof in \cite{Oolqp} shows that $\mathcal{O}(d_2-d_1+1,1)$ is also semi-ample (see Proposition \ref{cont1}).
Thus, the first contraction of the MMP for 
$\bar{H}_{d_1,d_2}$ always exists. We show here that its description is as follows:

\begin{thm}[The first contraction]\label{th1}\hspace{1em}

\begin{enumerate}
\item[(i)] The locus contracted by the first contraction is $\bar{H}_{d_1-1}\times\bar{H}_{1,1}\subset \bar{H}_{d_1,d_2}$, where the inclusion is given by
$(P,[L,\Lambda])\mapsto [PL,P\Lambda^{d_2-d_1+1}]$.
\item[(ii)]
The contracted curves are exactly those in the fibers of the natural morphism
$\bar{H}_{d_1-1}\times\bar{H}_{1,1}\to\bar{H}_{d_1-1}\times\mathbb{G}(2, H^0(\mathbb{P}^N,\mathcal{O}(1)))$,
where $\mathbb{G}(2,\cdot)$ denotes the Grassmannian
of $2$-dimensional subspaces.
\end{enumerate}
\end{thm}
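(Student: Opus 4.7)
My plan is to use the semi-ampleness of $\mathcal O(d_2-d_1+1,1)$ (Proposition~\ref{cont1}) to obtain the first contraction $c\colon\bar H_{d_1,d_2}\to X$. Since $\bar H_{d_1,d_2}$ has Picard rank $2$ with nef cone generated by $\mathcal O(1,0)$ and $\mathcal O(d_2-d_1+1,1)$, the curves contracted by $c$ are exactly the irreducible curves $C$ with $\mathcal O(d_2-d_1+1,1)\cdot C=0$; numerically these lie in the extremal ray spanned by a class $\ell_2$ satisfying $\mathcal O(1,0)\cdot\ell_2=1$ and $\mathcal O(0,1)\cdot\ell_2=-(d_2-d_1+1)$. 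Proving (i) and (ii) thus amounts to identifying the curves realising this class and checking that they sweep out exactly $\iota(\bar H_{d_1-1}\times\bar H_{1,1})$.

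The heart of the argument is a line-bundle computation on $\bar H_{d_1-1}\times\bar H_{1,1}$. Set $W'=H^0(\mathcal O(d_1-1))$, $A=H^0(\mathcal O(1))$, $U=H^0(\mathcal O(d_2-d_1))$, $V=H^0(\mathcal O(d_2))$, and use bidegree $(a,b)$ for the projective bundle $\bar H_{1,1}\to\bar H_1$. Since the composition of $\iota$ with the projection to $\bar H_{d_1}$ is the Segre-like map $(P,[L,\Lambda])\mapsto[PL]$, one finds $\iota^*\mathcal O(1,0)=\mathcal O_{\bar H_{d_1-1}}(1)\boxtimes\mathcal O_{\bar H_{1,1}}(1,0)$. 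For $\iota^*\mathcal O(0,1)$ I trace the universal element $P\Lambda^{d_2-d_1+1}$: the factor $P$ contributes $\mathcal O_{\bar H_{d_1-1}}(1)$ and $\Lambda^{d_2-d_1+1}$ contributes $\mathcal O_{\bar H_{1,1}}(0,d_2-d_1+1)$, and since $\Lambda^{d_2-d_1+1}$ is never divisible by $L$ this element is nonvanishing in the pullback of the quotient bundle $\mathcal Q=V\otimes\mathcal O/\mathcal O(-1)\otimes U$, yielding an isomorphism onto $\iota^*\mathcal O(0,-1)$; hence $\iota^*\mathcal O(0,1)=\mathcal O_{\bar H_{d_1-1}}(1)\boxtimes\mathcal O_{\bar H_{1,1}}(0,d_2-d_1+1)$. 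Combining, $\iota^*\mathcal O(d_2-d_1+1,1)=\mathcal O_{\bar H_{d_1-1}}(d_2-d_1+2)\boxtimes\mathcal O_{\bar H_{1,1}}(d_2-d_1+1,d_2-d_1+1)$. On a $\mathbb P^1$-fiber $\{P\}\times\mathbb P(V)$ over $(P,V)\in\bar H_{d_1-1}\times\mathbb G(2,A)$, the standard identifications $\mathcal O_{\bar H_{1,1}}(1,0)|_{\mathbb P(V)}\cong\mathcal O(1)$ and $\mathcal O_{\bar H_{1,1}}(0,1)|_{\mathbb P(V)}\cong\mathcal O(-1)$ (the latter from $V/L$ being the tautological quotient line bundle on $\mathbb P(V)$) cancel, so $\iota^*\mathcal O(d_2-d_1+1,1)$ restricts trivially. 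This proves (ii) and the inclusion $\iota(\bar H_{d_1-1}\times\bar H_{1,1})\subseteq\mathrm{Exc}(c)$.

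The remaining and harder direction is that every irreducible contracted curve lies in $\iota(\bar H_{d_1-1}\times\bar H_{1,1})$. Such a curve $C$ has numerical class $n[\ell_2]$ for some $n\geq 1$; focusing first on $n=1$, its projection is a line $\mathfrak l\subset\bar H_{d_1}$ and $C$ is a section of $\bar H_{d_1,d_2}\times_{\bar H_{d_1}}\mathfrak l\to\mathfrak l$ corresponding to a sub-line bundle $\mathcal L\hookrightarrow\mathcal Q|_{\mathfrak l}$ of degree $d_2-d_1+1$. I expect the main obstacle to be showing that the existence of such an $\mathcal L$ forces $\mathfrak l$ to consist of forms of the shape $PL$ for some fixed $P\in\bar H_{d_1-1}$ and $L$ varying in a $2$-plane $V\subset A$, and forces the section to be $L\mapsto[PL,P\Lambda^{d_2-d_1+1}]$ with $\Lambda$ complementary to $L$ in $V$. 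The natural strategy is to lift $\mathcal L$ to a rank-$(\dim U+1)$ subbundle $\widetilde{\mathcal L}\subset V\otimes\mathcal O_{\mathfrak l}$ via $0\to\mathcal O(-1)\otimes U\to V\otimes\mathcal O\to\mathcal Q\to 0$ restricted to $\mathfrak l\cong\mathbb P^1$; the $(d_2-d_1+2)$-dimensional space $H^0(\widetilde{\mathcal L})\subset V$ should, via the multiplicative structure of the polynomial ring, force the desired factorization, and the case $n\geq 2$ should reduce to $n=1$ by degeneration to generic lines in the image of $C$.
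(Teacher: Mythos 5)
The first half of your argument is sound and essentially reproduces what the paper gets from Lemmas \ref{lem1} and \ref{lem2}: your computation $\iota^*\mathcal{O}(0,1)=\mathcal{O}_{\bar H_{d_1-1}}(1)\boxtimes\mathcal{O}_{\bar H_{1,1}}(0,d_2-d_1+1)$ is correct (the map $P\otimes\overline{\Lambda}^{\otimes(d_2-d_1+1)}\mapsto \overline{P\Lambda^{d_2-d_1+1}}$ is well defined modulo $PL$ and nowhere zero), and the cancellation on the $\mathbb{P}(V)$-fibers of $\pi$ does show that those curves are contracted. But this only gives one inclusion in (i) and one implication in (ii).

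The converse -- that \emph{every} contracted curve lies in $\iota(\bar H_{d_1-1}\times\bar H_{1,1})$ and is contracted by $\pi$ -- is the actual content of the theorem, and your proposal does not prove it; it only states a plan (``should force the desired factorization''). Two concrete obstacles show the plan as written cannot be completed directly. First, the data you propose to exploit (a degree-$(d_2-d_1+1)$ sub-line bundle of $\mathcal{Q}|_{\mathfrak l}$, equivalently the existence of the forms $\Gamma_x$ of multiplicity $\geq d_2-d_1+1$ lying in every $\langle F,G\rangle$) is \emph{not} sufficient to force the factorization $[PL,P\Lambda^{d_2-d_1+1}]$ in positive characteristic: Remark \ref{frob} exhibits the curve $t\mapsto[(X+t)^p,X^{2p}]$, which satisfies exactly this kind of condition (with $\Gamma_x=(X-x)^{2p}$) without having the required form. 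The paper has to inject an extra geometric input -- that for a general point of an exceptional component the lower-degree form has a reduced irreducible component (Lemma \ref{lem3}, proved by a $\mathbb{G}_m$-degeneration and linearization argument) -- before the $\mathbb{P}^1$-analysis of Lemma \ref{lem5} can conclude. Second, your reduction of the case $n\geq 2$ ``by degeneration to generic lines in the image of $C$'' does not parse: the image of $C$ in $\bar H_{d_1}$ is a single curve of degree $n$, not a family containing lines, and degenerating $C$ inside the exceptional locus presupposes knowing that locus, which is what you are trying to determine. The paper instead treats an arbitrary contracted curve by inducting on $d_1$ via the stratification by $\deg\gcd(F,G)$ and the multiplication maps $\varphi_k$, reducing the core case to $N=1$ by generic hyperplane sections (Lemmas \ref{lem4} and \ref{lem5}); some argument of comparable substance is needed where your proposal currently has a placeholder.
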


\vspace{1em}

In the second section, we answer positively Questions \ref{qMDS} and \ref{qEff} when $d_1=1$ and $N\geq 2$. This particular case
is not interesting from the point of view of Question \ref{q2}, since it is not difficult to construct complete families
of smooth degenerate complete intersections (see Proposition \ref{compic1}). The idea is to realize the MMP for 
$\bar{H}_{d_1,d_2}$ as a variation of GIT.

\begin{thm}[Degenerate complete intersections]\label{th2}
If $N\geq 2$ and $d_1=1$, then: 
\begin{enumerate}
\item[(i)] The variety $\bar{H}_{1,d_2}$ is a Mori dream space and its effective cone is generated by $\mathcal{O}(1,0)$ and $\Delta$. 
\item[(ii)] Unless $d_2=2$, or $N=2$ and $d_2=3$, the last step of the MMP for $\bar{H}_{1,d_2}$ is a fibration over the GIT
moduli space of degree $d_2$ hypersurfaces in $\mathbb{P}^{N-1}$.
\item[(iii)] If $d_2=2$, or $N=2$ and $d_2=3$, the last model obtained by the MMP is a compactification of $H_{1,d_2}$ with a boundary of codimension $\geq2$.
\end{enumerate}
\end{thm}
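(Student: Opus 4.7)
The plan is to realize the MMP for $\bar{H}_{1,d_2}$ as a variation of GIT, as announced in the introduction. Let $V^* := H^0(\mathbb{P}^N, \mathcal{O}(1))$ and $S_k := H^0(\mathbb{P}^N, \mathcal{O}(k))$. Consider the affine space $\mathcal{A} = V^* \oplus S_{d_2}$ carrying the action of $\mathbb{G}_m \times \mathbb{G}_m \ltimes S_{d_2-1}$, where the two $\mathbb{G}_m$-factors rescale $L \in V^*$ and $G \in S_{d_2}$, and where $H \in S_{d_2-1}$ acts by $(L, G) \mapsto (L, G + L H)$. By construction $\bar{H}_{1,d_2}$ is the geometric quotient of the appropriate stable locus of $\mathcal{A}$ by this group. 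After eliminating the unipotent radical $W := S_{d_2-1}$ by passing to the ring of invariants $R := \mathbb{C}[\mathcal{A}]^W$, one obtains a reductive GIT problem for the torus $\mathbb{G}_m^2$, whose two-dimensional lattice of linearizations (variation of GIT) realizes the Mori chambers of $\bar{H}_{1,d_2}$.

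For part (i), I first establish that $R$ is finitely generated, so that $\bar{H}_{1,d_2}$ is a Mori dream space, and I identify $R$ with its Cox ring. Generators in bidegree $(1, 0)$ are the coordinate linear forms $L_0, \ldots, L_N$; generators in some bidegree $(c, 1)$ come from natural $W$-invariants built from the restriction of $G$ to the hyperplane $\{L = 0\}$. Explicit computation shows that the minimal generators lie in bidegrees $(1, 0)$ and $(c, 1)$, and computing the class of $\Delta$ verifies that it generates the second extremal ray, yielding $\Eff(\bar{H}_{1, d_2}) = \langle \mathcal{O}(1, 0), \mathcal{O}(\Delta)\rangle$.

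For parts (ii) and (iii), I run the MMP by traversing the VGIT chambers of $\mathbb{G}_m^2$ acting on $\Spec(R)$. The rational map
\[
\bar{H}_{1,d_2} \dashrightarrow \mathbb{P}(H^0(\mathbb{P}^{N-1}, \mathcal{O}(d_2))) \hq PGL_N,
\]
sending $[L, G]$ to the $PGL_N$-class of $G|_{\{L=0\}}$, becomes regular in the final VGIT chamber and identifies the last model as a fibration over the GIT moduli space of degree-$d_2$ hypersurfaces in $\mathbb{P}^{N-1}$, establishing part (ii). When $d_2 = 2$ (for any $N \geq 2$) or $(N, d_2) = (2, 3)$, every smooth degree-$d_2$ hypersurface in $\mathbb{P}^{N-1}$ lies in a single $PGL_N$-orbit, so this moduli is a point; the final step of the MMP then contracts $\Delta$ to a subvariety of codimension $\geq 2$, giving the compactification of part (iii).

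The main obstacle is showing the finite generation of the non-reductive invariant ring $R = \mathbb{C}[\mathcal{A}]^W$, since $W$ is unipotent so Hilbert's 14th problem does not apply a priori. It must be proved by exhibiting explicit generators and relations, exploiting the specific geometric structure of the $W$-action (in particular that $W$ acts by translations depending linearly on $L$). Once $R$ is fully understood, the rest of the argument — matching the VGIT chamber decomposition with the Mori chamber decomposition and identifying the final GIT quotient — follows from standard VGIT techniques together with the well-known structure of GIT moduli of low-degree hypersurfaces in the exceptional cases.
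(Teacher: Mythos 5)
Your overall plan (realize the MMP for $\bar{H}_{1,d_2}$ as a variation of GIT) is the one announced in the paper, but the way you set it up leaves the central point unproved. You present $\bar{H}_{1,d_2}$ as a quotient of the affine space $\mathcal{A}=V^*\oplus S_{d_2}$ by a group with unipotent radical $W=S_{d_2-1}$, and everything hinges on the finite generation of the non-reductive invariant ring $R=\mathbbm{k}[\mathcal{A}]^{W}$. This is precisely the reformulation of Question \ref{qMDS} as an instance of Hilbert's fourteenth problem that the paper mentions as a possible (but unexecuted) strategy; since $W$ is unipotent, no general theorem applies, and you only announce that finite generation ``must be proved by exhibiting explicit generators.'' Worse, the generator structure you assert is not correct: if the Cox ring were generated in bidegrees $(1,0)$ and $(c,1)$, then every section supported on the extremal ray spanned by $\Delta$ would be a monomial in the degree-$(c,1)$ generators (a product $(1,0)^i(c,1)^j$ lies on that extremal ray only if $i=0$), so the section ring of that ray --- which, since the last model fibres over the GIT moduli space $\mathcal{H}$ of degree $d_2$ hypersurfaces in $\mathbb{P}^{N-1}$, is (a Veronese of) the ring of $SL_N$-invariants of such hypersurfaces --- would be generated in degree $1$ in the coefficients of $G$. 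But that invariant ring has no nonzero elements of degree $1$ and requires generators in several higher degrees (already degrees $2$ and $3$ for binary quartics). So the ``explicit computation'' you invoke cannot hold whenever $\mathcal{H}$ is positive-dimensional, i.e.\ outside the degenerate cases of (iii).

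The paper sidesteps all of this by changing the presentation: it writes $\bar{H}_{1,d_2}$ as a \emph{reductive} GIT quotient $X\hq_{\mathcal{O}(\varepsilon,1)}SL_N$ with $X=\bar{H}_{d_2}^{(N-1)}\times\mathbb{P}(M_{N+1,N})$, i.e.\ a hypersurface in $\mathbb{P}^{N-1}$ together with an embedding $\mathbb{P}^{N-1}\hookrightarrow\mathbb{P}^N$ (the trick of Proposition \ref{compic1}), with stability checked by the Hilbert--Mumford criterion. Finite generation is then automatic, the Mori dream space property follows from the quotient theorem of \cite{GITMDS}, and the two ends of the VGIT ($\mathcal{O}(0,1)$ giving $\bar{H}_1$, $\mathcal{O}(1,0)$ giving $\mathcal{H}$, and $\mathcal{O}(1,\varepsilon)$ in the degenerate cases) yield (i)--(iii). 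Without this device, or some other genuine proof that $R$ is finitely generated, your argument does not get off the ground. A secondary gap: in (iii) you assert without proof that the boundary of the last model has codimension $\geq 2$; this requires showing that in those cases every singular hypersurface is GIT-unstable (because $SL_N$ acts transitively on $\bar{H}_{d_2}^{(N-1)}\setminus\Delta$, so invariant divisors are multiples of $\Delta$) and that the locus of matrices of rank $<N$ has codimension $\geq 2$.
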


\vspace{1em}

In the third and main section of this paper, we answer positively Questions \ref{qMDS} and \ref{qEff} when $N=1$. Of course,
in this case, $H_{d_1,d_2}$ does not have an interpretation as a Hilbert scheme
of $\mathbb{P}^1$. However, to a point $[F,G]\in H_{d_1,d_2}$, it is
possible to associate the locus $\{F=G=0\}\subset\mathbb{A}^2$, realizing $H_{d_1,d_2}$ as a locally closed
subset of the punctual Hilbert scheme of $\mathbb{A}^2$. More precisely, the closure of $H_{d_1,d_2}$ in
the Hilbert scheme of $\mathbb{A}^2$ is an example of a multigraded Hilbert scheme \cite{multigraded},
that we will denote by $\hat{H}_{d_1,d_2}$.

Note that in this case, the discriminant divisor $\Delta$ is precisely
given by the classical resultant of two polynomials of degrees $d_1$ and $d_2$.

\begin{thm}[Punctual complete intersections]\label{th3}
Suppose that $N=1$. 
\begin{enumerate}
\item[(i)] The variety $\bar{H}_{d_1,d_2}$ is a
Mori dream space and its effective cone is generated by $\mathcal{O}(1,0)$ and $\Delta$. 
\item[(ii)] The MMP for $\bar{H}_{d_1,d_2}$ flips the
loci $W_i:=\{[F,G]|\deg(\gcd(F,G))\geq d_1-i\}$ for $1\leq i\leq d_1-2$ and eventually contracts $W_{d_1-1}=\Delta$.
\item[(iii)] The last model of the MMP for $\bar{H}_{d_1,d_2}$
is a compactification of $H_{d_1,d_2}$ with codimension $2$ boundary, that admits a stratification
whose normalized strata are $(H_{d_1-i,d_2+i})_{0\leq i\leq d_1-1}$.
\end{enumerate}
\end{thm}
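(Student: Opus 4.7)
The plan is to run the full MMP explicitly as a chain of $d_1 - 2$ flips followed by a single divisorial contraction, and to identify the last model with the closure $\hat{H}_{d_1, d_2}$ in the multigraded Hilbert scheme described before the theorem. Specializing Theorem \ref{th1} to $N = 1$, one first observes that $\bar{H}_{1,1}$ is one-dimensional (since $H^0(\mathbb{P}^1,\mathcal{O}(1))/\langle L\rangle$ is a one-dimensional vector space), so the map of Theorem \ref{th1} realizes the full $W_1$ as $\bar{H}_{d_1-1}\times\bar{H}_{1,1}$; in particular the exceptional locus of the first contraction is already all of $W_1$. When $d_1 = 2$ this first contraction is itself divisorial and contracts $\Delta$, proving the theorem directly; we therefore assume $d_1 \geq 3$.

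For the inductive step, suppose that $\bar{H}^{(i-1)}$ has been constructed together with a rational map $\bar{H}_{d_1,d_2} \dashrightarrow \bar{H}^{(i-1)}$ realizing the first $i-1$ flips (with $\bar{H}^{(0)} := \bar{H}_{d_1, d_2}$) for some $1 \leq i \leq d_1-2$. The strict transform of $W_i$ in $\bar{H}^{(i-1)}$ should be naturally a fibration over $\bar{H}_{d_1-i}$ via $[F,G]\mapsto[\gcd(F,G)]$, whose fibers parametrize the complementary coprime pair of degrees $(i, d_2-d_1+i)$, so that each fiber is isomorphic to $\bar{H}_{i,d_2-d_1+i}$. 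I would construct the flip by computing the normal bundle along this locus and performing a symmetric weighted blow-up and blow-down, producing a new exceptional locus $W_i^+$ over $\bar{H}_{d_1-i}$ with projective-space fibers of complementary dimension. After all $d_1-2$ flips, the remaining strict transform of $\Delta = W_{d_1 - 1}$ is contracted divisorially, producing the last model $\bar{H}^{(d_1-1)}$.

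A more unified construction would realize all intermediate models $\bar{H}^{(i)}$ simultaneously as GIT quotients of a single affine variety by a torus, in the spirit of the proof of Theorem \ref{th2}; the $d_1$ chambers of the secondary fan would then correspond to the $d_1$ birational models of the MMP. Under this picture, the identification of the last model with the multigraded Hilbert scheme $\hat{H}_{d_1,d_2}$ is a consequence of its universal property: a degenerating family in $\bar{H}_{d_1,d_2}$ picks up a common factor of degree $d_1-i$ precisely along $W_i$, and the resulting multigraded Hilbert function matches that of an element of $H_{d_1 - i, d_2 + i}$, yielding the stratification of statement (iii).

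The hard part will be the explicit construction of each intermediate flip: one must verify that the normal bundle along the strict transform of $W_i$ in $\bar{H}^{(i-1)}$ has the requisite split structure for a symmetric blow-up/blow-down, and, after performing the flip, check that the next extremal ray to contract is indeed the one corresponding to $W_{i+1}$. Once the full MMP has been performed, the Mori dream space property of statement (i) is automatic, the effective cone is generated by $\mathcal{O}(1,0)$ and the class of $\Delta$ (since $\Delta$ is the divisor contracted at the final step), and the description of statement (iii) follows from the inductive description of each $W_i^+$ and the compactness of the last model.
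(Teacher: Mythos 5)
Your base case is right: for $N=1$ the Grassmannian in Theorem \ref{th1}(ii) is a point, the exceptional locus of the first contraction is all of $W_1$, and for $d_1=2$ that contraction is already the divisorial contraction of $\Delta$. But the inductive step contains the entire difficulty of the theorem and is not carried out. Saying you ``would construct the flip by computing the normal bundle \dots\ and performing a symmetric weighted blow-up and blow-down'' defers exactly the point at issue: the existence of these flips is equivalent to the Mori dream space assertion you are trying to prove, so it cannot be assumed or postponed as ``the hard part.'' Moreover your description of the flipping loci is incorrect for $i\geq 2$: after the first $i-1$ flips the strict transform of $W_i$ fibers over $\bar{H}_{d_1-i}$ with fibers isomorphic (up to normalization) not to $\bar{H}_{i,d_2-d_1+i}$ but to the \emph{last model} of the MMP for $\bar{H}_{i,d_2-d_1+i}$ --- a compactification of $H_{i,d_2-d_1+i}$ with codimension $2$ boundary, not a projective space --- so the ``symmetric blow-up/blow-down with projective-space fibers'' picture only matches the first flip. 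The paper's actual mechanism, which your proposal has no substitute for, is to build a single common resolution $\hat{H}_{d_1,d_2}$ by iteratively blowing up the strict transforms of the $W_k$, and then to write down explicit base-point-free linear systems $\hat{\Lambda}_i$ on it, produced by a formal (twisted) Euclidean algorithm applied to the universal pair $(f,g)$; all intermediate models arise at once as images of the resulting morphisms, and one checks their nef cones are spanned by semi-ample classes and cover the movable cone. Theorem \ref{th1} enters to identify which curves the first of these morphisms contracts.

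Two further points. Your proposed ``unified'' route via torus GIT is explicitly flagged by the author as unknown: the natural group acting on $H^0(\mathcal{O}(d_1))\oplus H^0(\mathcal{O}(d_2))$ whose quotient gives the Cox ring is the \emph{additive} group $H^0(\mathcal{O}(d_2-d_1))$, not a torus, and no candidate presentation as a toric-type VGIT is known; so this cannot be invoked as an alternative. Finally, you misidentify the multigraded Hilbert scheme: $\hat{H}_{d_1,d_2}$ is the common resolution dominating all the models (an iterated blow-up of $\bar{H}_{d_1,d_2}$, of Picard rank $d_1$), not the last model of the MMP; the last model $\tilde{H}^{[d_1-2]}_{d_1,d_2}$ is a contraction of it, and the stratification of (iii) is read off from the saturated strata of that contraction, not from a universal property of the Hilbert scheme.
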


The strategy is to show that there is a morphism $\hat{H}_{d_1,d_2}\to\bar{H}_{d_1,d_2}$,
and to give an explicit description of it as a sequence of blow-ups.
Then we construct explicit base-point free linear systems on $\hat{H}_{d_1,d_2}$
that induce birational models of $\bar{H}_{d_1,d_2}$. These birational models turn out to
realize the MMP for $\bar{H}_{d_1,d_2}$. It is worth noting that we use Theorem \ref{th1} in an essential way in the proof.
As an aside of this method, we obtain results about $\hat{H}_{d_1,d_2}$ itself:

\begin{prop}\label{thnef}
The multigraded Hilbert scheme $\hat{H}_{d_1,d_2}$ is smooth of Picard rank $d_1$.
Its nef cone is simplicial and consists exclusively of semi-ample line bundles.
\end{prop}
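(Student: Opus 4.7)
The plan is to derive this proposition as a byproduct of the birational description of $\hat{H}_{d_1,d_2}$ developed in the proof of Theorem \ref{th3}.

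For smoothness and the Picard rank, I would rely on the explicit morphism $\hat{H}_{d_1,d_2}\to\bar{H}_{d_1,d_2}$ that the paper realizes as an iterated blow-up along smooth centers. The base $\bar{H}_{d_1,d_2}$ is a projective bundle over a projective space, hence smooth of Picard rank $2$; each blow-up along a smooth center preserves smoothness and raises the Picard rank by one. It then suffices to verify that the number of blow-ups is $d_1-2$, one for each flipping locus $W_i$ of Theorem \ref{th3}, to obtain smoothness and Picard rank $2+(d_1-2)=d_1$.

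For the nef cone, I would use the explicit base-point free linear systems on $\hat{H}_{d_1,d_2}$ built in the proof of Theorem \ref{th3}, which induce every birational model of $\bar{H}_{d_1,d_2}$ appearing in its MMP: the initial variety, the $d_1-2$ flipped intermediate models, and the final model. This produces $d_1$ morphisms out of $\hat{H}_{d_1,d_2}$, and hence $d_1$ semi-ample classes $L_0,\dots,L_{d_1-1}$ lying on the boundary of $\Nef(\hat{H}_{d_1,d_2})$. Using intersection numbers against the curves contracted at each MMP step --- including the fibers of the blow-ups on the $\hat{H}$-side --- I would check that the $L_i$ are pairwise non-proportional in $\Pic(\hat{H}_{d_1,d_2})_{\mathbb{Q}}$. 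A $d_1$-dimensional cone that contains $d_1$ distinct boundary rays must be simplicial with these rays as extremal, which simultaneously proves simpliciality and provides a generating set of semi-ample classes.

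Semi-ampleness of an arbitrary nef class then follows formally, since the semi-ample cone is stable under non-negative rational combinations: if $mL$ and $nM$ are globally generated then $mn(L+M)$ is generated by products of sections. The $\mathbb{Q}_{\geq 0}$-cone spanned by $L_0,\dots,L_{d_1-1}$ equals the whole nef cone by the previous paragraph, so every nef class is semi-ample. The main obstacle is bookkeeping: one needs a sufficiently concrete description of the blow-up centers, of the strict transforms of the various $W_i$, and of the pullbacks of $\mathcal{O}(l_1,l_2)$ and of the successive exceptional divisors, to compute the intersection numbers certifying both that the blow-up sequence involves exactly $d_1-2$ steps and that the $L_i$ fill out exactly $d_1$ distinct rays --- precisely the data that the proof of Theorem \ref{th3} is designed to supply.
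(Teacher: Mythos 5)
Your treatment of smoothness and of the Picard rank follows the paper's route (iterated blow-ups along smooth centers, of which exactly $d_1-2$ actually increase the Picard rank, the last center being a divisor), and your final step -- that a nef cone generated by semi-ample classes consists of semi-ample classes -- is fine. But the core of the nef-cone argument has a genuine gap, on two counts. First, the classes you propose are largely not extremal. The pullback to $\hat{H}_{d_1,d_2}$ of an ample class from an intermediate model $\bar{H}^{[i]}_{d_1,d_2}$ (which has Picard rank $2$) sits in the relative interior of a two-dimensional face of $\Nef(\hat{H}_{d_1,d_2})$, not on a ray; the same goes for the initial variety $\bar{H}_{d_1,d_2}$. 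The extremal rays are the classes $\mathcal{L}_{-1},\dots,\mathcal{L}_{d_1-2}$ inducing the contractions to the Picard-rank-one targets: $\bar{H}_{d_1}$, the targets $\tilde{H}^{[i]}_{d_1,d_2}$ of the small contractions $c_i$ (the ``midpoints'' of the flips), and the final model. So the list of $d_1$ morphisms you start from is the wrong one.

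Second, and more fundamentally, the convex-geometry step fails: it is not true that a full-dimensional cone in a rank-$d_1$ lattice containing $d_1$ pairwise non-proportional boundary classes must be simplicial with those classes as extremal rays -- a circular cone in $\mathbb{R}^3$ contains three pairwise non-proportional boundary rays, and a cone over a square contains four extremal rays of which any three span a strictly smaller simplicial cone. Pairwise non-proportionality gives essentially nothing here. What is actually needed, and what the paper does, is: (a) show that the $d_1$ semi-ample classes $(\mathcal{L}_i)_{-1\le i\le d_1-2}$ form a \emph{basis} of $\Pic(\hat{H}_{d_1,d_2})_{\mathbb{Q}}$, so that the cone they span is a full-dimensional simplicial subcone of the nef cone; and (b) produce, for each $i$, an effective curve $C_i$ with $C_i\cdot\mathcal{L}_j=0$ if and only if $j\neq i$. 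This dual system of curves imposes exactly the inequalities cutting the nef cone down to that simplicial span; constructing the $C_i$ is a nontrivial induction on $d_1$ exploiting the product structure $E_k\simeq\hat{H}_{d_1-k,d_2+k}\times\hat{H}_{k,d_2-d_1+k}$ of the exceptional divisors and the restriction formulas for the $\mathcal{L}_i$. Without step (b) your argument does not exclude that $\Nef(\hat{H}_{d_1,d_2})$ is strictly larger than the span of your classes. (A minor further point: since the statement concerns the multigraded Hilbert scheme, one also needs the separate identification of the blow-up with $\Hilb_{d_1,d_2}$, which your sketch does not touch.)
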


As a consequence of Theorem \ref{th3}, we prove the following particular case of Question \ref{q2}.
We do not know how to construct directly such curves in general (however, see Remark \ref{explicurve}).
Let us insist on the very down-to-earth content of Corollary \ref{thcc}:
it means that it is possible to
find a one-parameter algebraic family of couples $[F,G]$ of
polynomials of degrees $d_1$ and $d_2$, such that $F$ and $G$
do not acquire a common root under any degeneration.

\begin{cor}\label{thcc}
The Hilbert scheme $H_{d_1,d_2}$ of punctual complete intersections contains complete curves.
\end{cor}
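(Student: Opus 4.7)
The plan is to invoke the general principle stated in the proposition of the introduction: positive answers to Questions \ref{qMDS} and \ref{qEff} produce complete curves in $H_{d_1,d_2}$. Theorem \ref{th3}(i) supplies both in the case $N=1$, so the corollary reduces to running the second half of that proof in the present setting.

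More precisely, I would denote by $X$ the last model of the MMP for $\bar{H}_{d_1,d_2}$ described in Theorem \ref{th3}(ii)--(iii). That theorem says $X$ is reached from $\bar{H}_{d_1,d_2}$ by flipping the loci $W_1,\dots,W_{d_1-2}$, each of codimension $\geq 2$, and then divisorially contracting $W_{d_1-1}=\Delta$. Since $\bar{H}_{d_1,d_2}$ is a projective bundle over a projective space, hence of Picard rank $2$, and the MMP traverses the extremal ray opposite to $\bar{H}_{d_1,d_2}\to\bar{H}_{d_1}$, the fact that the final step is a divisorial contraction forces $X$ to be projective of Picard rank $1$. By Theorem \ref{th3}(iii), the boundary $Z:=X\setminus H_{d_1,d_2}$ — which contains both the image of $\Delta$ and the flipped loci — has codimension $\geq 2$ in $X$.

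The remaining step is a standard Bertini argument. I would pick an ample line bundle on $X$; after rescaling it is very ample, since $\Pic(X)$ has rank $1$. Intersecting $X$ with $\dim X-1$ sufficiently general hyperplane sections produces a complete curve $C\subset X$, and iterated Bertini applied to the codimension $\geq 2$ subvariety $Z$ yields $C\cap Z=\emptyset$. Hence $C\subset H_{d_1,d_2}$ is the desired complete curve.

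The substantive work has already been done in Theorem \ref{th3}; the only residual checks are that the final model has Picard rank $1$ and boundary of codimension $\geq 2$, both of which are immediate from that theorem. Accordingly, the main obstacle is not in this corollary itself but in the machinery of Theorem \ref{th3} that the proof invokes as a black box.
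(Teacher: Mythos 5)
Your proposal is correct and is essentially the paper's own argument: the paper proves this (as Proposition \ref{thccbis}) by observing that the last model $\tilde{H}^{[d_1-2]}_{d_1,d_2}$ of the MMP is, by Proposition \ref{strates}, a projective compactification of $H_{d_1,d_2}$ with codimension $2$ boundary, and then taking general hyperplane sections to produce a complete curve avoiding that boundary. Your additional remarks about Picard rank $1$ and very ampleness are harmless but not needed; projectivity of the last model plus the codimension bound on the boundary already suffice for the Bertini step.
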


\subsection{Other complete intersections}

Let us now comment on Question \ref{qMDS} in the cases that are not covered by Theorems \ref{th2} and \ref{th3}.
On the one hand, when $d_1=1$, the construction of the MMP for $\bar{H}_{d_1,d_2}$ as a variation of
GIT does not give a very explicit description
of the intermediate models. On the other hand, when $N=1$, we have a concrete description of all intermediate models, but I do
not know how to realize the MMP for $\bar{H}_{d_1,d_2}$ as a variation of GIT. Thus, none of these strategies seem to apply in general.

The general results of \cite{BCHM} (Corollary 1.3.2), that show that a log Fano variety is a
Mori dream space do not apply here, but in extremely particular cases.
The reason for it is that the MMP we are trying to run here is the traditional MMP backwards:
it worsens the nefness of the canonical bundle instead of improving it. As a consequence, the last models of the MMP for
$\bar{H}_{d_1,d_2}$, if they exist, are closer to be log Fano than $\bar{H}_{d_1,d_2}$ is.

A last strategy would be to prove that $\bar{H}_{d_1,d_2}$ is a Mori dream space by showing that
its Cox ring is finitely generated (\cite{MDS} Proposition 2.9).
This Cox ring is very easy to describe. Let $X:=H^0(\mathbb{P}^N,\mathcal{O}(d_1))\oplus H^0(\mathbb{P}^N,\mathcal{O}(d_2))$
viewed as an affine variety and $\Gamma:= H^0(\mathbb{P}^N,\mathcal{O}(d_2-d_1))$ viewed as an additive group acting on $X$ by 
$H\cdot(F,G)=(F,G+HF)$. Then $\Cox(\bar{H}_{d_1,d_2})$ is identified with the invariant ring $H^0(X,\mathcal{O}(X))^{\Gamma}$
by the natural rational map $X\dashrightarrow \bar{H}_{d_1,d_2}$.
 This gives a reformulation of Question \ref{qMDS}, and an interpretation of Theorems
\ref{th2} and \ref{th3} in the framework of Hilbert's fourteenth problem. 

\vspace{1em}

We excluded from the discussion the case $d_1=d_2$ as it is trivial, and a little bit degenerate.
Indeed, the MMP for $\bar{H}_{d_1,d_1}$ is very simple: it consists of the fibration
$\bar{H}_{d_1,d_1}\to\mathbb{G}(2,H^0(\mathbb{P}^N,\mathcal{O}(d_1)))$,
and this fibration is induced by the line bundle $\mathcal{O}(\Delta)$.
In particular, Questions \ref{q2}, \ref{qMDS} and \ref{qEff}  have a positive answer.
However, in this case, the Hilbert scheme of smooth complete intersections is not
$H_{d_1,d_1}$, but the complement of $\Delta$ in $\mathbb{G}(2,H^0(\mathbb{P}^N,\mathcal{O}(d_1)))$.
It is affine and does not contain complete curves.

\vspace{1em}

We restricted to codimension $2$ complete intersections for an explicit compactification $\bar{H}_{d_1,d_2}$ of
the Hilbert scheme  of complete intersections to exist. Under the more general condition that the degrees of the complete
intersections satisfy $d_1<d_2=\dots=d_c$, this Hilbert scheme still admits an explicit compactification that is a grassmannian bundle
over a projective space (see \cite{Oolqp} 2.1), and Questions \ref{qMDS} and \ref{qEff} still make sense and are interesting.

However, when this condition is not met, there is not such a simple compactification, and I do not know of an analogous strategy to prove
the existence of complete curves in the Hilbert scheme of smooth complete intersections, even for codimension $3$ complete intersections.

\subsection{Related works and further motivations}
The study of the birational geo\-metry of moduli spaces has recently attracted a lot of interest, for instance through
the development of the Hassett-Keel program for the moduli spaces of curves
(see \cite{HH1}, \cite{HH2}). This paper fits in this general framework.

In the particular case where $N=3$, $d_1=2$ and $d_2=3$, 
Questions \ref{qMDS} and \ref{qEff} were first asked by Casalaina-Martin,
Jensen and Laza with a motivation different from the one provided by Question \ref{q2}.
In \cite{Lazaetcie}, \cite{Lazaetcie2}, the authors are interested
in the Hassett-Keel program in genus $4$, that is in the construction of birational models
of $\bar{M}_4$ that have a modular interpretation. They construct many such birational models of $\bar{M}_4$
as GIT quotients of $\bar{H}_{2,3}$ by $PGL_4$.
A major difficulty they encounter and overcome
is that they need to apply GIT with respect to non-ample line bundles. If Question \ref{qMDS} were
known to have a positive answer, a strategy
to avoid this difficulty could have been to apply GIT with respect to genuine ample line bundles
but on the birational models of $\bar{H}_{2,3}$ appearing in its MMP.

\vspace{1em}

Another motivation for Question \ref{q2} when $N=3$ and $d_1\geq2$
is that it would give a positive answer to the following question,
that appears for instance in \cite{HM} p.57:
\begin{quest}\label{qcompcurv}
Do there exist non-trivial complete families of smooth non-dege\-ne\-rate curves in $\mathbb{P}^3$?
\end{quest}
There are obviously complete families of smooth degenerate curves in $\mathbb{P}^3$ (for instance, families of lines,
see also Proposition \ref{compic1}).
It is also well-known that there exist non-isotrivial complete families
of abstract smooth curves of genus $\geq 3$ \cite{Oortcomplete}, and that there
exist complete families of smooth non-degenerate curves in $\mathbb{P}^4$ (\cite{ChangRan1} Example 2.3).
However, by a result of Chang and Ran (\cite{ChangRan2} Theorems 1 and 3), a complete subvariety of the
Hilbert scheme of smooth non-degenerate curves in $\mathbb{P}^3$
has dimension at most $1$.

\vspace{1em}

In \cite{ABCH}, Arcara, Bertram, Coskun and Huizenga study the birational geometry of the punctual Hilbert schemes $\Hilb_n$ of length $n$
subschemes of $\mathbb{P}^2$.
This is very related to the case $N=1$ of Question \ref{qMDS} and \ref{qEff} (i.e. to Theorem \ref{th3}) because,
in this case, $H_{d_1,d_2}$ is a locally closed subscheme of $\Hilb_{d_1d_2}$.
Let us describe the similarities and differences between these two situations.

Unlike the varieties $\bar{H}_{d_1,d_2}$, $\Hilb_n$ is always log Fano (\cite{ABCH} Theorem 2.5).
This immediately implies that it is a Mori dream space by \cite{BCHM}, answering
the analogue of Question \ref{qMDS} for $\Hilb_n$.
Since $\Hilb_n$ is of Picard rank $2$, it is possible to run its MMP in two directions.
One of these is trivial: we get a contraction, the Hilbert-Chow morphism. As for $\bar{H}_{d_1,d_2}$, it is the other
one that is interesting to describe.

The analogue of Question \ref{qEff} is much more
complicated in the case of $\Hilb_n$.
Indeed, the non-trivial boundary of $\Eff(\Hilb_n)$ is difficult to describe: it depends on $n$ in a complicated and
interesting fashion (\cite{H1}, \cite{H2} Theorem 1.4 and Table 1).

Here is another difference between $\bar{H}_{d_1,d_2}$ and $\Hilb_n$.
The trivial contraction of $\bar{H}_{d_1,d_2}$ is the fibration over $\bar{H}_{d_1}$,
that associates to $\{F=G=0\}\in H_{d_1,d_2}$
its degree $d_1$ equation $F$, and the non-trivial contraction that starts the MMP for $\bar{H}_{d_1,d_2}$
is closely related to a Hilbert-Chow morphism (see for instance the proof Lemma \ref{lem1}).
On the contrary, the trivial contraction of $\Hilb_n$ is the Hilbert-Chow morphism
and the non-trivial contraction that starts the MMP for $\Hilb_n$
is given by considering the degree $n-1$ equations of a length $n$ subscheme
(\cite{ABCH} Proposition 3.1).
\bigskip

{\it Acknowledgements.} Part of this work was done during a stay at University of Utah, where I benefited from excellent working
conditions. Particular thanks to Tommaso de Fernex for many interesting discusssions and his warm hospitality.

\section{The first contraction}

In this section, we will describe the first contraction of the MMP for $\bar{H}_{d_1,d_2}$. 
Let us first recall why this contraction exists and is induced by a multiple of $\mathcal{O}(d_2-d_1+1,1)$.
It is essentially \cite{Oolqp} Th\'eor\`eme 2.7, but it is not explicitly stated there that the nef line bundle
$\mathcal{O}(d_2-d_1+1,1)$ is in fact base-point free,
and there are unneccessary additional hypotheses $N\geq 2$ and $d_1\geq 2$ in
this reference.

In all this section, a curve means an integral closed subscheme of dimension $1$.

\begin{prop}\label{cont1}
The line bundle $\mathcal{O}(d_2-d_1+1,1)$ on $\bar{H}_{d_1,d_2}$ is base-point free, but not ample.
\end{prop}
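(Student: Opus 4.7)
The plan is to exhibit a morphism $\varphi\colon \bar{H}_{d_1,d_2}\to Z$ to a projective variety $Z$ such that $\varphi^*\mathcal{O}_Z(1)\simeq \mathcal{O}(d_2-d_1+1,1)^{\otimes k}$ for some $k\ge 1$. Base-point freeness of $\mathcal{O}(d_2-d_1+1,1)$ will then follow at once from that of $\mathcal{O}_Z(1)$, and non-ampleness will follow because $\varphi$ necessarily has positive-dimensional fibers---equivalently, because by \cite[Th\'eor\`eme~2.7]{Oolqp} the class $\mathcal{O}(d_2-d_1+1,1)$ lies on an extremal ray of the nef cone of $\bar{H}_{d_1,d_2}$. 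The curves of Theorem~\ref{th1}(ii) serve as explicit witnesses to non-ampleness.

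A particularly clean construction is available in the case $N=1$, which is in any event the only case needed for Section~3. Writing $W:=H^0(\mathbb{P}^1,\mathcal{O}(d_2-d_1))$ (of dimension precisely $d_2-d_1+1$) and $V_{d_2}:=H^0(\mathbb{P}^1,\mathcal{O}(d_2))$, consider
\[
  \varphi\colon \bar{H}_{d_1,d_2}\to \mathbb{G}(d_2-d_1+2,\,V_{d_2}),\qquad [F,G]\mapsto FW+\mathbbm{k} G.
\]
This is a morphism: multiplication by $F\ne 0$ embeds $W$ as a $(d_2-d_1+1)$-dimensional subspace of $V_{d_2}$, and the condition $G\notin FW$ is built into the very definition of a point of $\bar{H}_{d_1,d_2}$, so $FW+\mathbbm{k} G$ always has the expected dimension $d_2-d_1+2$. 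Identifying the pullback of the tautological subbundle with the image of $W\otimes\mathcal{O}(-1,0)\oplus \mathcal{O}(0,-1)\hookrightarrow V_{d_2}\otimes\mathcal{O}$ and taking determinants, a direct computation yields $\varphi^*\mathcal{O}_{\mathbb{G}}(1)=\mathcal{O}(d_2-d_1+1,1)$, proving base-point freeness.

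For $N\ge 2$, the same formula still defines a morphism, but $\dim W=\binom{N+d_2-d_1}{N}$ is now strictly larger than $d_2-d_1+1$, and the pullback is the strictly more positive line bundle $\mathcal{O}(\dim W,1)$ rather than $\mathcal{O}(d_2-d_1+1,1)$. The main obstacle, addressed by \cite[Th\'eor\`eme~2.7]{Oolqp}, is to refine the target so that the pullback becomes $\mathcal{O}(d_2-d_1+1,1)$ itself: one replaces the Grassmannian by a variety built more intrinsically from the codimension $2$ subscheme $V(F,G)\subset\mathbb{P}^N$, and verifies that the resulting map extends across the discriminant locus where $F$ and $G$ share a common factor. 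The additional point to spell out here, beyond what is in \cite{Oolqp}, is that the construction carries through without the superfluous hypotheses $N\ge 2$ and $d_1\ge 2$ imposed there, in particular covering the degenerate cases $N=1$ and $d_1=1$.
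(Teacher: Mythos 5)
Your construction for $N=1$ is correct and is a genuinely different (and clean) route for that case: the subspace $FW+\mathbbm{k}G$ is the fibre of the rank $d_2-d_1+2$ subbundle of $V_{d_2}\otimes\mathcal{O}$ obtained as the preimage of $\mathcal{O}(0,-1)\subset\pi^*\mathcal{E}$, the determinant computation gives $\varphi^*\mathcal{O}_{\mathbb{G}}(1)=\mathcal{O}(d_2-d_1+1,1)$ with the paper's normalization (it even reproduces the Convention \ref{conv11} triviality when $N=d_1=1$), and one checks directly that $\varphi$ contracts the curves $L\mapsto[PL,P\Lambda^{d_2-d_1+1}]$, so non-ampleness does not need to be borrowed from Theorem \ref{th1} (which would be circular, since Theorem \ref{th1} is proved \emph{using} this proposition). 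By contrast the paper gets base-point freeness from explicit sections: the coefficients $\sigma_M$ of the remainder in the formal Euclidean division \eqref{euclide}, shown to descend along $\bar{H}_{d_1}\times\bar{H}_{d_2}\dashrightarrow\bar{H}_{d_1,d_2}$ and to have no common zero. These sections are not a disposable device: they are reused verbatim in Lemma \ref{lem4} and again in Section \ref{part3} (the system $\Lambda_0$), so even where your morphism suffices to prove the \emph{statement}, the paper's later arguments still need the explicit linear system.

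The genuine gap is the case $N\geq 2$. The proposition is stated, and used, for all $N$: it underlies the whole of Section 1 and Theorem \ref{th1}. Your Grassmannian map there pulls back $\mathcal{O}_{\mathbb{G}}(1)$ to $\mathcal{O}(\binom{N+d_2-d_1}{N},1)$, which is strictly more positive than $\mathcal{O}(d_2-d_1+1,1)$; global generation of a bundle $\mathcal{O}(a,1)$ with $a>d_2-d_1+1$ says nothing about global generation of $\mathcal{O}(d_2-d_1+1,1)$. The proposed fix --- ``replace the Grassmannian by a variety built more intrinsically from $V(F,G)$ and verify the map extends across the discriminant'' --- is precisely the content that has to be proved, and it is the hard part: the delicate locus is exactly where $F$ and $G$ acquire a common factor and $V(F,G)$ is no longer of codimension $2$, so Chow-form or Hilbert--Chow type constructions do not obviously extend. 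Nor can this be delegated to \cite{Oolqp}: as the paper is at pains to say, Th\'eor\`eme 2.7 there only establishes \emph{nefness} of $\mathcal{O}(d_2-d_1+1,1)$, not base-point freeness, and does so under the additional hypotheses $N\geq2$, $d_1\geq2$. So for $N\geq 2$ your argument reduces to an assertion that a stronger statement than the one in the literature can be extracted from it, without supplying the construction. The Euclidean-division sections of the paper are exactly the device that produces a base-point free subsystem of $|\mathcal{O}(d_2-d_1+1,1)|$ uniformly in $N$ and $d_1$, and some substitute for them is needed before the proposal can be accepted beyond $N=1$.
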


\begin{proof}
Choose a coordinate system $(X_0,\dots, X_N)$ of $\mathbb{P}^N$, and let $\mathfrak{M}_d$ be the set of monomials of degree
$d$ in the $X_s$.
Let $f^{(M)}$ (resp. $g^{(M)}$) be indeterminates indexed by $\mathfrak{M}_{d_1}$ (resp. $\mathfrak{M}_{d_2}$) and
let us work in the ring $A=\mathbbm{k}[X_s,f^{(M)}, g^{(M)}]$ trigraded by the total degree in the $X_s$, the $f^{(M)}$ and the $g^{(M)}$. 
Set $f=\sum_{M\in\mathfrak{M}_{d_1}} f^{(M)} M$ and $g=\sum_{M\in\mathfrak{M}_{d_2}}g^{(M)} M$. By \cite{Oolqp} Lemme 2.6
(i.e. by formally carrying out the euclidean division of $g$ by $f$),
there exist $q,r\in A$ homogeneous of degrees $(d_2-d_1,d_2-d_1,1)$ and
$(d_2,d_2-d_1+1,1)$ such that no monomial of $r$ is divisible by $X_0^{d_1}$ and such that:
\begin{equation}\label{euclide}
(f^{(X_0^{d_1})})^{d_2-d_1+1}g=qf+r.
\end{equation}
If $M\in\mathfrak{M}_{d_2}$, the coefficient of $M$
in $r$ is homogeneous of degree $d_2-d_1+1$ in the $f^{(M)}$ and $1$ in the $g^{(M)}$:
it induces a section in $\sigma_M\in H^0(\bar{H}_{d_1}\times\bar{H}_{d_2},\mathcal{O}(d_2-d_1+1,1))$.
Now, let $K\in H^0(\mathbb{P}^N,\mathcal{O}(d_2-d_1))$. 
Substituting the coefficients of $g+Kf$ into the coefficients of $g$ in (\ref{euclide}),
we get an identity of the form $(f^{(X_0^{d_1})})^{d_2-d_1+1}(g+Kf)=q'f+r'$.
Substracting (\ref{euclide}), we obtain $((f^{(X_0^{d_1})})^{d_2-d_1+1}K+q-q')f=r'-r$.
Since no monomial of the right-hand side is divisible by $X_0^{d_1}$, it must vanish.
This shows that if $\sigma_M$ vanishes on $(F,G)$, it also vanishes on $(F,G+KF)$:
this means that $\sigma_M$ comes from $H^0(\bar{H}_{d_1,d_2},\mathcal{O}(d_2-d_1+1,1))$ via the rational map
$\bar{H}_{d_1}\times\bar{H}_{d_2}\dashrightarrow\bar{H}_{d_1,d_2}$. 

Consider the linear system generated by the $\sigma_M$ for different
choices of coordinate systems and monomials $M$,
and let us prove that it has no base-point on $\bar{H}_{d_1,d_2}$.
If $[F,G]\in\bar{H}_{d_1,d_2}$, choose a coordinate system so that $X_0^{d_1}$
has coefficient $1$ in $F$, and substitute the coefficients of $F$ and $G$ in the
$f^{(M)}$ and $g^{(M)}$ in (\ref{euclide}) to get an identity of the form $G=QF+R$.
Since $G$ is not a multiple of $F$, there is a monomial $M$ in $R$ having non-zero
coefficient. This means exactly that $\sigma_M$ doesn't vanish on $[F,G]$.

Finally, $\mathcal{O}(d_2-d_1+1,1)$ is not ample because it has degree $0$ on some curves (\cite{Oolqp} Proposition 2.8 Etape 1).
\end{proof}

Let us denote by $c$ the contraction induced by a sufficiently large multiple of $\mathcal{O}(d_2-d_1+1,1)$, and by $\bar{c}$ the morphism
induced by the base-point free linear system used in the proof of Proposition \ref{cont1}.
Of course, $c$ and $\bar{c}$ contract the same curves: those that have intersection $0$ with $\mathcal{O}(d_2-d_1+1,1)$.
The goal of this section is to prove Theorem \ref{th1}, that describes $c$. Let us recall its statement:

\begin{thm}[Theorem \ref{th1}]\label{th1'}
\hspace{1em}

\begin{enumerate}
\item[(i)] The locus contracted by $c$ is the image of $i:\bar{H}_{d_1-1}\times\bar{H}_{1,1}\to \bar{H}_{d_1,d_2}$,
where the inclusion is given by
$i(P,[L,\Lambda])=[PL,P\Lambda^{d_2-d_1+1}]$.
\item[(ii)]
The curves contracted by $c$ are exactly those in the fibers of the morphism
$\pi:\bar{H}_{d_1-1}\times\bar{H}_{1,1}\to\bar{H}_{d_1-1}\times\mathbb{G}(2, H^0(\mathbb{P}^N,\mathcal{O}(1)))$ given by
$\pi(P,[L,\Lambda])=(P,\langle L,\Lambda\rangle)$.
\end{enumerate}
\end{thm}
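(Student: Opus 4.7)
The plan is to verify in three steps that (i) and (ii) hold: first, that $i$ is well-defined; second, that the image under $i$ of each fiber of $\pi$ is contracted by $c$; and third, that no other curves are contracted.

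Well-definedness of $i$ reduces to a binomial expansion: if $\Lambda$ is replaced by $\Lambda+aL$, then $(\Lambda+aL)^{d_2-d_1+1}=\Lambda^{d_2-d_1+1}+L\cdot Q$ for some $Q\in H^0(\mathcal{O}(d_2-d_1))$, so $P(\Lambda+aL)^{d_2-d_1+1}\equiv P\Lambda^{d_2-d_1+1}\pmod{PL}$, and the class $[PL,P\Lambda^{d_2-d_1+1}]$ depends only on $(P,[L,\Lambda])$.

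For the second step, fix $(P,V)\in\bar{H}_{d_1-1}\times\mathbb{G}(2,V_0)$ and let $\gamma$ be the image under $i$ of the fiber $\{P\}\times\mathbb{P}(V)$ of $\pi$. The composition of $\gamma$ with the projection to $\bar{H}_{d_1}$ is the linear embedding $L'\mapsto PL'$, so $\gamma\cdot\mathcal{O}(1,0)=1$. To compute $\gamma\cdot\mathcal{O}(0,1)$, I view $\gamma$ as a section of the pulled-back projective bundle over $\mathbb{P}^1\cong\mathbb{P}(V)$, and identify the tautological line subbundle of $E|_{\mathbb{P}^1}$ as follows. Fixing any $L\in V$, the constant family $PL^{d_2-d_1+1}\in H^0(\mathcal{O}(d_2))$ descends to a section of $E|_{\mathbb{P}^1}$ which at each $L'\neq L$ spans the tautological line of $\gamma$ (by taking $L$ itself as a lift of the class of $\Lambda'$ in $V/\langle L'\rangle$), but vanishes at $L'=L$; the local expansion
\[
PL^{d_2-d_1+1}\equiv (-\epsilon)^{d_2-d_1+1}P\Lambda^{d_2-d_1+1}\pmod{P(L+\epsilon\Lambda)}
\]
shows the order of vanishing is exactly $d_2-d_1+1$. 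Hence the tautological subbundle is $\mathcal{O}_{\mathbb{P}^1}(d_2-d_1+1)$, giving $\gamma\cdot\mathcal{O}(0,1)=-(d_2-d_1+1)$ and therefore $\gamma\cdot\mathcal{O}(d_2-d_1+1,1)=0$.

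For the third step, let $\gamma$ be any contracted curve. Since $\mathcal{O}(d_2-d_1+1,1)$ restricts to an ample bundle on each fiber of $\bar{H}_{d_1,d_2}\to\bar{H}_{d_1}$, $\gamma$ projects non-trivially to $\bar{H}_{d_1}$. The geometric input I would use, and the main obstacle, is that both the gcd $P=\gcd(F,G)$ and the codimension $2$ cycle $Z$ of $\{F=G=0\}$ (after discarding the divisor $\{P=0\}$) are constant along $\gamma$. Granting this, writing $F=PL$ and $G=PG'$ with $L$ of degree $e=d_1-\deg P$, a Bezout argument forces $e=1$: two members $L_1,L_2$ of the resulting pencil of polynomials vanishing on $Z$ must share a factor by Bezout (otherwise $\deg Z\leq e^2$, contradicting $\deg Z=e(d_2-\deg P)>e^2$ since $d_2>d_1$), and a descent then produces a pencil of linear forms. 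Thus $L$ varies in a pencil $\mathbb{P}(V)\subset\bar{H}_1$ with base locus $L_V$, the cycle $Z$ must be $(d_2-d_1+1)L_V$, and constancy of $Z$ as $L$ varies forces $G'\equiv c\Lambda^{d_2-d_1+1}\pmod L$ for $\Lambda\in V$ complementary to $L$. Hence $[F,G]=i(P,[L,\Lambda])$ and $\gamma$ lies in a fiber of $\pi$.

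The principal obstacle is thus establishing the constancy of the pair $(P,Z)$ along a contracted curve. I expect this to come from a concrete interpretation of $\bar{c}$ that makes it factor through a suitable refinement of the Hilbert-Chow morphism on $H^{\ic}$, or equivalently from the analysis of the sections $\sigma_M$ of Proposition \ref{cont1}: since $P\mid F$ and $P\mid G$ imply $P\mid R$ (where $R$ is the euclidean remainder), one can write $R=PR'$, and the coefficients of $R'$ should encode both $P$ and $Z$ via resultant-type relations.
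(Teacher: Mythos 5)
Your first two steps are fine: the well-definedness of $i$ is the same binomial computation the paper uses implicitly, and your intersection computation $\gamma\cdot\mathcal{O}(d_2-d_1+1,1)=0$ on a fiber of $\pi$ is correct (the paper quotes \cite{Oolqp} Proposition 2.8, \'Etape 1 for this, but your direct identification of the tautological subbundle as $\mathcal{O}_{\mathbb{P}^1}(d_2-d_1+1)$ works). The genuine gap is in the third step, which is where essentially all of the content of the theorem lies. You reduce the converse to the claim that along a contracted curve both $P=\gcd(F,G)$ and the residual cycle $Z$ are constant, and then explicitly defer the proof of that claim to an ``expectation'' about a refinement of the Hilbert--Chow morphism. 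That claim is not something you can extract cheaply from the sections $\sigma_M$: what those sections give is only Lemma \ref{lem4}, namely that for each $x$ there is a \emph{single} form $\Gamma_x$ with $\mult_x(\Gamma_x)\geq d_2-d_1+1$ lying in $\langle F,G\rangle$ for every point of the curve. Upgrading this to the constancy of a gcd and of a whole cycle is precisely the hard part; the paper needs for it an induction on $d_1$ via the multiplication maps $\varphi_k$, a reduction to $N=1$ by generic hyperplane sections, and the detailed polynomial analysis of Lemma \ref{lem5}. Moreover, Remark \ref{frob} shows your intended route is genuinely obstructed in positive characteristic: the curve $t\mapsto[(X+t)^p,X^{2p}]$ in $\bar{H}_{2,4}$ admits such a $\Gamma_x$ for every $x$ and yet is not of the predicted form, and excluding this kind of behaviour is what forces the degeneration argument with one-parameter subgroups and linearizations of Lemma \ref{lem3} (guaranteeing that $F$ generically has a reduced component). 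Your sketch does not see this obstruction.

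A secondary issue: even granting constancy of $(P,Z)$, the ``descent'' concluding your B\'ezout argument is incomplete. Knowing that any two members $L_1,L_2$ of the family of degree-$e$ forms share a common factor does not by itself produce a pencil of linear forms: a priori all the $L'$ could share a fixed factor of degree $e-1$ while only their residual linear parts vary, or (again in characteristic $p$) the forms could vary through $p$-th powers; turning this into ``$e=1$'' requires an actual induction, which is in effect what the chain of gcd arguments in Lemma \ref{lem5} carries out on $\mathbb{P}^1$. So the overall architecture of your step three points at the right statement, but the two key assertions it rests on --- constancy of $(P,Z)$ and the descent --- are exactly the theorem, not inputs to it.
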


The proof of Theorem \ref{th1'} will use relations between various $\bar{H}_{d_1,d_2}$: multiplication maps
$\varphi_k:\bar{H}_{d_1-k}\times \bar{H}_{k,d_2-d_1+k}\to \bar{H}_{d_1,d_2}$ defined by $\varphi_k(P,[L,H])=[PL,PH]$,
and hyperplane sections to relate $\bar{H}_{d_1,d_2}=\bar{H}^{(N)}_{d_1,d_2}$ and $\bar{H}^{(1)}_{d_1,d_2}$.
 
\begin{convention}\label{conv11}
This line of proof requires to take into account the case $N=d_1=1$. This case is degenerate, because the fibration
$\bar{H}_{1,d_2}\to\bar{H}_{1}\simeq \mathbb{P}^1$ is trivial, so that $\bar{H}_{1,d_2}\simeq \mathbb{P}^1$ has Picard rank $1$.
It will be however convenient to manipulate it as if it had Picard rank $2$. In particular,
given the definition of the line bundle $\mathcal{O}(0,1)$,
the line bundle $\mathcal{O}(l_1,l_2)$ really is another notation for the line bundle
$\mathcal{O}_{\mathbb{P}^1}(l_1-(d_2-d_1+1)l_2)$.
Moreover, in this case, the discriminant $\Delta$ is of course the empty divisor.
\end{convention} 

Before proving Theorem \ref{th1'} itself, we collect several lemmas. Lemmas \ref{lem1} and \ref{lem2} deal with the case $d_1=1$.
Lemma \ref{lem3} is technical, and is really needed only in finite characteristic (see Remark \ref{frob}). 
Lemmas \ref{lem4} and \ref{lem5} will allow a reduction to the case $N=1$.

\begin{lem}\label{lem1}
Theorem \ref{th1'} holds if $d_1=1$.
\end{lem}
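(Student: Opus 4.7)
The plan is to exploit the identification $\mathcal{E}=\Sym^{d_2}T_{\bar H_1}(-1)$, where $\bar H_{1,d_2}=\mathbb{P}(\mathcal{E})$ and $T_{\bar H_1}(-1)$ is the twisted tangent bundle of $\bar H_1=\mathbb{P}(V)$ with $V=H^0(\mathbb{P}^N,\mathcal{O}(1))$: the Euler sequence $0\to\mathcal{O}(-1)\to V\otimes\mathcal{O}\to T_{\bar H_1}(-1)\to 0$ has tautological sub with fibre $\langle L\rangle\subset V$ at $[L]$, so $\Sym^{d_2}T_{\bar H_1}(-1)_{[L]}=\Sym^{d_2}(V/\langle L\rangle)=\Sym^{d_2}V/L\cdot\Sym^{d_2-1}V=\mathcal{E}_{[L]}$. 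When $d_1=1$, the morphism $i$ simplifies to $[L,\Lambda]\mapsto[L,\Lambda^{d_2}]$ and is well-defined because $\Lambda^{d_2}\bmod L\cdot\Sym^{d_2-1}V$ only depends on $\Lambda\bmod\langle L\rangle$.

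I would first check that the image of $i$ is contracted. For a $2$-plane $V_2\subset V$, pulling back the Euler sequence to the line $\mathbb{P}(V_2)\subset\bar H_1$ gives $T_{\bar H_1}(-1)|_{\mathbb{P}(V_2)}\cong\mathcal{O}_{\mathbb{P}^1}(1)\oplus\mathcal{O}_{\mathbb{P}^1}^{N-1}$, so $\mathcal{E}|_{\mathbb{P}(V_2)}$ contains a canonical sub-line bundle of degree $d_2$, namely the image of $\Sym^{d_2}V_2\subset\Sym^{d_2}V$ modulo multiplication by the tautological $L$. The associated section $\sigma_{V_2}\colon\mathbb{P}(V_2)\to\bar H_{1,d_2}$ sends $[L]\in\mathbb{P}(V_2)$ to $[L,\Lambda^{d_2}]$ where $[\Lambda]\in\mathbb{P}(V_2/\langle L\rangle)$, so it coincides with $i$ restricted to the $\pi$-fibre over $V_2$. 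A degree computation gives $\sigma_{V_2}^*\mathcal{O}(1,0)=\mathcal{O}(1)$ and $\sigma_{V_2}^*\mathcal{O}(0,1)=\mathcal{O}(-d_2)$, so $\sigma_{V_2}(\mathbb{P}^1)$ has class $(1,-d_2)$, orthogonal to $\mathcal{O}(d_2,1)$, hence is contracted by $c$.

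For the converse, let $C$ be an irreducible contracted curve: its class must be $a(1,-d_2)$ for some $a\geq 1$ since the nef cone is spanned by $\mathcal{O}(1,0)$ and $\mathcal{O}(d_2,1)$. The normalisation $\tilde C\to C$ composes with the projection to $\bar H_1$ to give $\phi\colon\tilde C\to\bar H_1$ with $M:=\phi^*\mathcal{O}(1)$ of degree $a$, and $C$ corresponds to a sub-line bundle $\mathcal{L}\subset\mathcal{E}|_{\tilde C}$ of degree $d_2a$. The key step is to show $\phi$ factors through a line $\mathbb{P}(V_2)\subset\bar H_1$. Pulling back the Euler sequence to $\tilde C$, any sub-line bundle of $\phi^*T_{\bar H_1}(-1)$ of degree $m$ lifts to a rank-$2$ subbundle of the trivial bundle $V\otimes\mathcal{O}_{\tilde C}$ containing $M^{-1}$, of degree $m-a$. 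Since subbundles of a trivial bundle have nonpositive degree, $m\leq a$, with equality iff the lift is the trivial $V_2\otimes\mathcal{O}_{\tilde C}$ for some $V_2\subset V$ containing all $L_t$, equivalently iff $\phi(\tilde C)\subset\mathbb{P}(V_2)$.

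To transfer this bound to $\Sym^{d_2}$, in characteristic zero one uses that $\Sym^{d_2}W\hookrightarrow W^{\otimes d_2}$ by symmetrisation and that $\mu_{\max}(W^{\otimes d_2})=d_2\mu_{\max}(W)$ on a smooth curve (a consequence of tensor-product semistability à la Ramanan–Ramanathan), whence $\mu_{\max}(\Sym^{d_2}W)\leq d_2\mu_{\max}(W)$. Applied to $W=\phi^*T_{\bar H_1}(-1)$, the existence of $\mathcal{L}$ of degree $d_2a$ forces $\mu_{\max}(W)=a$, hence $\phi$ factors through a line $\mathbb{P}(V_2)$; the second paragraph then identifies $\mathcal{L}$ with the canonical degree-$d_2a$ sub-line bundle there, so $C=\sigma_{V_2}(\mathbb{P}(V_2))$ is the image of a $\pi$-fibre. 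This yields (ii), and (i) follows since the contracted locus is by definition the union of contracted curves. The main obstacle I foresee is handling positive characteristic, where symmetric powers are not direct summands of tensor powers and tensor-product semistability can fail; I expect that the technical Lemma \ref{lem3}, stated as needed only in finite characteristic, provides the required substitute for this slope propagation step.
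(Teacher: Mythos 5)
Your argument is a genuinely different route from the paper's: the paper does not touch the projective-bundle structure at all, but instead observes that for $d_1=1$ the whole of $\bar H_{1,d_2}$ is a Hilbert scheme of complete intersections, considers the Hilbert--Chow morphism $\Psi:\bar H_{1,d_2}\to\Chow(\mathbb{P}^N)$, shows by an elementary cycle-theoretic argument that $\Psi$ identifies exactly the pairs $[F,F'^{d_2}]$ and $[F',F^{d_2}]$ with $\langle F\rangle\neq\langle F'\rangle$, and concludes that $\Psi$ must coincide with $c$ because $\bar H_{1,d_2}$ has Picard rank $2$ and admits only two non-trivial contractions. That proof is short and works verbatim in any characteristic. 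Your identification $\bar H_{1,d_2}=\mathbb{P}(\Sym^{d_2}T_{\bar H_1}(-1))$, the computation of the class $(1,-d_2)$ of the curves $\sigma_{V_2}(\mathbb{P}(V_2))$, and the reduction of the converse to the slope bound on $\phi^*T_{\bar H_1}(-1)$ (via lifting subsheaves to the trivial bundle $V\otimes\mathcal{O}_{\tilde C}$ and using that subsheaves of trivial bundles have nonpositive degree, with the equality case forcing a constant $V_2$) are all correct, and in characteristic $0$ the argument goes through.

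However, there is a genuine gap exactly where you flag it, and the fix you propose does not exist. The inequality $\mu_{\max}(\Sym^{d_2}W)\leq d_2\,\mu_{\max}(W)$ rests on two facts that fail in characteristic $p\leq d_2$: the symmetrization splitting $\Sym^{d_2}W\hookrightarrow W^{\otimes d_2}$ requires $d_2!$ invertible ($\Sym^{d_2}W$ is only a quotient of $W^{\otimes d_2}$ in general, and $\mu_{\max}$ of a quotient is not controlled), and tensor-product semistability itself fails in positive characteristic (Frobenius-destabilized bundles; note that the $p$-th power map gives an embedding $F^*W\hookrightarrow\Sym^pW$, which is precisely the mechanism by which such inequalities break). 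Lemma \ref{lem3} cannot serve as a substitute: it is logically \emph{downstream} of Lemma \ref{lem1} (it relies on Lemma \ref{lem2}, which is proved by reducing to Lemma \ref{lem1}), and its content --- that a general $F$ in a component of the exceptional locus has a reduced irreducible component, proved by a one-parameter-subgroup degeneration --- addresses the hypothesis (ii) of Lemma \ref{lem5} in the inductive step for $d_1\geq 2$, not any slope statement about $\Sym^{d_2}$ of the tangent bundle. Since the paper works over an algebraically closed field of arbitrary characteristic and Lemma \ref{lem1} is the base case feeding into everything else, your proof as written only establishes the lemma in characteristic $0$ (or $>d_2$); to repair it you would either need a direct argument exploiting that $W$ is a quotient of a trivial bundle by a line bundle of negative degree, or you should simply switch to the Hilbert--Chow argument, which avoids vector-bundle slopes entirely. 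You also leave out the degenerate case $N=d_1=1$ of Convention \ref{conv11}, which the paper treats separately (there $\mathcal{O}(d_2-d_1+1,1)$ is trivial and $c$ is constant), though that is a minor omission.
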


\begin{proof}
Suppose first that $N\geq2$. Since $d_1=1$, if $[F,G]\in\bar{H}_{1,d_2}$, $F$ and $G$ cannot have a common factor,
so that the variety $\bar{H}_{1,d_2}=H^{\ic}_{1,d_2}$ really is the Hilbert scheme of complete intersections.
Consider the Hilbert-Chow morphism $\Psi:\bar{H}_{1,d_2}\to \Chow(\mathbb{P}^N)$. 

Let us describe the fibers of $\Psi$: choose
$[F,G]\neq [F',G']\in\bar{H}_{1,d_2}$ with the same underlying cycle $C$. If $C$ were included in only one hyperplane $H$ of $\mathbb{P}^N$,
$F$ and $F'$ would be equations of this hyperplane and would be proportional. Moreover, $G$ and $G'$ would both be equations of $C$ viewed as a
hypersurface of $H$ and would coincide up to a multiple of $F$. Thus $[F,G]=[F',G']$, which is absurd. This shows that $C$ is included in two different hyperplanes
of $\mathbb{P}^N$, so that $C$ is necessarily a codimension $2$ linear subspace with multiplicity $d_2$. Moreover the argument above also implies
that $F$ and $F'$ cannot be proportional, so that $C=d_2[\{F=F'=0\}]$. As a consequence, $[F,G]=[F,F'^{d_2}]$ and $[F',G']=[F',F^{d_2}]$.

Thus, $\Psi$ is a non-trivial contraction. Since $\bar{H}_{1,d_2}$ has Picard rank
$2$, it is the total space of exactly two non-trivial contractions:
the fibration induced by $\mathcal{O}(1,0)$, and the morphism $c$.
It follows that $\Psi=c$. The description we've just given of its
contracted locus and of its fibers is exactly
the one claimed in Theorem \ref{th1'}.

On the other hand, if $N=1$, in view of convention \ref{conv11}, $\mathcal{O}(d_2-d_1+1,1)$ is the trivial line bundle.
It thus induces the constant morphism to a point.
This is what Theorem \ref{th1'} predicts.
\end{proof}

\begin{lem}\label{lem2}
The curves contracted by $c\circ\varphi_1$ are exactly those included in the image of $i$ that are
contracted by $\pi$.
\end{lem}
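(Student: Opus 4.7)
My plan is to pull back the line bundle $\mathcal{O}(d_2-d_1+1,1)$ along $\varphi_1$, decompose the result as an external tensor product of a strictly positive class on $\bar{H}_{d_1-1}$ and the first-contraction class on $\bar{H}_{1,d_2-d_1+1}$, and then invoke Lemma~\ref{lem1} on the second factor.

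The key step is to establish the formula
\[
\varphi_1^*\mathcal{O}(d_2-d_1+1,1)=\mathcal{O}_{\bar{H}_{d_1-1}}(d_2-d_1+2)\boxtimes\mathcal{O}_{\bar{H}_{1,d_2-d_1+1}}(d_2-d_1+1,1).
\]
This is where the real work lies. One route is to pull back the explicit sections $\sigma_M$ built in the proof of Proposition~\ref{cont1}: substituting $F=PL$ and $G=PH$ into $\sigma_M$, which has bidegree $(d_2-d_1+1,1)$ in the coefficients of $F$ and $G$, one finds that the result has tridegree $(d_2-d_1+2,\,d_2-d_1+1,\,1)$ in the coefficients of $P,L,H$; the extra factor on the $P$ side comes from $(F^{(X_0^{d_1})})^{d_2-d_1+1}=(P^{(X_0^{d_1-1})})^{d_2-d_1+1}(L^{(X_0)})^{d_2-d_1+1}$ appearing in the euclidean division identity. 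Equivalently, one checks separately that $\varphi_1^*\mathcal{O}(1,0)$ has class $(1,1,0)$ (via the bidegree of the multiplication map $(P,L)\mapsto PL$) and that $\varphi_1^*\mathcal{O}(0,1)$ has class $(1,0,1)$ (multiplication by $P$ embeds each fiber of $\bar{H}_{1,d_2-d_1+1}\to\bar{H}_1$ linearly into the corresponding fiber of $\bar{H}_{d_1,d_2}\to\bar{H}_{d_1}$, while scaling with $P$).

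With the formula in hand, the conclusion is straightforward. Let $C$ be an integral curve in $\bar{H}_{d_1-1}\times\bar{H}_{1,d_2-d_1+1}$. Both tensor factors on the right-hand side are base-point free: the first because $d_2-d_1+2\geq 2$, the second by Proposition~\ref{cont1} applied to $\bar{H}_{1,d_2-d_1+1}$. Hence $\deg_C\varphi_1^*\mathcal{O}(d_2-d_1+1,1)=0$ forces each summand to vanish, which is to say that $C$ projects to a single point $P_0\in\bar{H}_{d_1-1}$ and that its image in $\bar{H}_{1,d_2-d_1+1}$ is contracted by the first contraction of that space. Lemma~\ref{lem1} applied to $\bar{H}_{1,d_2-d_1+1}$ (whose required hypothesis $d_1=1$ is met) then identifies this image with a curve contained in the embedded copy of $\bar{H}_{1,1}$ given by $[L,\Lambda]\mapsto[L,\Lambda^{d_2-d_1+1}]$ and lying in a fiber of $[L,\Lambda]\mapsto\langle L,\Lambda\rangle$. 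Reintroducing the constant factor $P_0$ and recognising that $i$ factors through $\varphi_1$ via $(P,[L,\Lambda])\mapsto(P,[L,\Lambda^{d_2-d_1+1}])$, this recovers exactly the curves in the image of $i$ that are contracted by $\pi$, and the same chain of equivalences yields the reverse implication.
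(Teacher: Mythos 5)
Your proof is correct and follows essentially the same route as the paper: both compute $\varphi_1^*\mathcal{O}(d_2-d_1+1,1)=\mathcal{O}(d_2-d_1+2;d_2-d_1+1,1)$, observe that this semi-ample class induces the contraction $(\Id,c)$ on the product, and then invoke Lemma \ref{lem1}. You merely supply the details of the pullback computation and of the "degree zero on a nef sum forces degree zero on each summand" step that the paper leaves as "easily seen".
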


\begin{proof}
These curves are exactly the curves in $\bar{H}_{d_1-1}\times\bar{H}_{1,d_2-d_1+1}$ that are contracted by
the contraction induced by the semi-ample line bundle $\varphi_1^*\mathcal{O}(d_2-d_1+1,1)$. It is easily seen
that $\varphi_1^*\mathcal{O}(d_2-d_1+1,1)=\mathcal{O}(d_2-d_1+2;d_2-d_1+1,1)$, so that this semi-ample line bundle
induces the contraction $(\Id,c)$. But in this case, $c$ has been described in Lemma \ref{lem1}.
\end{proof}

\begin{lem}\label{lem3}
Let $E\subset\bar{H}_{d_1,d_2}$ be an irreducible component of the exceptional locus of $c$.
Suppose that for every curve $C\subset E$ contracted by $c$, either $C\subset\Ima(\varphi_1)$ or $C$ meets $H^{\ic}_{d_1,d_2}$.
Then, for $[F,G]\in E$ general, $\{F=0\}$ has a reduced irreducible component.
\end{lem}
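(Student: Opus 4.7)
The plan is to argue by contradiction. I will assume that for a general $[F,G] \in E$ every irreducible component of $\{F=0\}$ has multiplicity $\geq 2$; writing $F_{\mathrm{rad}}$ for the squarefree radical of $F$, this means $F_{\mathrm{rad}}^2 \mid F$. The locus $Z = \{F : F_{\mathrm{rad}}^2 \mid F\}$ is closed in $\bar{H}_{d_1}$ (under specialization the multiplicities of coinciding components add up), so this condition will in fact hold for every $[F,G] \in E$. The goal is to produce a contracted curve $C \subset E$ neither contained in $\Ima(\varphi_1)$ nor meeting $H^{\ic}_{d_1,d_2}$, contradicting the hypothesis.

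Since $E$ is an irreducible component of the exceptional locus of the extremal ray contraction $c$, a contracted curve $C \subset E$ passes through every general $[F,G] \in E$. I first dispose of the case $C \subset \Ima(\varphi_1)$: because $\varphi_1$ is generically injective onto its image, $C$ lifts birationally to a curve contracted by $c \circ \varphi_1$, and Lemma \ref{lem2} forces this lift into $\bar{H}_{d_1-1} \times \Ima(i')$, where $i'([L,\Lambda]) = [L, \Lambda^{d_2-d_1+1}]$. Hence $C \subset \Ima(i)$, and then $E \subset \Ima(i)$; but for a general $(P,L)$ on $\Ima(i)$ one has $L \nmid P$ (since $P$ has at most $d_1-1$ linear factors), making $L$ a reduced factor of $F = PL$ and contradicting $E \subset Z$. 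By the hypothesis, $C$ must then meet the open locus $H^{\ic}_{d_1,d_2}$, so $E \cap H^{\ic}_{d_1,d_2}$ will be dense in $E$: in addition to $F_{\mathrm{rad}}^2 \mid F$, a general $[F,G] \in E$ will satisfy $\gcd(F,G) = 1$.

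To finish, I plan to construct, through such a general $[F,G]$, an infinitesimal $c$-contracted deformation that integrates to a curve in $E$ along which $F$ acquires a reduced factor. Writing $F = F_{\mathrm{rad}}^2 F_*$, I will seek tangent vectors of the form $(F',G') = (F_{\mathrm{rad}}A,\, F_{\mathrm{rad}}B)$ with $A \in H^0(\mathbb{P}^N, \mathcal{O}(d_1 - \deg F_{\mathrm{rad}}))$ not divisible by $F_{\mathrm{rad}}$, and $B$ satisfying the linear congruence $F_{\mathrm{rad}}(B - QA) \equiv 0 \pmod{F}$, where $Q$ is the Euclidean quotient from the proof of Proposition \ref{cont1}. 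This congruence is solvable because $F/F_{\mathrm{rad}} = F_{\mathrm{rad}}F_*$; a direct computation with the sections $\sigma_M$ shows that such $(F',G')$ lies in $\ker dc$. For generic $A$, the resulting family $F_t = F_{\mathrm{rad}}(F_{\mathrm{rad}}F_* + tA)$ has, for $t \neq 0$, the factor $F_{\mathrm{rad}}$ appearing with multiplicity exactly one, so every component of $F_{\mathrm{rad}}$ becomes a reduced component of $F_t$---contradicting $F_t \in Z$ for general $t$.

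The hard part will be to show that this first-order contracted direction is actually tangent to $E$ (rather than to another component of the exceptional locus) and integrates to a genuine algebraic curve in $E$ through $[F,G]$. At a general smooth point of $E$ this should follow from $E$ being an irreducible component of the exceptional locus and the extremality of the ray. The subtlety is that in positive characteristic, Frobenius-type kernel directions of $dc$ need not come from honest deformations; excluding these is where the hypothesis really has to be used, and this is precisely why Lemma \ref{lem3} is flagged as technical and relevant only in finite characteristic (cf. Remark \ref{frob}).
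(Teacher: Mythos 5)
Your setup (the locus $\{F:F_{\mathrm{rad}}^2\mid F\}$ is closed, so the negation of the conclusion holds on all of $E$) and your treatment of the alternative in which some contracted curve lies in $\Ima(\varphi_1)$ are essentially sound and close to the paper's; note only that you do not need, and have not justified, the jump from $C\subset\Ima(i)$ to $E\subset\Ima(i)$ --- a single such curve, which by Lemma \ref{lem2} has the explicit form $t\mapsto[P(L+tL'),PL^{d_2-d_1+1}]$, already violates $F_{\mathrm{rad}}^2\mid F$ at its general point and hence contradicts that condition holding on the irreducible $E$.

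The remaining case is where the proof fails, and it fails twice. First, the asserted ``direct computation'' is incorrect: the vectors $(F',G')=(F_{\mathrm{rad}}A,F_{\mathrm{rad}}B)$ with $F_{\mathrm{rad}}\nmid A$ do \emph{not} lie in $\Ker dc$. Test it in $\bar{H}^{(1)}_{2,3}$ at $[X^2,G]$ with $G(0)\neq 0$ and $F'=X(aX+b)$, $b\neq 0$. For each centre $\mu$ with $F(\mu)\neq 0$, the remainder $R_\mu$ of Proposition \ref{identities} satisfies $R_\mu(x_0)=F(\mu)^{d_2-d_1+1}G(x_0)(x_0-\mu)^{-(d_2-d_1+1)}$ at any root $x_0$ of $F$; since $x_0=0$ stays a root of $F_t=F+tF_{\mathrm{rad}}A$ and $G_t(0)=G(0)$, the section $R_\mu(0)\in\Lambda_0$ evaluated along the path equals $\mu^2G(0)(1+t(a+b/\mu))^2$, with logarithmic $t$-derivative $2(a+b/\mu)$. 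This depends on $\mu$ exactly when $b\neq 0$, so the ratio of two such sections (say $\mu=1$ and $\mu=2$) moves to first order and $d\bar{c}(F',G')\neq 0$; in general the offending term is $(d_2-d_1+1)F'(\mu)/F(\mu)$, which is non-constant in $\mu$ unless $F'\in\langle F\rangle$. Second, and independently, the step you flag as ``the hard part'' is not a technicality but an unfillable hole in this strategy: a vector in $\Ker dc$ at a point of $E$ need not be tangent to $E$, need not be tangent to a fibre of $c|_E$, and in no case does a first-order contracted direction ``integrate'' to a contracted curve --- $dc$ may degenerate at points where $c$ has finite fibres, so $\Ker dc$ by itself carries no information about actual positive-dimensional fibres, and neither irreducibility of $E$ nor extremality of the ray supplies the missing implication. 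Symptomatically, your argument uses the hypothesis that contracted curves meet $H^{\ic}_{d_1,d_2}$ only to arrange $\gcd(F,G)=1$ at a general point, whereas it must carry the main load. The paper's proof of this case is entirely different: it degenerates the given contracted curve under a one-parameter subgroup $\rho$ with generic weights to a union of $\rho$-orbit closures inside $E$, uses the $\mathbb{G}_m$-linearization of the degree-zero restriction of $\mathcal{O}(d_2-d_1+1,1)$ to equate the characters at the two fixed points, derives the monomial identity $X_0^{d_1(d_2-d_1+1)}M_0'=M_\infty^{d_2-d_1+1}M_\infty'$, and only then invokes the hypothesis (in the sub-case $0<u<d_1$) to reach a contradiction. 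Some argument of that global, curve-level nature is needed; the infinitesimal approach as proposed cannot be repaired by fixing the congruence on $B$.
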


\begin{proof}
First, if there exists a curve $C\subset (E\cap\Ima(\varphi_1))$ contracted by $c$, Lemma \ref{lem2}
shows that this curve is of the form $t\mapsto[P(L+tL'),PL^{d_2-d_1+1}]$. This expression shows that for $[F,G]\in C$ general,
$F$ has a reduced irreducible component. This implies the same property for $[F,G]\in E$ general.

Suppose on the contrary that every curve $C\subset E$ contracted by $c$ meets $H^{\ic}_{d_1,d_2}$,
choose such a curve, and choose $[F,G]\in C$.
Choose a coordinate system $X_0,\dots,X_N$ of $\mathbb{P}^N$ such that $X_0^{d_1}$ appears in the expression on $F$,
and let $\rho:\mathbb{G}_m\to GL_{N+1}$ be a one-parameter
subgroup acting diagonally with carefully chosen weights $0=\alpha_0\ll\dots\ll\alpha_N$.
If the weights do not satisfy particular relations, which we assume,
$\rho$ has only finitely many fixed points on $\bar{H}_{d_1,d_2}$, namely the points
of the form $[M,M']$, where $M$ and $M'$ are monomials in the $X_i$.
In particular $\lim_{t\to 0}\rho(t)\cdot[F,G]=[X_0^{d_1},M_0']$, where $M_0'$ is a monomial.

Now consider the $1$-cycle $Z=\lim_{t\to 0}\rho(t)\cdot[C]$. First, since $E$ is $PGL_{N+1}$-invariant, $Z\subset E$.
Then, since $\mathcal{O}(d_2-d_1+1,1)$ is nef and has intersection $0$ with $C$, it has intersection $0$ with any component of $Z$.
Finally, since $Z$ is $\rho$-invariant and $\rho$ has only finitely many fixed points,
every component of $Z$ is a closure of an orbit of $\rho$.
Moreover, $[X_0^{d_1},M_0']\in Z$.
All this shows that, up to replacing $C$ by a component of $Z$ containing $[X_0^{d_1},M_0']$,
it is possible to suppose that $C$ is the closure of the orbit under
$\rho$ of a point $[F,G]\in \bar{H}_{d_1,d_2}$. Moreover, either 
$\lim_{t\to 0}\rho(t)\cdot[F,G]$ or $\lim_{t\to \infty}\rho(t)\cdot[F,G]$
is equal to $[X_0^{d_1},M_0']$, but in the second case, $\lim_{t\to 0}\rho(t)\cdot[F,G]$
is necessarily also of the form $[X_0^{d_1},\cdot]$.
Thus, up to changing the monomial $M_0'$, we may suppose that $\lim_{t\to 0}\rho(t)\cdot[F,G]=[X_0^{d_1},M_0']$.
Let us define $[M_{\infty},M_{\infty}']:=\lim_{t\to \infty}\rho(t)\cdot[F,G]$. 

Consider the map $\mathbb{P}^1\to C$ defined by
$t\mapsto\rho(t)\cdot[F,G]$: it is $\mathbb{G}_m$-equivariant with respect to the natural action on $\mathbb{P}^1$.
The line bundle $\mathcal{O}(d_2-d_1+1,1)$ has degree $0$ on $C$,
thus restricts to the trivial line bundle on $\mathbb{P}^1$.
Moreover, $\mathcal{O}(d_2-d_1+1,1)$ is naturally $GL_{N+1}$-linearized,
hence $\mathbb{G}_m$-linearized via $\rho$. Since by \cite{PicardG} Corollary 5.3, all $\mathbb{G}_m$-linearizations
of the trivial line bundle on $\mathbb{P}^1$ differ of the trivial one by
a character, it follows that the characters with which $\mathbb{G}_m$ acts on the fibers
of $\mathcal{O}(d_2-d_1+1,1)$ over $[X_0^{d_1},M_0']$ and $[M_{\infty},M_{\infty}']$ are equal.
Moreover, it is easy to calculate these characters: they are equal to 
$(d_2-d_1+1)\deg_{\alpha}(X_0^{d_1})+\deg_{\alpha}(M_0')$ and
$(d_2-d_1+1)\deg_{\alpha}(M_{\infty})+\deg_{\alpha}(M_{\infty}')$ respectively, where
$\deg_{\alpha}(X_0^{r_0}\dots X_N^{r_N}):=\sum_i\alpha_ir_i$ (see \cite{Oolqp} Proposition 2.15). Consequently,
$(d_2-d_1+1)\deg_{\alpha}(X_0^{d_1})+\deg_{\alpha}(M_0')=(d_2-d_1+1)\deg_{\alpha}(M_{\infty})+\deg_{\alpha}(M_{\infty}')$.

But if the weights have been chosen to satisfy no particular relations, this implies that
$X_0^{d_1(d_2-d_1+1)})M_0'=M_{\infty}^{d_2-d_1+1}M_{\infty}'$.
>From this equation, it follows that there exist monomials $U,V$,
where $U$ is of degree $u$ and not divisible by $X_0$,
such that $[X_0^{d_1},M_0']=[X_0^{d_1},U^{d_2-d_1+1}V]$ and
$[M_{\infty},M_{\infty}']=[X_0^{d_1-u}U,X_0^{u(d_2-d_1+1)}V]$.
We obtain a contradiction by distinguishing three cases.
If $u=0$, $[X_0^{d_1},M_0']=[M_{\infty},M_{\infty}']$, but this is only possible if we have
$[F,G]=[X_0^{d_1},M_0']=[M_{\infty},M_{\infty}']$ contradicting the fact that the orbit of $[F,G]$ is a curve.
If $u=d_1$, the expression of $M_{\infty}'$ shows that $d_1(d_2-d_1+1)\leq d_2$,
thus that $d_1=1$. But in this case, $\Ima(\varphi_1)=\bar{H}_{d_1,d_2}$, contradicting the
fact that no contracted curve is in $\Ima(\varphi_1)$. Lastly, suppose $0<u<d_1$.
Then, since $\lim_{t\to \infty}\rho(t)\cdot\langle F\rangle=\langle M_{\infty}\rangle$, the monomial
$M_{\infty}$ appears in $F$. Thus, up to modifying $G$ by a multiple of $F$,
it is possible to suppose that no monomial of $G$ is divisible by $M_{\infty}$. With this
choice of $G$, $\lim_{t\to \infty}\rho(t)\cdot\langle G\rangle=\langle M'_{\infty}\rangle$.
Note that $X_0$ divides both $M_{\infty}$ and $M'_{\infty}$. By the choice of the
weights, this implies that $X_0$ divides both $F$ and $G$,
thus that $[F,G]\notin H^{\ic}_{d_1,d_2}$. Consequently, $C$, that is the closure of the orbit of $[F,G]$, does not
meet $H^{\ic}_{d_1,d_2}$: this is again a contradiction.
\end{proof}

\begin{lem}\label{lem4}
 Let $C\subset\bar{H}_{d_1,d_2}$ be a curve contracted by $c$. Then, for $x\in\mathbb{P}^N$ general, there exists a non-zero
$\Gamma_x\in H^0(\mathbb{P}^N,\mathcal{O}(d_2))$ such that $\mult_x(\Gamma_x)\geq d_2-d_1+1$,
and such that for every $[F,G]\in C$, $\Gamma_x\in\langle F,G\rangle$.
\end{lem}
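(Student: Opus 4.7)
The plan is to specialize the construction used in the proof of Proposition \ref{cont1} to a coordinate system adapted to $x$. Given $x\in\mathbb{P}^N$, I would choose projective coordinates $(X_0,\ldots,X_N)$ with $x=[1:0:\cdots:0]$. A monomial $M\in\mathfrak{M}_{d_2}$ fails to be divisible by $X_0^{d_1}$ precisely when its $X_0$-degree is strictly less than $d_1$, equivalently when $\mult_x M\geq d_2-d_1+1$. Specializing the identity (\ref{euclide}) to a lift $(F,G)$ of a point of $\bar{H}_{d_1,d_2}$ with $F(x)\neq 0$ (equivalently $f^{(X_0^{d_1})}(F)\neq 0$) yields a polynomial
\[
R_{F,G}=(f^{(X_0^{d_1})}(F))^{d_2-d_1+1}\,G - Q(F,G)\cdot F \;\in\; H^0(\mathbb{P}^N,\mathcal{O}(d_2))
\]
that lies in the subspace $\langle F,G\rangle:=F\cdot H^0(\mathbb{P}^N,\mathcal{O}(d_2-d_1))+\mathbbm{k} G$, has no monomial divisible by $X_0^{d_1}$ (so $\mult_x R_{F,G}\geq d_2-d_1+1$), and is nonzero, since $R_{F,G}=0$ combined with $f^{(X_0^{d_1})}(F)\neq 0$ would force $G\in F\cdot H^0(\mathbb{P}^N,\mathcal{O}(d_2-d_1))$, contradicting $[F,G]\in\bar{H}_{d_1,d_2}$.

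For $x\in\mathbb{P}^N$ general---say, outside the hypersurface $\{F_0=0\}$ for some fixed $[F_0,G_0]\in C$---the open subset $C\cap U_x:=\{[F,G]\in C\mid F(x)\neq 0\}$ is nonempty. The coefficients of $R_{F,G}$ are by construction the restrictions to $C\cap U_x$ of the sections $\sigma_M$ indexed by monomials $M$ with $X_0^{d_1}\nmid M$, which form part of the base-point-free linear system inducing $\bar{c}$. Since $C$ is contracted by $c$, and hence by $\bar{c}$, the whole linear system is projectively constant along $C$; in particular this sub-linear-system is projectively constant on $C\cap U_x$. Therefore $R_{F,G}$ varies only by a nonzero scalar as $[F,G]$ ranges over $C\cap U_x$, and I would set $\Gamma_x$ to be a common scalar representative.

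To conclude, $\Gamma_x$ is a nonzero element of $H^0(\mathbb{P}^N,\mathcal{O}(d_2))$ with $\mult_x\Gamma_x\geq d_2-d_1+1$, lying in $\langle F,G\rangle$ for every $[F,G]\in C\cap U_x$. It remains to extend this containment to all of $C$: the spaces $F\cdot H^0(\mathbb{P}^N,\mathcal{O}(d_2-d_1))+\mathbbm{k} G$ have constant dimension $h^0(\mathcal{O}(d_2-d_1))+1$ and assemble into a subbundle of the trivial bundle with fiber $H^0(\mathbb{P}^N,\mathcal{O}(d_2))$ over $\bar{H}_{d_1,d_2}$, so ``$\Gamma_x\in\langle F,G\rangle$'' is a closed condition on $\bar{H}_{d_1,d_2}$; as it holds on the open dense subset $C\cap U_x$ of the irreducible curve $C$, it holds on all of $C$. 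The only real content of the argument is thus the identification of $\Gamma_x$ with a scalar multiple of the output of the formal euclidean division, which is exactly what the sections $\sigma_M$ of Proposition \ref{cont1} were built to detect; the contraction hypothesis then forces $\Gamma_x$ to be independent of the point of $C$.
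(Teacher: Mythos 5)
Your proposal is correct and follows essentially the same route as the paper: specialize the formal euclidean division identity (\ref{euclide}) in coordinates adapted to $x$ to produce $R_{F,G}\in\langle F,G\rangle$ with no monomial divisible by $X_0^{d_1}$, use the fact that the $\sigma_M$ cut out the contraction $\bar{c}$ to see that $R_{F,G}$ is projectively constant along the contracted curve $C$, and extend from the open set $\{F(x)\neq 0\}\cap C$ to all of $C$ by a closedness/specialization argument. The only cosmetic difference is that you make the final specialization step explicit via the subbundle of spaces $F\cdot H^0(\mathbb{P}^N,\mathcal{O}(d_2-d_1))+\mathbbm{k}G$, where the paper simply invokes specialization.
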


\begin{proof}
First, since $C$ is contracted by $c$ it is also contracted by $\bar{c}$.
Fix $[F,G]\in C$, let $x\in\mathbb{P}^N$ be such that
$F(x)\neq 0$, and fix a coordinate system in which $x=\{X_1=\dots=X_N=0\}$. Substitute
coefficients of $F$ and $G$ in the $f^{(M)}$ and the
$g^{(M)}$ in (\ref{euclide}) to get an identity of the form $aG=QF+R$. Since $F(x)\neq 0$,
the monomial $X_0^{d_1}$ appears in $F$, so that $a\neq 0$. Let us show that $\Gamma_x:=R$ does the job.

It is non-zero because $F \nmid G$ and $a\neq 0$. It has multiplicity $\geq d_2-d_1+1$ at $x$
because none of its monomials is divisible by $X_0^{d_1}$.  
Finally, if $[F',G']\in C$ is such that $F'(x)\neq 0$,
substitute the coefficients of $F'$ and $G'$ in the $f^{(M)}$ and the
$g^{(M)}$ in (\ref{euclide})
to get an identity of the form $a'G'=Q'F'+R'$ with $a'\neq 0$, hence $R'\neq 0$.
Since the coefficients $\sigma_M$ of $r$ in (\ref{euclide}) have been used to construct
the linear system defining $\bar{c}$, and since $[F,G]$ and $[F',G']$ have the
same image by $\bar{c}$, it follows that $R$ and $R'$ are proportional.
Thus $\Gamma_x\in\langle F',G'\rangle$. Moreover, by specialization, $\Gamma_x\in\langle F',G'\rangle$
holds in fact for every $[F',G']\in C$.
\end{proof}

\begin{lem}\label{lem5}
Suppose that $N=1$. Let $C\subset\bar{H}_{d_1,d_2}$ be a curve satisfying the following assumptions.
\begin{enumerate}
\item[(i)] If $[F,G]\in C$ is general, $F$ and $G$ do not have a common root.
\item[(ii)] If $[F,G]\in C$ is general, $F$ has a simple root.
\item[(iii)] If $[F,G],[F'G']\in C$ are general, $F$ is not proportional to $F'$.
\item[(iv)] For a general $x\in\mathbb{P}^1$ there exists a non-zero
$\Gamma_x\in H^0(\mathbb{P}^1,\mathcal{O}(d_2))$ such that $\mult_x(\Gamma_x)\geq d_2-d_1+1$,
and such that for every $[F,G]\in C$, $\Gamma_x\in\langle F,G\rangle$.
\end{enumerate}
Then $d_1=1$.
\end{lem}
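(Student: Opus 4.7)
The plan is to assume $d_1 \ge 2$ and derive a contradiction from (i)--(iv). The strategy centers on the subspace
\[
A \;:=\; \bigcap_{[F,G]\in C} V_{F,G} \;\subset\; H^0(\mathbb{P}^1,\mathcal{O}(d_2)), \qquad V_{F,G}\;=\;F\cdot H^0(\mathbb{P}^1,\mathcal{O}(d_2-d_1)) + \mathbbm{k}\cdot G,
\]
the common degree-$d_2$ part of the ideals $(F,G)$ for $[F,G]\in C$. Condition (i) forces $F\nmid G$, so each $V_{F,G}$ has dimension $d_2-d_1+2$, and hence $\dim A\le d_2-d_1+2$. Hypothesis (iv) places $\Gamma_x$ inside $A$ for generic $x\in\mathbb{P}^1$, with $\mathrm{mult}_x(\Gamma_x)\ge d_2-d_1+1$.

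The key step, and main obstacle, is to show that in fact $\dim A = d_2-d_1+2$, so that $V_{F,G}$ is constant on $C$. I would argue by contradiction: if $\dim A=a\le d_2-d_1+1$, then for a basis $f_1,\dots,f_a$ the jet-evaluation at $x$ is represented by the $a\times (d_2-d_1+1)$ matrix $\bigl(f_i^{(j)}(x)\bigr)$, whose maximal minors are Wronskian-type expressions. In characteristic zero these do not vanish identically (by linear independence of the $f_i$), and in positive characteristic the simple-root hypothesis (ii) supplied by Lemma \ref{lem3} rules out Frobenius-style degenerations. Either way the generic jet-evaluation is injective, so $A$ contains no nonzero section of multiplicity $\ge d_2-d_1+1$ at a generic $x$, contradicting (iv). Hence $V_{F,G}=A$ for every $[F,G]\in C$.

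Next I would use (iii) to extract a common factor of the $F$'s. Fix $[F,G],[F',G']\in C$ with $F\not\propto F'$, set $h:=\deg\gcd(F,F')$, and compute that
\[
F\cdot H^0(\mathcal{O}(d_2-d_1)) + F'\cdot H^0(\mathcal{O}(d_2-d_1))
\]
has dimension $d_2-h+1$ (the intersection being $(FF'/\gcd(F,F'))\cdot H^0(\mathcal{O}(d_2-2d_1+h))$). Fitting this inside $A$ of dimension $d_2-d_1+2$ yields $h\ge d_1-1$, so $h=d_1-1$ (as $F\not\propto F'$ forces $h<d_1$). Running $F$ over $C$ and noting that each $\gcd(F_0,F)$ lies in the finite set of degree-$(d_1-1)$ divisors of a fixed $F_0$, connectedness of $C$ (it is integral of dimension one) implies all $F$'s share a fixed common factor $P$ of degree $d_1-1$: $F=P\cdot L_F$ for a varying linear form $L_F$.

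To finish, two distinct linear forms $L,L'$ on $\mathbb{P}^1$ generate the irrelevant ideal, so $L\cdot H^0(\mathcal{O}(d_2-d_1))+L'\cdot H^0(\mathcal{O}(d_2-d_1))=H^0(\mathcal{O}(d_2-d_1+1))$; multiplying by $P$ exhibits $P\cdot H^0(\mathcal{O}(d_2-d_1+1))$ as a subspace of $A$ of dimension $d_2-d_1+2$, which thus equals $A$. Then $G\in V_{F,G}=A=P\cdot H^0(\mathcal{O}(d_2-d_1+1))$ forces $P\mid G$, so every root of $P$ is a common root of $F$ and $G$. Since $d_1\ge 2$ makes $\deg P\ge 1$, this contradicts (i) and proves $d_1=1$.
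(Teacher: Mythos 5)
Your argument reduces everything to the single claim $\dim A = d_2-d_1+2$, and that claim is exactly where the proof breaks down in positive characteristic --- which, per Remark \ref{frob}, is the only hard case. In characteristic zero your Wronskian argument is complete (consistently with the remark that (ii) is then superfluous). But in characteristic $p$ the implication ``$\dim A\le d_2-d_1+1$ implies the generic $(d_2-d_1+1)$-jet evaluation is injective on $A$'' is simply false, and the sentence asserting that hypothesis (ii) ``rules out Frobenius-style degenerations'' is not an argument: (ii) is a condition on the individual polynomials $F$, and nothing in your setup transfers it to the intersection space $A$, whose basis can be Wronskian-degenerate regardless. Concretely, for the curve $t\mapsto[(X+t)^p,X^{2p}]$ of Remark \ref{frob} (take $p=2$, so $d_1=2$, $d_2=4$) one computes $A=\mathrm{span}\{1,X^2,X^4\}$, of dimension $d_2-d_1+1=3$, while $(X-x)^4=X^4-x^4\in A$ has multiplicity $4\ge d_2-d_1+1$ at every $x$; hypotheses (i), (iii), (iv) all hold there, so your dimension count cannot follow from them alone. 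To repair the proof you would need a genuine mechanism converting (ii) into either $\dim A=d_2-d_1+2$ or generic injectivity of the jet map on $A$, and none is indicated.

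For what it is worth, the rest of your route is correct and is genuinely different from the paper's: granting that $V_{F,G}$ is constant along $C$, the gcd count $h=d_1-1$ from (iii), the extraction of a common factor $P$ of degree $d_1-1$ via irreducibility of $C$, the identification $A=P\cdot H^0(\mathcal{O}(d_2-d_1+1))$, and the final contradiction with (i) are all sound, and arguably cleaner than the paper's chain of divisibility relations. The paper avoids forming $A$ altogether: it uses (ii) at the outset to pin down $\Gamma_x=(X-x)^{d_2-d_1}F$ at a simple root $x$ of $F$ (this explicit identification is the characteristic-free substitute for your Wronskian step) and then plays the resulting memberships $(X-x)^{d_2-d_1}F\in\langle F',G'\rangle$ against each other. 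That is the missing ingredient here.
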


\begin{proof}
Let $X_0,X_1$ be coordinates on $\mathbb{P}^1$; we will work with the inhomogeneous coordinate $X=X_1/X_0$.

First, by specialization, the assumption (iv) holds in fact for every $x\in\mathbb{P}^1$. 
Let us fix $[F,G]\in C$ general, $x\in\mathbb{P}^1$ a simple root of $F$, and consider
$\Gamma_x\in H^0(\mathbb{P}^1,\mathcal{O}(d_2))$ as in (iv).
Then there exist $Q\in H^0(\mathbb{P}^1,\mathcal{O}(d_2-d_1))$ and $\alpha\in \mathbbm{k}$ such
that $\Gamma_x=QF+\alpha G$. Since $[F,G]$ is general, by (i), $G(x)\neq 0$,
and we have $\alpha=0$. But then, since $x$ is a simple root of $F$ and $\mult_x(\Gamma_x)\geq d_2-d_1+1$, we necessarily have
$\Gamma_x=(X-x)^{d_2-d_1}F$. In particular, $(X-x)^{d_2-d_1}F\in\langle F',G'\rangle$ for every $[F',G']\in C$.

Now, let us prove that $\Pi:=\gcd_{[F,G]\in C}(F)$ is equal to $1$. Choose for contradiction a root $\pi$ of $\Pi$.
Choose $[F,G],[F'G']\in C$ general, and let $x$ be a simple root of $F$ by (ii). 
By the above, there exist $Q\in H^0(\mathbb{P}^1,\mathcal{O}(d_2-d_1))$ and
$\alpha\in \mathbbm{k}$ such that $(X-x)^{d_2-d_1}F=QF'+\alpha G'$.
Evaluating it at $\pi$, and since $G'(\pi)\neq 0$ by (i), we get $\alpha=0$.
In particular, $F'|(X-x)^{d_2-d_1}F$. It follows that there are only finitely many
possibilities for $F'$, contradicting (iii). 

Since $\Pi=1$, it is a consequence of (ii) that for
$x\in\mathbb{P}^1$ general, there exists $[F,G]\in C$ such that $x$ is a simple root of $F$.
Choose $x\in\mathbb{P}^1$ general. Let $[F,G],[F',G']\in C$
such that $x$ is a simple root of both $F$ and $F'$. I claim that $F$ and $F'$ are proportional.
Indeed, there exist $Q\in H^0(\mathbb{P}^1,\mathcal{O}(d_2-d_1))$ and $\alpha\in \mathbbm{k}$ such that $(X-x)^{d_2-d_1}F=QF'+\alpha G'$.
Since $x$ is general, it cannot be a root of both $F'$ and
$G'$ by (i). This implies $\alpha=0$ and $F'|(X-x)^{d_2-d_1}F$.
Since $F$ and $F'$ both have $x$ as a simple root, it indeed follows that they are proportional.

We will show in this paragraph that for $[F,G]\in C$ general, $F$ only has simple roots.
Let $[F,G]\in C$ be general and let $x$ be a simple root of $F$ by (ii). Suppose that $F$ has
another root $y$. Since $\Pi=1$, when $[F,G]\in C$ is chosen general, $y$ is a
general point of $\mathbb{P}^1$. As a consequence, it is possible
to find $[F',G']\in C$ such that $y$ is a simple root of $F'$. Then there exist $Q\in H^0(\mathbb{P}^1,\mathcal{O}(d_2-d_1))$ and
$\alpha\in \mathbbm{k}$ such that $(X-y)^{d_2-d_1}F'=QF+\alpha G$.
By (i), since $[F,G]\in C$ is general, $G(y)\neq 0$, and $\alpha=0$, so that $F|(X-y)^{d_2-d_1}F'$. This
shows that $x$ is a simple root of $F'$, thus that $F$ and $F'$
are proportional by the previous paragraph, and thus that $y$ is a simple root of $F$. 

It is now possible to conclude. If $d_1>1$, and $[F,G]\in C$ is general, $F$ has at least two distinct simple roots $x$ and $y$. 
Then, for general $[F',G']\in C$, there exist $Q_x,Q_y\in H^0(\mathbb{P}^1,\mathcal{O}(d_2-d_1))$ and
$\alpha_x,\alpha_y\in \mathbbm{k}$ such that $(X-x)^{d_2-d_1}F=Q_xF'+\alpha_x G'$ and $(X-y)^{d_2-d_1}F=Q_yF'+\alpha_y G'$.
Since $\Pi=1$ and $[F',G']$ is general (use (i)), we see that $\alpha_x,\alpha_y\neq 0$.
Thus $F'|(\alpha_y(X-x)^{d_2-d_1}-\alpha_x(X-y)^{d_2-d_1})F$. Again since $\Pi=1$ and $[F',G']$ is general, we get
$F'|(\alpha_y(X-x)^{d_2-d_1}-\alpha_x(X-y)^{d_2-d_1})$.
It follows by specialization that for every $[F',G']\in C$, $F'$ divides a non-trivial linear combination
of $(X-x)^{d_2-d_1}$ and $(X-y)^{d_2-d_1}$. As a consequence, $F$ itself divides a non-trivial linear combination
of $(X-x)^{d_2-d_1}$ and $(X-y)^{d_2-d_1}$. This contradicts the fact that both $x$ and $y$ are roots of $F$, and ends the proof.
\end{proof}

\begin{proof}[Proof of Theorem \ref{th1'}]
It suffices to prove that, if $C\subset \bar{H}_{d_1,d_2}$ is
a curve contracted by $c$, and if $[F,G]\in C$ is general, $\gcd(F,G)$ has degree $d_1-1$.
Indeed, this implies that $C$ is in the image of $\varphi_1$, and Lemma \ref{lem2} concludes.
We will prove this statement by induction on $d_1$, the case $d_1=1$ being trivial.
Let $C\subset \bar{H}_{d_1,d_2}$ be a curve contracted by $c$, and
let $k\in\{1,\dots,d_1\}$ be the integer such that, if $[F,G]\in C$ is general,
$\gcd(F,G)$ has degree $d_1-k$. We distinguish two cases.

If $k<d_1$, $C$ is included in the image of the multiplication map $\varphi_k$.
Let $C_1=\varphi_k^{-1}(C)$.
A calculation shows that 
$\varphi_k^*\mathcal{O}(d_2-d_1+1,1)=\mathcal{O}(d_2-d_1+2;d_2-d_1+1,1)$. Since $C\cdot\mathcal{O}(d_2-d_1+1,1)=0$,
$C_1\cdot\mathcal{O}(d_2-d_1+2;d_2-d_1+1,1)=0$, which implies that the image $C_2$ of $C_1$ in $\bar{H}_{k,d_2-d_1+k}$ is a curve satisfying
$C_2\cdot\mathcal{O}(d_2-d_1+1,1)=0$. Hence it is a curve contracted by $c$. Moreover, by the choice of $k$, if $[F,G]\in C_2$ is general,
$\gcd(F,G)$ has degree $0$. By the induction hypothesis, this implies $k=1$, as wanted.

Suppose now that $k=d_1$; we need to prove that $d_1=1$.
Let $E$ be an irreducible component of the exceptional locus of $c$ containing $C$.
Applying Lemma \ref{lem3}, it is possible, up to changing $C$, to suppose
that for $[F,G]\in C$ general, $\{F=0\}$ has a reduced irreducible component.
By Lemma \ref{lem4}, for every $x\in\mathbb{P}^N$ there exists a non-zero
$\Gamma'_x\in H^0(\mathbb{P}^N,\mathcal{O}(d_2))$ such that
$\mult_x(\Gamma'_x)\geq d_2-d_1+1$, and such that for every $[F,G]\in C$, $\Gamma'_x\in\langle F,G\rangle$.
Choose a general linear subspace $\mathbb{P}^1\subset\mathbb{P}^N$.
If $[F,G]\in C$ is a general point, one obtains by restriction to $\mathbb{P}^1$
a point $[F,G]\in\bar{H}^{(1)}_{d_1,d_2}$: this induces a curve
$C'\subset\bar{H}^{(1)}_{d_1,d_2}$. If $x\in\mathbb{P}^1$, one obtains by restricting
$\Gamma'_x$ an element $\Gamma_x\in H^0(\mathbb{P}^1,\mathcal{O}(d_2-d_1+k))$.
Let us check that the hypotheses of Lemma \ref{lem5} are satisfied.
All are immediate consequences of the genericity of the chosen subspace $\mathbb{P}^1\subset\mathbb{P}^N$,
and of an additional argument. For (i), you need to use the fact that $k=d_1$, (ii) is a consequence of the fact that
for $[F,G]\in C$ general, $\{F=0\}$ has a reduced irreducible component,
and (iv) is deduced from the corresponding properties of $\Gamma'_x$. As for (iii), note that if
$[F,G],[F',G']\in C$ are general, $F$ cannot be proportional to $F'$
: if it were the case, $C$ would be in a fiber of the projection $\bar{H}_{d_1,d_2}\to\bar{H}_{d_1}$
and its intersection with $\mathcal{O}(d_2-d_1+1,1)$ would be positive.
Then Lemma \ref{lem5} applies and shows that $d_1=1$, as wanted.
\end{proof}

\begin{rem}\label{frob}
Lemma \ref{lem5} would remain true in characteristic $0$ without the hypothesis (ii), making the use Lemma \ref{lem3} unnecessary.
However, in finite characteristic, it is not the case. As an example,
in finite characteristic $p$, the curve $C\subset \bar{H}_{2,4}$ defined by
$t\mapsto [(X+t)^p,X^{2p}]$ satisfies all assumptions but (ii) of Lemma \ref{lem5}
(take $\Gamma_x=(X-x)^{2p}$), but not its conclusion. 
\end{rem}

\section{Degenerate complete intersections}

In this section, we keep the previous notations, but we set $d_1=1$ and suppose that $N\geq 2$.
Then, the whole of $\bar{H}_{d_1,d_2}$ is the Hilbert scheme $H^{\ic}_{d_1,d_2}$ of complete
intersections, as $F$ and $G$ cannot have a common factor.

In this case, it is not difficult to construct complete families of smooth complete intersections.
Note that since the moduli space of smooth hypersurfaces in $\mathbb{P}^{N-1}$ is affine (\cite{GIT} Proposition 4.2),
such families are necessarily isotrivial.

\begin{prop}\label{compic1}
There exist complete curves in $H_{1,d_2}$.
\end{prop}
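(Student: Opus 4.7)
The plan is to deduce this from the MMP structure of $\bar H_{1,d_2}$ established in Theorem~\ref{th2}, via the general mechanism of the introductory Proposition. Theorem~\ref{th2}(i) asserts that $\bar H_{1,d_2}$ is a Mori dream space whose effective cone is generated by $\mathcal O(1,0)$ and $\Delta$, so both Questions~\ref{qMDS} and~\ref{qEff} hold for $\bar H_{1,d_2}$ and the argument of the introductory Proposition applies verbatim. Concretely, one runs the MMP and reads off the last step from Theorem~\ref{th2}: if $d_2\neq 2$ and $(N,d_2)\neq(2,3)$, part~(ii) gives a fibration $Y\to X$ over the GIT moduli of degree-$d_2$ hypersurfaces in $\mathbb P^{N-1}$ induced by the extremal divisor $\mathcal O(\Delta)$, so a generic fiber $Y_x$ is disjoint from $\Delta$ and meets the flipped loci only in codimension $\geq 2$; a generic hyperplane section of $Y_x$ then descends to a complete curve in $H_{1,d_2}$. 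In the cases $d_2=2$ or $(N,d_2)=(2,3)$, part~(iii) instead gives a compactification of $H_{1,d_2}$ with codimension $\geq 2$ boundary, so a generic hyperplane section of this compactification lies entirely in $H_{1,d_2}$.

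A more concrete but less uniform alternative is to exhibit explicit isotrivial complete families. For $N=2$ this can be done directly: pick pairwise distinct scalars $r_1,\dots,r_{d_2}\in\mathbbm k$ and consider the pencil $L_t=\{t_0 y - t_1 x = 0\}\subset\mathbb P^2$ parameterised by $t=[t_0:t_1]\in\mathbb P^1$. The assignment
\[
t \longmapsto \bigl(L_t,\,\{(t_0 r_i : t_1 r_i : 1)\}_{i=1}^{d_2}\bigr)
\]
is a well-defined morphism $\mathbb P^1\to H_{1,d_2}$, because for every $t\in\mathbb P^1$ the $d_2$ listed points lie on $L_t$ (from the identity $t_0(t_1 r_i)=t_1(t_0 r_i)$) and are pairwise distinct (since $(t_0,t_1)\neq(0,0)$ and the $r_i$ are distinct). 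The resulting image is a complete curve in $H_{1,d_2}$.

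The main obstacle in the elementary approach is extending this construction to $N\geq 3$: a fixed Fermat-type equation transported along a pencil of hyperplanes in $\mathbb P^N$ unavoidably degenerates at the two base points of the pencil, so to obtain a fibrewise smooth family one would need either to combine chart-wise representatives carefully, or to pass to a higher-degree curve in $(\mathbb P^N)^*$ to trivialise the universal hyperplane bundle up to $\mathrm{Aut}(X_0)$. It is precisely this subtlety in the general case that makes the MMP-based argument above the cleanest uniform proof; once Theorem~\ref{th2} is in hand, Proposition~\ref{compic1} follows formally.
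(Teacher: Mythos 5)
Your proposal is correct but takes a genuinely different route from the paper. The paper's proof is entirely elementary and uniform in $N\geq 2$: fix once and for all a smooth hypersurface $H\subset\mathbb{P}^{N-1}$ of degree $d_2$, identify the set of embeddings $\mathbb{P}^{N-1}\hookrightarrow\mathbb{P}^N$ with the rank-$N$ locus in $\mathbb{P}(M_{N+1,N})$, observe that its complement (cut out by the vanishing of all maximal minors) has codimension $\geq 2$, take a curve of generic hyperplane sections of $\mathbb{P}(M_{N+1,N})$ avoiding that locus, and push $H$ forward along the resulting complete family of embeddings. This is exactly the fix for the obstacle you describe at the end: by parametrizing embeddings (matrices) rather than hyperplanes in $(\mathbb{P}^N)^*$, one transports a fixed smooth $H$ rather than restricting a fixed ambient equation, so nothing ever degenerates and no trivialization up to $\Aut$ is needed; moreover the degenerate matrices form a codimension-$2$ locus, so even a generic line in $\mathbb{P}(M_{N+1,N})$ suffices. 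Your explicit $N=2$ family is a special case of this (the "hypersurface" being $d_2$ distinct points of $\mathbb{P}^1$), and is correct once one notes that the data (line, $d_2$ disjoint sections) defines a flat family of complete intersection subschemes, hence a morphism to the Hilbert scheme $H_{1,d_2}$ -- you should supply the pair $[F_t,G_t]$ or invoke this Hilbert-scheme identification, since as written you only exhibit the subscheme. Your first, MMP-based argument is logically sound and non-circular (nothing in the proofs of Proposition \ref{VGIT} or Theorem \ref{th2} uses Proposition \ref{compic1}), but it inverts the paper's order: the proposition is proved \emph{before} Theorem \ref{th2} precisely because its matrix-space trick is what motivates the GIT presentation $X=\bar{H}_{d_2}^{(N-1)}\times\mathbb{P}(M_{N+1,N})$ underlying that theorem. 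So what your route buys is uniformity of mechanism with the rest of the paper at the cost of invoking much heavier machinery (Mori dream spaces, variation of GIT) for a statement the paper settles in five lines; what the paper's route buys is a self-contained construction available from the outset.
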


\begin{proof}
Fix $H\subset\mathbb{P}^{N-1}$ an arbitrary smooth hypersurface of degree $d_2$.
The set of embeddings of $\mathbb{P}^{N-1}$ in $\mathbb{P}^{N}$ is naturally identified with
an open subset of the space $\mathbb{P}(M_{N+1,N})$ of matrices up to scalar. Moreover,
its complement has codimension $\geq 2$, as it is defined by the vanishing of several minors.
By taking generic hyperplane sections of $\mathbb{P}(M_{N+1,N})$, one obtains a complete curve included in the space of embeddings
of $\mathbb{P}^{N-1}$ in $\mathbb{P}^{N}$. Considering the image of $H$ by these embeddings,
we get a complete family of smooth complete intersections.
\end{proof}

The trick used in this proof will allow us to realize the MMP of $\bar{H}_{1,d_2}$ as a variation of GIT (see \cite{VGIT}).
Let us introduce the space $X=\bar{H}_{d_2}^{(N-1)}\times\mathbb{P}(M_{N+1,N})$.
The linear group $G=SL_{N}$ acts diagonally by $g\cdot(F,M)=(F\circ g^{-1},Mg^{-1})$,
and all line bundles on $X$ are naturally $G$-linearized.

\begin{prop}\label{VGIT}
\hspace{1em}

\begin{enumerate}
\item[(i)] $X\hq_{\mathcal{O}(0,1)}G=\bar{H}_1^{(N)}$.
\item[(ii)] $X\hq_{\mathcal{O}(\varepsilon,1)}G=\bar{H}_{1,d_2}$ if $0<\varepsilon<\frac{1}{d_2(N-1)}$.
\item[(iii)] $X\hq_{\mathcal{O}(1,0)}G=\bar{H}_{d_2}^{(N-1)}\hq G$ is the
GIT moduli space $\mathcal{H}$ of degree $d_2$ hypersurfaces in $\mathbb{P}^{N-1}$.
\item[(iv)] If $d_2=2$, or $N=2$ and $d_2=3$, and if $0<\varepsilon\ll 1$,
$X\hq_{\mathcal{O}(1,\varepsilon)}G$ is a compactification of $H_{1,d_2}$ with
a boundary of codimension $\geq 2$.
\end{enumerate}
\end{prop}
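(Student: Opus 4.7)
The plan is to analyze each of the four GIT quotients by combining Hilbert--Mumford numerical stability with an explicit description of the geometric quotient.

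Parts (i) and (iii) are formal consequences of the fact that the linearization is pulled back from a single factor. The ring of $G$-invariant sections then reduces to the invariants on that factor, and $X\hq_L G$ equals its GIT quotient. For (iii) this is just the definition of $\mathcal{H}$. For (i) one is reduced to $\mathbb{P}(M_{N+1,N})\hq SL_N$ with the right-multiplication action; here the semistable locus is the open set of full-rank matrices, and the map $M\mapsto\text{Im}(M)$ identifies the quotient with the Grassmannian $\mathbb{G}(N,N+1)\cong\mathbb{P}^N=\bar{H}_1^{(N)}$.

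Part (ii) is the heart. Running Hilbert--Mumford for $\mathcal{O}(\varepsilon,1)$ against a diagonal 1-PS $\lambda(t)=\text{diag}(t^{\alpha_1},\dots,t^{\alpha_N})$ with $\sum\alpha_i=0$, the minimum-weight function splits as $\mu^-((F,M),\lambda)=\varepsilon\mu^-(F,\lambda)+\mu^-(M,\lambda)$. A direct computation (column $i$ of $M$ carries weight $-\alpha_i$) gives $\mu^-(M,\lambda)=-\max\{\alpha_i:\text{column }i\text{ of }M\text{ is nonzero}\}$: so $M$ of rank $N$ forces $\mu^-(M,\lambda)=-\max_i\alpha_i<0$, while $M$ of rank $<N$ admits the destabilizing 1-PS $\alpha_k=N-1,\alpha_{i\neq k}=-1$ in a basis with $\langle e_k\rangle\subset\ker M$. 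On the other hand $\mu^-(F,\lambda)$ is bounded in absolute value by $d_2(N-1)\max_i\alpha_i$ thanks to $\sum\alpha_i=0$. Combining these, $\mu^-((F,M),\lambda)\leq\max_i\alpha_i\cdot(\varepsilon d_2(N-1)-1)<0$ precisely when $\varepsilon<1/(d_2(N-1))$, so in that range every pair with full-rank $M$ is stable. The quotient is then read off from the $G$-invariant morphism $(F,M)\mapsto(L_M,F_M)\in\bar{H}_{1,d_2}$, where $L_M\subset\mathbb{P}^N$ is the image hyperplane of $M$ and $F_M$ is the class in $H^0(\mathbb{P}^N,\mathcal{O}(d_2))/\langle L_M\rangle$ of any extension of $F$ pushed forward via the isomorphism $\mathbb{P}^{N-1}\simeq L_M$ induced by $M$. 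This map is bijective on $G$-orbits between smooth varieties, hence an isomorphism.

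Part (iv) is the most delicate. The idea is to continue the VGIT analysis past the wall at $\varepsilon=0$ on the opposite side, with linearization $\mathcal{O}(1,\varepsilon)$ and $0<\varepsilon\ll 1$. The hypotheses $d_2=2$ or $(N,d_2)=(2,3)$ are chosen precisely so that every smooth degree $d_2$ hypersurface in $\mathbb{P}^{N-1}$ is $SL_N$-stable by classical GIT; for small $\varepsilon$ the Hilbert--Mumford calculation then shows that any $(F,M)$ with $F$ smooth and $M$ of full rank is stable, and the quotient of this open subset is tautologically $H_{1,d_2}$, which provides the compactification. The real obstacle is the codimension $\geq 2$ bound on the boundary. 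I would enumerate the strictly semistable strata (pairs with $F$ singular but still semistable, or $M$ of rank $<N$), compute the dimensions of their images in $X\hq_{\mathcal{O}(1,\varepsilon)}G$, and show each contributes at most codimension $2$. The decisive input is that $\mathcal{H}$ is extremely small in these two cases --- a point for $d_2=2$ and a curve for $(N,d_2)=(2,3)$ --- which collapses the singular-$F$ locus in the quotient sufficiently that it can be absorbed into codimension $\geq 2$.
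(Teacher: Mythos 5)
Parts (i)--(iii) of your proposal follow the paper's proof essentially verbatim: functoriality of GIT for linearizations pulled back from a single factor, the Hilbert--Mumford computation showing that $X^{ss}(\mathcal{O}(\varepsilon,1))=X^{s}(\mathcal{O}(\varepsilon,1))$ is exactly the full-rank locus when $0<\varepsilon<\frac{1}{d_2(N-1)}$, and the identification of the geometric quotient with $(F,M)\mapsto M(\{F=0\})$. (Your ``bijective on orbits between smooth varieties, hence an isomorphism'' is a bit quick over a field of positive characteristic, but the map is visibly a geometric quotient, which is all that is needed and is what the paper invokes.)

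Part (iv) has a genuine gap. The role of the hypotheses $d_2=2$, or $N=2$ and $d_2=3$, is not merely that smooth hypersurfaces are stable: it is that $G$ acts \emph{transitively} on $\bar{H}_{d_2}^{(N-1)}\setminus\Delta$, so that every $G$-invariant divisor on $\bar{H}_{d_2}^{(N-1)}$ is a multiple of $\Delta$ and consequently \emph{every singular hypersurface is unstable} for $\mathcal{O}(1,0)$, hence lies outside $X^{ss}(\mathcal{O}(1,\varepsilon))$ for $0<\varepsilon\ll 1$. This is the step that reduces the boundary of the quotient to the image of the rank $<N$ locus, which has codimension $\geq 2$ because it is cut out by minors. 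Your proposal instead allows for ``pairs with $F$ singular but still semistable'' and defers to an uncarried-out dimension count to absorb them; since the singular locus is a divisor in $\bar{H}_{d_2}^{(N-1)}$, no such count can yield codimension $\geq 2$ without first proving that (essentially all of) this locus is unstable, which is exactly the missing argument. Incidentally, your ``decisive input'' is misstated: for $(N,d_2)=(2,3)$ the space $\mathcal{H}$ is a point, not a curve, since $PGL_2$ acts transitively on triples of distinct points of $\mathbb{P}^1$ --- and this transitivity is precisely the fact you need to close the gap.
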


\begin{proof}
By functoriality of GIT,
$X\hq_{\mathcal{O}(0,1)}G=\mathbb{P}(M_{N,N+1})\hq_{\mathcal{O}(1)}G$. Let us show that the rank $N$ matrices are stable
and that the other matrices are unstable. This will imply (i) because
$\mathbb{P}(M_{N+1,N})\hq_{\mathcal{O}(1)}G$ is then the geometric quotient of the open set
$U$ of rank $N$ matrices by $G$, and because this geometric quotient is the map $U\to \bar{H}_1^{(N)}$ that associates to a matrix its image.
To check it, we use the Hilbert-Mumford criterion (\cite{GIT} Chapter 2 Theorem 2.1).
First, if $M$ is a matrix of rank $<N$, let us choose a basis of $\mathbb{P}^{N-1}$ such that the
last column of $M$ is zero. Then consider the one-parameter subgroup $\lambda$ of $G$ acting
diagonally on this basis with weights $(-1,\dots,-1,N-1)$. A simple calculation shows that
$\mu^{\mathcal{O}(1)}(M,\lambda)=-1<0$, so that $M$ is unstable. Conversely,
if $M$ is a rank $N$ matrix, and $\lambda$ is any non-trivial one-parameter subgroup of $G$, choose a basis
of $\mathbb{P}^{N-1}$ such that $\lambda$ acts diagonally with weights $\lambda_1\leq\dots\leq\lambda_N$;
those weights are not all zero and add up to zero.
A calculation shows that $\mu^{\mathcal{O}(1)}(M,\lambda)=\lambda_N>0$, so that $M$ is indeed stable.

Let us use the Hilbert-Mumford criterion as above to show that
$X^s(\mathcal{O}(\varepsilon,1))=X^{ss}(\mathcal{O}(\varepsilon,1))=\bar{H}_{d_2}^{(N-1)}\times U$.
This implies that $X\hq_{\mathcal{O}(\varepsilon,1)}G$ is
the geometric quotient of $\bar{H}_{d_2}\times U$ by $G$, but this geometric quotient is the
morphism $\bar{H}_{d_2}^{(N-1)}\times U\to \bar{H}_{1,d_2}$ given by $(F,M)\mapsto M(\{F=0\})$, proving (ii). 
First, if $(F,M)$ is such that $M$ has rank $<N$, let us choose a basis of $\mathbb{P}^{N-1}$ such that the
last column of $M$ is zero. Then consider the one-parameter subgroup $\lambda$ of $G$ acting diagonally on this basis
with weights $(\lambda_1,\dots,\lambda_N)=(-1,\dots,-1,N-1)$.
If we denote $\deg_{\lambda}(F)$ the weighted degree of $F$, that is the maximum over
the monomials $M=X_1^{r_1}\dots X_N^{r_N}$ appearing in $F$ of the quantities $\sum_i \lambda_i r_i$, an easy calculation shows:
$\mu^{\mathcal{O}(\varepsilon,1)}((F,M),\lambda)=-1+\varepsilon\deg_{\lambda}(F)\leq 1+\varepsilon d_2(N-1)<0$. Thus, $(F,M)$ is unstable.
Conversely, if $(F,M)$ is such that $M$ has rank $N$ and $\lambda$ is any non-trivial one-parameter subgroup of $G$, choose a basis
of $\mathbb{P}^{N-1}$ such that $\lambda$ acts diagonally with weights $\lambda_1\leq\dots\leq\lambda_N$;
those weights are not all zero and add up to zero.
Then $\mu^{\mathcal{O}(\varepsilon,1)}((F,M),\lambda)=\lambda_N+\varepsilon\deg_{\lambda}(F)\geq\lambda_N+\varepsilon d_2\lambda_1
=\lambda_N-\varepsilon d_2(\lambda_2+\dots+\lambda_N)\geq\lambda_N(1-\varepsilon d_2(N-1))>0$. Thus, $(F,M)$ is stable.

Part (iii) is an immediate consequence of functoriality of GIT.

Let us conclude by proving (iv). If $(F,M)\in X$ is such that $\{F=0\}$ is smooth and $M$ has rank $N$,
$(F,M)\in X^s(\mathcal{O}(\varepsilon,1))$ by (ii)
and $(F,M)\in X^{ss}(\mathcal{O}(1,0))$ by (iii) and because smooth hypersurfaces
are GIT-semi-stable (\cite{GIT} Proposition 4.2). As a consequence,
$(F,M)\in X^s(\mathcal{O}(1,\varepsilon))$, and the geometric quotient
of this locus, that is $H_{1,d_2}$, is an open subset of $X\hq_{\mathcal{O}(1,\varepsilon)}G$.
On the other hand, if $\{F=0\}$ is singular, $(F,M)\notin X^{ss}(\mathcal{O}(1,0))$.
Indeed, since $d_2=2$, or $N=2$ and $d_2=3$, $G$ acts transitively on
$\bar{H}_{d_2}^{(N-1)}\setminus\Delta$, so that $G$-invariant divisors on
$\bar{H}_{d_2}^{(N-1)}$ are necessarily multiples of $\Delta$, and all singular hypersurfaces
are unstable.  It follows that $(F,M)\notin X^{ss}(\mathcal{O}(1,\varepsilon))$ if $\{F=0\}$ is singular. 
Consequently,
the complement of $\{(F,M)\in X| \{F=0\}\textrm{ is smooth and } M\textrm{ has rank }N\}$
in $X^{ss}(\mathcal{O}(1,\varepsilon))$ has codimension $\geq 2$ because 
the condition for a matrix to be of rank $<N$ is given by the vanishing of several minors.
Looking at the image in the quotient, this shows that the complement of
$H_{1,d_2}$ in $X\hq_{\mathcal{O}(1,\varepsilon)}G$ has codimension $\geq 2$.
\end{proof}


As a consequence, Theorem \ref{th2} follows:

\begin{thm}[Theorem \ref{th2}]\label{th2'}
If $N\geq 2$ and $d_1=1$, then: 
\begin{enumerate}
\item[(i)] The variety $\bar{H}_{1,d_2}$ is a Mori dream space and its effective cone is generated by $\mathcal{O}(1,0)$ and $\Delta$. 
\item[(ii)] Unless $d_2=2$, or $N=2$ and $d_2=3$, the last step of the MMP for $\bar{H}_{1,d_2}$ is a fibration over the GIT
moduli space $\mathcal{H}$ of degree $d_2$ hypersurfaces in $\mathbb{P}^{N-1}$.
\item[(iii)] If $d_2=2$, or $N=2$ and $d_2=3$, the last model obtained by the MMP is a
compactification of $H_{1,d_2}$ with a boundary of codimension $\geq2$.
\end{enumerate}
\end{thm}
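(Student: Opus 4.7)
The plan is to realize the MMP for $\bar{H}_{1,d_2}$ as a variation of GIT on the space $X=\bar{H}^{(N-1)}_{d_2}\times\mathbb{P}(M_{N+1,N})$ with the $G=SL_N$-action of Proposition \ref{VGIT}. The $G$-ample cone of $X$ is two-dimensional, spanned by $\mathcal{O}(1,0)$ and $\mathcal{O}(0,1)$, and as the linearization moves from $\mathcal{O}(0,1)$ through $\mathcal{O}(\varepsilon,1)$ towards $\mathcal{O}(1,0)$ and beyond, the GIT quotients successively realize $\bar{H}_1^{(N)}$ (Proposition \ref{VGIT}(i), the trivial MMP direction), then $\bar{H}_{1,d_2}$ (Proposition \ref{VGIT}(ii), our starting point), then a chain of birational models, and finally either $\mathcal{H}$ (Proposition \ref{VGIT}(iii)) or, in the special cases, the compactification of Proposition \ref{VGIT}(iv).

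For (i), the Mori dream space property follows because $X$ is a product of projective spaces, hence a Mori dream space, and GIT quotients of Mori dream spaces by reductive groups remain Mori dream spaces: the Cox ring of $X\hq_L G$ is (an appropriate piece of) the invariant subring $\Cox(X)^G$, which is finitely generated since $\Cox(X)$ is and $G$ is reductive. For the effective cone, $\mathcal{O}(1,0)$ is effective as the pullback of a hyperplane from the base of $\bar{H}_{1,d_2}\to\bar{H}_1^{(N)}$; the class of $\Delta$ is effective by definition. Extremality will follow by identifying these two classes, up to positive multiples, with the pullbacks of the two boundary rays $\mathcal{O}(0,1)$ and $\mathcal{O}(1,0)$ of the $G$-ample cone of $X$: these are precisely the semi-ample classes defining the two extreme GIT quotients, so they must bound the effective cone of $\bar{H}_{1,d_2}$.

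For (ii) and (iii), we invoke the Dolgachev--Hu--Thaddeus theory \cite{VGIT}: the $G$-ample cone admits a finite wall-and-chamber decomposition, and each wall-crossing induces either a flip or a divisorial contraction of GIT quotients, while reaching a boundary wall of the cone produces a fibration onto the next quotient. Walking from the chamber containing $\mathcal{O}(\varepsilon,1)$ (corresponding to $\bar{H}_{1,d_2}$) towards $\mathcal{O}(1,0)$ thus runs the non-trivial MMP. In the generic case, $\mathcal{H}=X\hq_{\mathcal{O}(1,0)}G$ is positive-dimensional and the last model fibers over it, giving (ii). In the exceptional cases, $G$ acts transitively on the smooth locus of $\bar{H}_{d_2}^{(N-1)}$, so $\mathcal{H}$ is a point; one must then cross the last wall and perturb to $\mathcal{O}(1,\varepsilon)$, and Proposition \ref{VGIT}(iv) identifies this final quotient as the required codimension $\geq 2$ compactification of $H_{1,d_2}$, giving (iii).

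The main obstacle will be verifying that the VGIT really implements the MMP without distortion: every wall crossing must be a genuine birational step (not a pseudo-isomorphism or simultaneous collapse), the resulting models must be $\mathbb{Q}$-factorial, and the semi-ample line bundles pulled back from boundary walls must genuinely descend to the MMP-contraction classes on $\bar{H}_{1,d_2}$. This is essentially a matter of comparing the Hu--Keel description of the VGIT with the intrinsic birational geometry of $\bar{H}_{1,d_2}$, together with the explicit semi-stability calculations already performed in Proposition \ref{VGIT}.
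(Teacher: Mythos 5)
Your proposal follows essentially the same route as the paper: realizing the MMP for $\bar{H}_{1,d_2}$ as the variation of GIT on $X=\bar{H}^{(N-1)}_{d_2}\times\mathbb{P}(M_{N+1,N})$, deducing the Mori dream space property from the fact that a GIT quotient of a Mori dream space is again one, and reading off the extremality of $\mathcal{O}(1,0)$ and $\Delta$ from the two ends of the VGIT chamber decomposition, with the same case split between $\dim\mathcal{H}>0$ (last step a fibration over $\mathcal{H}$) and $\mathcal{H}$ a point (last step the divisorial contraction onto $X\hq_{\mathcal{O}(1,\varepsilon)}G$). This matches the paper's argument, which likewise delegates the wall-crossing bookkeeping to the general theory of variation of GIT and to the semistability computations of Proposition \ref{VGIT}.
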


\begin{proof}
The variety $\bar{H}_{1,d_2}$ is a GIT quotient of a Mori dream space by Proposition \ref{VGIT} (ii). It follows that it
is a Mori dream space by\cite{GITMDS} Theorem 1.1. Moreover,
the general theory of variation of GIT \cite{VGIT} shows that the GIT quotients of $X$ by $G$ when the polarization varies
fit together to form a sequence of flips and contractions, realizing the MMP for $\bar{H}_{1,d_2}$.

Let us distinguish two cases. Suppose that we do not have $d_2=2$, or $N=2$ and $d_2=3$, so that $\dim(\mathcal{H})>0$.
Then, $\mathcal{H}$ is the last model
of the  MMP for $\bar{H}_{1,d_2}$ (Proposition \ref{VGIT} (iii)). In particular, since $\dim(\mathcal{H})<\dim(\bar{H}_{1,d_2})$,
the last step of this MMP is a fibration. Moreover, since 
$\Delta$ is the pull-back of the discriminant of $\mathcal{H}$ by the rational map $\bar{H}_{1,d_2}\dashrightarrow \mathcal{H}$,
this fibration is induced by $\mathcal{O}(\Delta)$. This shows that $\Delta$ is an extremal ray of
the effective cone of $\bar{H}_{1,d_2}$, the other
ray being obviously $\mathcal{O}(1,0)$.

Suppose on the contrary that $d_2=2$, or $N=2$ and $d_2=3$: in these cases, $\mathcal{H}$ is a point.
Then the VGIT still realizes the MMP for $\bar{H}_{1,d_2}$ but the last model of this MMP is now $X\hq_{\mathcal{O}(1,\varepsilon)}G$
for $0<\varepsilon\ll 1$ (Proposition \ref{VGIT} (iv)). Since $X\hq_{\mathcal{O}(1,\varepsilon)}G$ is a compactification of $H_{1,d_2}$
with a boundary of codimension $\geq2$,
the last step of this MMP is a divisorial contraction contracting $\Delta$.
This shows that $\Delta$ is an extremal ray of the effective cone of $\bar{H}_{1,d_2}$, the other
ray being obviously $\mathcal{O}(1,0)$.
\end{proof}

\begin{rem}
This construction of the MMP for $\bar{H}_{1,d_2}$ does not allow to obtain an explicit description of all intermediate models
(for instance, it is difficult in general to describe the GIT-stable hypersurfaces).
However, the reader may check what follows as an exercise in GIT.

The union of the flipped loci in $\bar{H}_{1,d_2}$ is the set of complete intersections
that are GIT-unstable as hypersurfaces in $\mathbb{P}^{N-1}$.
The flipped loci are unions of strata of the Hesselink stratification
of this unstable locus (see \cite{Hesselink} Paragraph 6 or \cite{Kirwan} Chapter 12).
In the particular case when $N=2$ and $d_2=3$, the MMP first flips the locus of complete
intersections supported on one single point, and then contracts $\Delta$.
When $d_2=2$, the MMP first flips the locus of quadrics that are double linear spaces,
then the locus of quadrics that are union of two linear spaces, then successively
the loci of quadrics of higher and higher rank, until it contracts $\Delta$.
\end{rem}

\begin{rem}
Suppose that $d_2=2$, and that either the characteristic is not $2$ or that $N$ is even.
Then it is easy to construct by hand the compactification of $H_{1,2}$
with small boundary that is the last step of the MMP. Indeed, the dual of
a smooth complete intersection is then a quadric cone, and
duality induces a rational map $\bar{H}_{1,2}\dashrightarrow \bar{H}_2^{(N)}$
that realizes an isomorphism between $H_{1,2}$ and the set of rank $N$ quadrics.
The required compactification is the set of quadrics of rank $\leq N$ in $\bar{H}_2^{(N)}$. 

However, this construction doesn't work in characteristic $2$ when $N$ is odd, due to the bad behaviour of duality:
the dual of a smooth complete intersection in this case is a double hyperplane.
\end{rem}

\section{Punctual complete intersections}\label{part3}
In this section, we set $N=1$. As it will be important to take into account
the case $d_1=1$, keep in mind the conventions made in \ref{conv11}.

The class of the discriminant in $\Pic(\bar{H}_{d_1,d_2})$ has been calculated in general in
\cite{Ooldeg} Exemple 1.11. When $N=1$, this specializes to the classical formula for the
degrees of the resultant, that we recall for later use: $\mathcal{O}(\Delta)=\mathcal{O}(d_2,d_1)$.

\subsection{Blowing-up $\bar{H}_{d_1,d_2}$}

Here, we will construct and describe a suitable blow-up of $\bar{H}_{d_1,d_2}$. 
For $1\leq k\leq d_1-1$, we consider the multiplication map
$\varphi_k:\bar{H}_{d_1-k}\times\bar{H}_{k,d_2-d_1+k}\to\bar{H}_{d_1,d_2}$ defined by
$\varphi_k(P,[L,H])=[PL,PH]$. We denote by $W_k$ the image of $\varphi_k$ with its reduced structure. In particular, $W_{d_1-1}=\Delta$.
Let $\hat{H}_{d_1,d_2}$ be the scheme obtained by blowing up first $W_1$, then
the strict transform of $W_2$, $ \dots $, and lastly the strict transform of $W_{d_1-1}$.
Let $E_1,\dots, E_{d_1-1}$ be
the exceptional divisors of these blow-ups.

The fact, claimed in the introduction,
that $\hat{H}_{d_1,d_2}$ might have been defined as
the closure of $H_{d_1,d_2}$ in the appropriate Hilbert scheme will only be proven
in the last paragraph \ref{idhilb} of this section.

\begin{notation}
Note that the dependence on $d_1$ and $d_2$ of $\varphi_k$ is not explicit in the notation.
The context will always make clear what morphism is intended.
Moreover, we will still denote by $\varphi_k$ morphisms induced by $\varphi_k$
after some blow-ups have been performed.
A similar remark holds for the loci $W_k$ and $E_k$: their strict transforms
will still be denoted by $W_k$ and $E_k$ after some blow-ups have been performed.

In a similar abuse of notation, we will still write $\mathcal{O}(l_1,l_2)$ for
the pull-back of $\mathcal{O}(l_1,l_2)$ on any blow-up of $\bar{H}_{d_1,d_2}$.

It will be sometimes easier to work on $\bar{H}_{d_1}\times\bar{H}_{d_2}$ instead
of $\bar{H}_{d_1,d_2}$. For this reason, we introduce the morphisms 
$\tilde{\varphi}_k:\bar{H}_{d_1-k}\times\bar{H}_{k}\times\bar{H}_{d_2-d_1+k}\to\bar{H}_{d_1}\times\bar{H}_{d_2}$
defined by $\tilde{\varphi}_k(P,L,H)=(PL,PH)$, and the loci $\tilde{W}_k=\Ima(\tilde{\varphi}_k)$. Notice that
$\tilde{W}_0=\{(F,G)\mid F|G \}$. By convention, $W_0=\varnothing$.

The blow-up of $W_k$ in a space $X$ will be denoted by $\beta_k:\beta_kX\to X$.
Moreover, the notation, $\beta_l^k$ will denote $\beta_k\dots\beta_l$. For instance,
$\hat{H}_{d_1,d_2}=\beta_1^{d_1-1}\bar{H}_{d_1,d_2}$.

Finally, to shorten notations, we will write $S_l$ instead of $H^0(\mathbb{P}^1,\mathcal{O}(l))$. 
\end{notation}

The goal of this paragraph is to prove:

\begin{prop}\label{blowup}
\hspace{1em}

\begin{enumerate}
\item[(i)] The variety $\hat{H}_{d_1,d_2}$ is smooth and the $(E_k)_{1\leq k\leq d_1-1}$ form a strict normal crossing divisor in it.
\item[(ii)] For $1\leq k\leq d_1-1$, there is a natural isomorphism $E_k\simeq \hat{H}_{d_1-k,d_2+k}\times\hat{H}_{k,d_2-d_1+k}$.
The two natural projections will be denoted by $p_1$ and $p_2$.
\item[(iii)] If $j<k$, $E_j|_{E_k}=p_2^*E_j$.
\item[(iv)] If $j>k$, $E_j|_{E_k}=p_1^*E_{j-k}$.
\item[(v)] $\mathcal{O}(E_k)|_{E_k}=p_1^*\mathcal{O}(1,-1)\otimes p_2^*\mathcal{O}(1,1)(-E_1-\dots-E_{k-1})$.
\item[(vi)] $\mathcal{O}(l_1,l_2)|_{E_k}=p_1^*\mathcal{O}(l_1+l_2,0)\otimes p_2^*\mathcal{O}(l_1,l_2)$.
\end{enumerate}
\end{prop}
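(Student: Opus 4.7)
The plan is to prove the proposition by strong induction on $d_1$. The base case $d_1=1$ is immediate: no blow-up is performed, $\hat{H}_{1,d_2}=\bar{H}_{1,d_2}\simeq\mathbb{P}^1$ by Convention \ref{conv11}, and items (ii)--(vi) are vacuous. For the inductive step, assuming the proposition for all $d_1'<d_1$, I would decompose the iterated blow-up as $\beta_2^{d_1-1}\circ\beta_1$ and analyze the two stages in turn, focusing first on the structure of $E_1$.

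The first stage is the analysis of $\beta_1$. I would prove that $\varphi_1:\bar{H}_{d_1-1}\times\bar{H}_{1,d_2-d_1+1}\to\bar{H}_{d_1,d_2}$ is a closed immersion with image $W_1$. Point-injectivity comes from unique factorization: for $[F,G]\in W_1$ one has $\deg\gcd(F,G)=d_1-1$ (a higher gcd would force $F\mid G$, impossible in $\bar{H}_{d_1,d_2}$), and then $P=\gcd(F,G)$, $L=F/P$, and $H=G/P\bmod L$ are all determined; well-definedness uses that $[L,H]\in\bar{H}_{1,d_2-d_1+1}$ automatically satisfies $L\nmid H$. Injectivity of the differential reduces to a local Jacobian computation that must be carried out with care near points where $L\mid P$. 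Once $W_1$ is known to be smooth of codimension $d_1-1$, $\beta_1\bar{H}_{d_1,d_2}$ is smooth with $E_1\simeq\mathbb{P}(N^*_{W_1/\bar{H}_{d_1,d_2}})$. I would then compute this conormal bundle via the Euler sequences of the projective bundles involved, expecting a natural isomorphism
\[
E_1\simeq\bar{H}_{d_1-1,d_2+1}\times\bar{H}_{1,d_2-d_1+1}
\]
over $W_1$, where geometrically a normal direction at $(P,[L,H])$ corresponds to a class $[\Gamma]\in\mathbb{P}(S_{d_2+1}/\langle P\rangle)$---exactly the fiber of $\bar{H}_{d_1-1,d_2+1}$ over $P$.

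The second stage is the restriction of $\beta_2^{d_1-1}$ to $E_1$. The key geometric claim is that under the identification above, for $2\leq k\leq d_1-1$ the strict transform of $W_k$ meets $E_1$ precisely in $W'_{k-1}\times\bar{H}_{1,d_2-d_1+1}$, where $W'_j\subset\bar{H}_{d_1-1,d_2+1}$ is the analog of $W_j$ for the pair $(d_1-1,d_2+1)$. Intuitively, a first-order degeneration from $W_1$ toward $W_k$ translates, after factoring out the already-recorded $P$, into the condition that the normal-direction polynomial $\Gamma$ shares a common factor of degree $d_1-k$ with $P$. Granting this, invoking the induction hypothesis on $(d_1-1,d_2+1)$ shows that $\beta_2^{d_1-1}$ restricted to $E_1$ realizes exactly the blow-up sequence producing $\hat{H}_{d_1-1,d_2+1}$ on the first factor, so $E_1$ ultimately becomes $\hat{H}_{d_1-1,d_2+1}\times\hat{H}_{1,d_2-d_1+1}$. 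The identification of $E_k$ for $k\geq 2$ is obtained analogously: the strict transform of $W_k$ in $\beta_1^{k-1}\bar{H}_{d_1,d_2}$ is isomorphic to $\bar{H}_{d_1-k}\times\hat{H}_{k,d_2-d_1+k}$ (since its blow-ups are along the substrata $\bar{H}_{d_1-k}\times W'_j$, to which the induction hypothesis applies on the second factor), and the same conormal analysis identifies $E_k$, after all remaining blow-ups, with $\hat{H}_{d_1-k,d_2+k}\times\hat{H}_{k,d_2-d_1+k}$. This simultaneously yields (i), as each blow-up is along a smooth center meeting the existing exceptional divisor transversally.

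Items (iii)--(vi) then follow from tracing through standard blow-up formulas together with the above structural identifications. Formula (v) is the Euler-sequence formula for the exceptional divisor of a blow-up of a smooth center, with the $-E_1-\dots-E_{k-1}$ correction encoding the earlier strict-transform blow-ups. Formula (vi) is obtained by pulling $\mathcal{O}(l_1,l_2)$ back through $\varphi_k$, using a relation of the form $\varphi_k^*\mathcal{O}(1,0)=\mathcal{O}(1;1,0)$ and the projective bundle structures. Formulas (iii)--(iv) follow from the intersection pattern of strict transforms established above. The principal obstacle is the first stage: proving that $\varphi_1$ is everywhere immersive and computing the conormal bundle of $W_1$ explicitly to confirm the identification of $E_1$. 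Once this foundational piece is in place, the inductive machinery and the line bundle computations are essentially routine bookkeeping.
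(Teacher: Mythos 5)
Your outline follows the same route as the paper (induction on $d_1$, identification of the strict transforms of the $W_k$ as products, normal-bundle computation, then the standard blow-up formulas), and your first stage is essentially correct: for $k=1$ the condition $\gcd(P,L,H)\neq 1$ forces $L\mid H$, which is excluded for a point of $\bar{H}_{1,d_2-d_1+1}$, so $\varphi_1$ is a closed immersion everywhere and $E_1$ is identified as you describe. But you have located the principal obstacle in the wrong place, and the step you dismiss as ``analogous'' is where the real work lies. For $k\geq 2$ the map $\varphi_k:\bar{H}_{d_1-k}\times\bar{H}_{k,d_2-d_1+k}\to\bar{H}_{d_1,d_2}$ is neither injective nor immersive along the locus $\gcd(L,H)\neq 1$: for instance $(P,[P'M,P'H_0])$ and $(P',[PM,PH_0])$ have the same image $[PP'M,PP'H_0]$, and the paper computes that the fibers over the deeper strata have cardinality $\binom{d_1-k}{d_1-j}$. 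Consequently $W_k$ is singular along $W_{k-1}$, and the assertion that its strict transform under $\beta_1^{k-1}$ ``is $\bar{H}_{d_1-k}\times\hat{H}_{k,d_2-d_1+k}$ since its blow-ups are along the substrata $\bar{H}_{d_1-k}\times W'_j$'' is exactly the statement that needs proof, not a consequence of the induction hypothesis. What is required, and what Lemma \ref{immdef} and Proposition \ref{blowuprec} of the paper supply, is: (a) that a kernel vector of $d\varphi_k$ at a point with $[L,H]\in W_h\setminus W_{h-1}$ is tangent to $\bar{H}_{d_1-k}\times W_h$, so that the earlier blow-ups kill it; (b) that the squares relating $\varphi_k$ before and after each blow-up are cartesian, so that the source of $\varphi_k$ transforms into $\bar{H}_{d_1-k}\times\beta_1^{k-1}\bar{H}_{k,d_2-d_1+k}$ and the resulting map is a closed immersion onto the strict transform; and (c) the compatibilities of the centers with the exceptional divisors already created, which is what (iii), (iv) and the normal crossing claim in (i) rest on. None of these appear in your proposal.

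A second, smaller gap: the normal bundle identification $p_1^*\mathcal{E}^{d_2+k}_{d_1-k}\otimes\mathcal{O}(1,1,1)$ (via $(F,G)\mapsto LG-HF$) is only established fiberwise where $\gcd(P,L,H)=1$; to get the formula in (v) on all of $E_k$ one must extend the isomorphism of vector bundles over the codimension $\geq 2$ complement using normality of the (now smooth) source. This is a short argument, but it is needed, since the fiberwise map degenerates exactly where $\varphi_k$ fails to be immersive.
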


One of the difficulties of the proof is that $\varphi_k$ becomes an immersion only after the previous strata have been blown up.
The following lemma describes
the situation. In this lemma, $\mathcal{E}^{d_2}_{d_1}$ denotes the vector bundle on $\bar{H}_{d_1}$
whose fiber over $\langle F\rangle$ is
$S_{d_2}/\langle F\rangle$, and hence whose projectivization is $\bar{H}_{d_1,d_2}$.
Moreover, when $h\leq k$, we will consider the following commutative diagram, in which $\mu$ denotes multiplication:

\begin{equation}\label{diagmultmult}
\begin{split}
\xymatrix{
\bar{H}_{d_1-k}\times \bar{H}_{k,d_2-d_1+k}\ar[r]^{\hspace{1.3em}\varphi_k}
&  \bar{H}_{d_1,d_2}    \\
\bar{H}_{d_1-k}\times\bar{H}_{k-h}\times\bar{H}_{h,d_2-d_1+h}\ar[r]^{\hspace{1.3em}(\mu,\Id)}\ar[u]_{(\Id,\varphi_h)} 
&\bar{H}_{d_1-h}\times\bar{H}_{h,d_2-d_1+h} \ar[u]_{\varphi_h}
}
\end{split}
\end{equation}

\begin{lem}\label{immdef} Let $1\leq k\leq d_1-1$.
\begin{enumerate} 
\item[(i)] The map $\varphi_k:\bar{H}_{d_1-k}\times\bar{H}_{k,d_2-d_1+k}\to\bar{H}_{d_1,d_2}$ is immersive on the open locus
$U=\{(P,[L,H])\in\bar{H}_{d_1-k}\times\bar{H}_{k,d_2-d_1+k}|\gcd(P,L,H)=1\}$.
\item[(ii)] Let $v\in\Ker ((d\varphi_k)_{(P,[L,H])})$ be non-zero, with $[L,H]\in W_h\setminus W_{h-1}$. Then $h<k$, and
$v$ is tangent to $\bar{H}_{d_1-k}\times W_h$.
\item[(iii)] The normal bundle to $\varphi_k|_U$ is $(p_1^*\mathcal{E}^{d_2+k}_{d_1-k}\otimes\mathcal{O}(1,1,1))|_U$.
\end{enumerate}
\end{lem}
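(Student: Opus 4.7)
The plan is to lift everything to the auxiliary map
\[
\tilde{\varphi}_k:\bar{H}_{d_1-k}\times \bar{H}_k\times \bar{H}_{d_2-d_1+k}\to \bar{H}_{d_1}\times\bar{H}_{d_2},\quad (P,L,H)\mapsto(PL,PH),
\]
since both $\varphi_k$'s source and target arise from those of $\tilde{\varphi}_k$ by quotient constructions (projective bundle / $\mathbb{G}_a$-action), and this avoids working directly with the tangent space of the projective bundle $\bar{H}_{k,d_2-d_1+k}$. A kernel vector of $d\tilde{\varphi}_k$ at $(P,L,H)$ is represented by $(P',L',H')\in S_{d_1-k}\oplus S_k\oplus S_{d_2-d_1+k}$ satisfying $P'L+PL'=\lambda PL$ and $P'H+PH'=\mu PH$, modulo $(P',L',H')\sim(P'+\alpha P,L'+\beta L,H'+\gamma H)$. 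Using $\beta,\gamma$ to normalize $\lambda=\mu=0$ leaves equations $P'L=-PL'$, $P'H=-PH'$ with a residual one-parameter freedom $\alpha$. Working in $\mathbbm{k}(X_0,X_1)$, one sees the system forces $(P',L',H')=(-fP,fL,fH)$ for some $f\in\mathbbm{k}(X_0,X_1)$, and polynomiality plus the degree constraint $\deg(fP)\leq d_1-k$ force $f=a'/d$ with $d=\gcd(P,L,H)$ and $a'\in S_{\deg d}$. This kernel has dimension $\deg d +1$, and passing to $d\varphi_k$ quotients by the residual 1-dimensional freedom. On $U$ we have $d=1$, so $a'\in\mathbbm{k}$ lies in the residual direction and $\ker d\varphi_k=0$, proving (i).

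For (ii), a non-zero kernel vector forces $\deg d\geq 1$. If $[L,H]\in W_h\setminus W_{h-1}$, write $L=e\tilde{L}$, $H=e\tilde{H}$ with $\deg e=k-h$ and $\gcd(\tilde{L},\tilde{H})=1$, so $d=\gcd(P,e)$; then $\deg d\geq 1$ requires $\deg e\geq 1$, i.e.\ $h<k$. Setting $b:=a'e/d$, which lies in $S_{k-h}$ because $d\mid e$, the components $(L',H')=(a'L/d,a'H/d)$ become $(b\tilde{L},b\tilde{H})$. This is exactly the image of the tangent vector $(b,0,0)$ in $T_{(e,[\tilde{L},\tilde{H}])}(\bar{H}_{k-h}\times\bar{H}_{h,d_2-d_1+h})$ under the differential of the parameterization $\varphi_h':\bar{H}_{k-h}\times\bar{H}_{h,d_2-d_1+h}\to W_h\subset\bar{H}_{k,d_2-d_1+k}$. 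Since $\gcd(e,\tilde{L},\tilde{H})=\gcd(e,1)=1$, part (i) applied inductively shows $\varphi_h'$ is immersive at $(e,[\tilde{L},\tilde{H}])$, so this image is genuinely tangent to $W_h$, giving (ii).

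For (iii), I construct the normal bundle map explicitly via the polynomial combination
\[
\psi:S_{d_1}\oplus S_{d_2}\to S_{d_2+k},\qquad (A,B)\mapsto HA-LB.
\]
The computation $\psi(P'L+PL',P'H+PH')=P(HL'-LH')$ shows $\psi$ kills the image of $d\tilde{\varphi}_k$ modulo $P\cdot S_{d_2-d_1+2k}=\langle P\rangle_{d_2+k}$, so $\psi$ descends to $\bar{\psi}:T_{(PL,PH)}/\Ima(d\tilde{\varphi}_k)\to S_{d_2+k}/\langle P\rangle$. On $U$ both sides have dimension $d_1-k$ (the cokernel dimension follows from the kernel computation above plus the dimension formula; the target dimension is a direct computation in $N=1$), and surjectivity of $\bar{\psi}$ reduces to showing $(P,L,H)_{d_2+k}=S_{d_2+k}$, which holds because $\gcd(P,L,H)=1$ makes the ideal $(P,L,H)\subset\mathbbm{k}[X_0,X_1]$ irrelevant. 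Globalizing $\bar{\psi}$ as a morphism of vector bundles requires tracking the twists: the multiplications by $L$ and by $H$ contribute factors of $\langle L\rangle^{-1}$ and $\langle H\rangle^{-1}$, and the variable quotient $S_{d_2+k}/\langle P\rangle$ contributes $\langle P\rangle^{-1}$, giving altogether $p_1^*\mathcal{E}^{d_2+k}_{d_1-k}\otimes \mathcal{O}(1,1,1)$.

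The main obstacle I expect is the globalization step in (iii): assembling the pointwise map $\bar{\psi}$ into a morphism of vector bundles over $\bar{H}_{d_1-k}\times\bar{H}_{k,d_2-d_1+k}$ with the specific twists claimed requires careful bookkeeping of the Euler-type sequences associated to the projective bundle $\bar{H}_{k,d_2-d_1+k}\to\bar{H}_k$ and to $\bar{H}_{d_1,d_2}\to\bar{H}_{d_1}$. Parts (i) and (ii) reduce to the explicit rational-function computation above, which is elementary once one commits to working with $\tilde{\varphi}_k$ and to carrying the equivalences for the lifts carefully.
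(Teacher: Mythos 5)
Your overall strategy is the same as the paper's: lift everything to $\tilde\varphi_k$, write out the differential explicitly, identify its kernel as the vectors $(a'P/d,-a'L/d,-a'H/d)$ with $d=\gcd(P,L,H)$, exhibit a non-zero kernel vector as the image of $(b,0,0)$ under the differential of the smaller multiplication map onto $W_h$ for (ii), and realize the normal bundle via the pairing $(A,B)\mapsto HA-LB$ landing in $S_{d_2+k}/\langle P\rangle$. Parts (i) and (ii) are correct as you argue them; the paper produces essentially the same witness for tangency to $W_h$, and like you it relegates the passage from $\tilde\varphi_k$ to $\varphi_k$ to the quotient description of source and target.

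The genuine gap is in (iii), at the step ``surjectivity of $\bar\psi$ reduces to $(P,L,H)_{d_2+k}=S_{d_2+k}$, which holds because the ideal is irrelevant.'' Irrelevance only gives $(P,L,H)_l=S_l$ for $l\gg 0$; it says nothing about the specific degree $l=d_2+k$ (compare $(X_0^m,X_1^m)$, which is irrelevant but not surjective in degrees between $m$ and $2m-2$). Nor can you fall back on the standard bound for a length-two regular sequence, since no two of $P,L,H$ need be coprime --- only the triple gcd is $1$. This degree count is precisely where the paper does the real work of part (iii): it sets $\Lambda=\gcd(L,H)$, notes $(\Lambda,P)$ is a regular sequence whose Koszul sequence twisted by $\mathcal{O}(d_2+k)$ has vanishing $H^1$, so every element of $S_{d_2+k}$ is $AP+B\Lambda$, and then expresses the coefficient $B$ in terms of $L/\Lambda$ and $H/\Lambda$ by a second Koszul argument. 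You need this (or an equivalent explicit degree estimate) to close the argument. By contrast, the globalization step you flag as the expected obstacle is only a matter of detail, not of a missing idea: the paper sheafifies the pointwise maps via the Euler sequences, and your bookkeeping of the twists $\mathcal{O}(1,1,1)$ and $p_1^*\mathcal{E}^{d_2+k}_{d_1-k}$ agrees with it.
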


\begin{proof}
We will prove analogous statements for the map $\tilde{\varphi}_k$.
It is easy to deduce the corresponding statements for $\varphi_k$ using
the rational map $\bar{H}_{d_1}\times\bar{H}_{d_2}\dashrightarrow\bar{H}_{d_1,d_2}$. 

Identifying $T_{\langle F\rangle}\bar{H}_d$ with $S_d/\langle F\rangle$, we see that:
\begin{alignat*}{2}
(d\tilde{\varphi}_k)_{(P,L,H)}:
S_{d_1-k}/\langle P\rangle\oplus & S_k/\langle L\rangle\oplus S_{d_2-d_1+k}/\langle H\rangle 
& \to  &  S_{d_1}/\langle PL\rangle\oplus S_{d_2}/\langle PH\rangle  \\
      &(A,B,C)&\mapsto  & (AL+BP, AH+CP).
\end{alignat*}
Let $\Pi=\gcd(P,L,H)$ be of degree $d$, and let $P',L',H'$ be such that $P=\Pi P'$, $L=\Pi L'$ and $H=\Pi H'$.
Using the formula above, it is straightforward to check that, if
$\Pi'\in S_{d}$, $(\Pi'P',-\Pi'L',-\Pi'H')\in\Ker(d\tilde{\varphi}_k)_{(P,L,H)}$.

On the other hand, if $(A,B,C)\in \Ker(d\tilde{\varphi}_k)_{(P,L,H)}$, we see that $PL|AL+BP$ and $PH|AH+CP$,
hence that $P'$ divides $AH'$, $AL'$ and of course $AP'$.
Consequently $P'|A$; this implies that $(A,B,C)$ is of the form described above.
In particular, if $\Pi=1$, $(d\tilde{\varphi}_k)_{(P,L,H)}$ is injective,
proving (i).

Let $\Gamma=\gcd(L',H')$, $L'=\Gamma L''$, $H'=\Gamma H''$, and $h=\deg(L'')$. By the above, if 
 $v\in\Ker(d\tilde{\varphi}_h)_{(P,L,H)}$ is non-zero, then $h<k$ and $v$ is of the form $(\Pi'P',-\Pi'L',-\Pi'H')$.
One sees that (ii) holds by checking that: $$v=d(\Id,\tilde{\varphi}_h)_{(Q,\Pi\Gamma,L'',H'')}(\Pi'P',-\Pi'\Gamma,0,0).$$

The Euler exact sequence realizes $T(\bar{H}_{d_1}\times\bar{H}_{d_2})$ as a natural quotient of
$S_{d_1}\otimes\mathcal{O}(1,0)\oplus S_{d_2}\otimes\mathcal{O}(0,1)$.
Restricting it to $\bar{H}_{d_1-k}\times\bar{H}_{k}\times\bar{H}_{d_2-d_1+k}$,
we identify $\tilde{\varphi}_k^*T(\bar{H}_{d_1}\times\bar{H}_{d_2})$ with a natural quotient of 
$S_{d_1}\otimes\mathcal{O}(1,1,0)\oplus S_{d_2}\otimes\mathcal{O}(1,0,1)$.
Now, if $(P,L,H)\in \bar{H}_{d_1-k}\times\bar{H}_{k}\times\bar{H}_{d_2-d_1+k}$, we have a linear map
$S_{d_1}\oplus S_{d_2}\to S_{d_2+k}$ given by
$(F,G)\mapsto LG-HF$, and these maps sheafify to induce a morphism of sheaves
$S_{d_1}\otimes\mathcal{O}(1,1,0)\oplus S_{d_2}\otimes\mathcal{O}(1,0,1)\to
S_{d_2+k}\otimes\mathcal{O}(1,1,1)$ on $ \bar{H}_{d_1-k}\times\bar{H}_{k}\times\bar{H}_{d_2-d_1+k}$.

Composing with the quotient map $S_{d_2+k}\otimes\mathcal{O}(1,1,1)\to p_1^*\mathcal{E}^{d_2+k}_{d_1-k}\otimes\mathcal{O}(1,1,1)$,
and noticing that, using the explicit description of $d\tilde{\varphi}_k$ above, the induced morphism factors through the normal
bundle $N_{\tilde{\varphi}_k}$, we obtain
a morphism of sheaves $\psi:N_{\tilde{\varphi}_k}\to p_1^*\mathcal{E}^{d_2+k}_{d_1-k}\otimes\mathcal{O}(1,1,1)$. 

To prove (iii), we need to check that $\psi$ is an isomorphism over
the locus where $\Pi=1$. Since it is a morphism between vector bundles of the same rank $d_1-k$,
it suffices to show $\psi$ induces a surjection at the level of fibers.
To do so, fix $(P,L,H)$ such that $\Pi=1$. The construction of $\psi$ shows that $\psi_{(P,L,H)}$ is induced by the linear map
$S_{d_1}\oplus S_{d_2}\to S_{d_2+k}/\langle Q\rangle$ given by
$(F,G)\mapsto LG-HF$. Thus, it suffices to prove that this map is surjective, i.e.
that every degree $d_2+k$ polynomial is a combination of $P,L,H$.

To do so, let $\Lambda=\gcd(L,H)$ be of degree $\lambda$,
and write $L=\Lambda L_1$ and $H=\Lambda H_1$. Notice that, since $\Pi=1$, $\gcd(\Lambda,P)=1$.
Thus, $(\Lambda,P)$ is a regular sequence on $\mathbb{P}^1$, giving rise to a Koszul exact sequence on $\mathbb{P}^1$:
$0\to\mathcal{O}(k-d_1-\lambda)\to\mathcal{O}(-\lambda)\oplus \mathcal{O}(k-d_1)\to\mathcal{O}\to 0$. Tensoring
by $\mathcal{O}(d_2+k)$, and taking global sections, one obtains a short exact sequence by vanishing of the appropriate $H^1$.
The surjectivity in this exact sequence shows precisely that
every degree $d_2+k$ polynomial may be written as a combination of $P$ and $\Lambda$.
It remains to express the coefficient of $\Lambda$, that is a degree
$d_2+k-\lambda$ polynomial, as a combination of $L_1$ and $H_1$. This is done in a similar fashion
 using the Koszul exact sequence associated to the regular sequence $(L_1,H_1)$ on $\mathbb{P}^1$.
\end{proof}

The proof of Proposition \ref{blowup} will proceed by induction, taking advantage of the inductive descriptions of the exceptional divisors.
Let us state the precise proposition that we will prove, whose
statement is adapted for an inductive proof, and from which Proposition \ref{blowup} will follow easily.

\begin{prop}\label{blowuprec}
\hspace{1em}
\begin{enumerate}
\item[(i)] There is a closed immersion $\varphi_k:\bar{H}_{d_1-k}\times \beta_1^{k-1}\bar{H}_{k,d_2-d_1+k}\to \beta_1^{k-1}\bar{H}_{d_1,d_2}$.
Its normal bundle is
$p_1^*\mathcal{E}^{d_2+k}_{d_1-k}(1)\otimes p_2^*(\mathcal{O}(1,1)(-E_1-\dots-E_{k-1}))$.
\item[(ii)] The variety $\beta_1^k\bar{H}_{d_1,d_2}$ is smooth as a blow-up of a smooth subvariety in a smooth variety.
Moreover, $E_k\simeq \bar{H}_{d_1-k,d_2+k}\times \hat{H}_{k,d_2-d_1+k}$.
\item[(iii)] If $j>k$, there is a cartesian diagram, in which $\mu$ denotes multiplication:
\begin{equation}\label{diagmultmap}
\xymatrix{
\mathcal{X}:=\bar{H}_{d_1-j}\times \beta_1^{k-1}\bar{H}_{j,d_2-d_1+j}\ar[r]^{\hspace{1.3em}\varphi_j}
&   \beta_1^{k-1}\bar{H}_{d_1,d_2}=:\mathcal{Z}    \\
\mathcal{W}:=\bar{H}_{d_1-j}\times\bar{H}_{j-k}\times\hat{H}_{k,d_2-d_1+k}\ar[r]^{\hspace{1.3em}(\mu,\Id)}\ar[u]_{(\Id,\varphi_k)} 
&\bar{H}_{d_1-k}\times\hat{H}_{k,d_2-d_1+k}=:\mathcal{Y}  \ar[u]_{\varphi_k}
}
\end{equation}
Moreover, if $z\in\mathcal{Z}$ is in the image of $\varphi_{k}$, $\varphi_j^{-1}(z)$ is finite of degree $\binom{d_1-k}{d_1-j}$.
\item[(iv)] If $k<j\leq d_1-1$ and $1\leq h\leq k$, there is a cartesian diagram:
\begin{equation}\label{diagdivexc}
\xymatrix{
\bar{H}_{d_1-j}\times \beta_1^{k-1}\bar{H}_{j,d_2-d_1+j}\ar[r]^{\hspace{2.7em}\varphi_j}
&   \beta_1^{k-1}\bar{H}_{d_1,d_2}    \\
\bar{H}_{d_1-j}\times \beta_1^{k}\bar{H}_{j,d_2-d_1+j}\ar[r]^{\hspace{2.7em}\varphi_j}\ar[u]_{\beta_k}
&   \beta_1^{k}\bar{H}_{d_1,d_2} \ar[u]_{\beta_k} \\
\bar{H}_{d_1-j}\times \beta_1^{k-h}\bar{H}_{j-h,d_2-d_1+j+h}\times \hat{H}_{h,d_2-d_1+h}
\ar[r]^{\hspace{2.7em}(\varphi_{j-h},\Id)}\ar[u]
& \beta_1^{k-h}\bar{H}_{d_1-h,d_2+h}\times\hat{H}_{h,d_2-d_1+h}\ar[u] \\
\bar{H}_{d_1-j}\times E_h\ar[r]\ar@{=}[u] &E_h\ar@{=}[u]
}
\end{equation}
\end{enumerate}
\end{prop}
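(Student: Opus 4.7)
The plan is to prove parts (i)--(iv) simultaneously by induction on $k$, since they feed into one another through the inductive description of the exceptional divisors. The base case $k=1$ reduces to statements about $\varphi_1$ before any blow-ups. For (i), observe that on the source $\bar{H}_{d_1-1} \times \bar{H}_{1,d_2-d_1+1}$, the condition $\gcd(P,L,H) = 1$ is automatic: if $L | H$ then $[L,H]$ would not represent a well-defined class, so Lemma \ref{immdef}(i) makes $\varphi_1$ immersive everywhere; the normal bundle comes from Lemma \ref{immdef}(iii), injectivity follows from unique factorization in $\mathbb{P}^1$, and properness is automatic. For (ii), $E_1 = \mathbb{P}(N^{\vee}_{\varphi_1})$ is identified with $\bar{H}_{d_1-1,d_2+1} \times \hat{H}_{1,d_2-d_1+1}$ by reading off the normal bundle fiberwise: $\mathbb{P}(\mathcal{E}^{d_2+1}_{d_1-1}) = \bar{H}_{d_1-1,d_2+1}$, and $\hat{H}_{1,d_2-d_1+1} = \bar{H}_{1,d_2-d_1+1}$ (no blow-ups in degree $1$). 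Parts (iii) and (iv) for $k=1$ amount to the elementary verification that factorizations $P_1 L_1 = PL$ are controlled by the multiplication diagram (\ref{diagmultmult}).

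For the induction step, assume (i)--(iv) hold for all $k' < k$. The heart of the argument is (i). Lemma \ref{immdef}(ii) localizes the obstruction to $\varphi_k$ being an immersion: nonzero kernel vectors only occur over the loci $W_h$ for $h < k$, and they are tangent to $\bar{H}_{d_1-k} \times W_h$. By the inductive (ii), after the blow-up $\beta_h$ the locus $W_h$ has been replaced by a smooth divisor $E_h$ whose normal direction absorbs precisely this kernel vector. Carefully tracking the successive blow-ups $\beta_1, \dots, \beta_{k-1}$ and using the fact (inductive (iv)) that the strict transform of $\varphi_k$ meets each $E_h$ transversely along a well-understood locus, one concludes that the lifted $\varphi_k$ has everywhere injective differential. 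The normal bundle acquires a twist by $\mathcal{O}(-E_h)$ at each stage via the standard blow-up formula; starting from the unblown expression of Lemma \ref{immdef}(iii) and tracking through yields the claimed $p_1^*\mathcal{E}^{d_2+k}_{d_1-k}(1)\otimes p_2^*(\mathcal{O}(1,1)(-E_1-\cdots-E_{k-1}))$. Injectivity of the lifted map follows from inductive (iv) by comparing preimages on each exceptional divisor.

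Once (i) is established, (ii) is immediate. For (iii), the square (\ref{diagmultmap}) before blow-ups is the multiplicative diagram (\ref{diagmultmult}), which is set-theoretically cartesian because any identity $P_1 L_1 = PL$ with $\deg P_1 = d_1-j < d_1-k = \deg P$ forces $P_1 \mid P$ (up to scalar), with quotient $M = P/P_1 \in \bar{H}_{j-k}$; scheme-theoretic cartesianness and the fiber count $\binom{d_1-k}{d_1-j}$ (the number of degree $d_1-j$ sub-divisors of a generic $P$) follow by combining this with the inductive (iv), which controls the behavior along the previously introduced exceptional divisors. Finally (iv) is the standard compatibility of strict transforms under $\beta_k$: the strict transform of the image of $\varphi_j$ meets $E_k$ (identified via (ii)) along the image of $\varphi_{j-h}$ on the appropriate factor, a statement that can be checked locally using the description of $E_k$ as a projective bundle.

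The main obstacle is the inductive step of (i), specifically showing that after the first $k-1$ blow-ups, the map $\varphi_k$ really is a closed immersion and not merely immersive. Scheme-theoretic injectivity requires ruling out spurious identifications along deeper strata, which is exactly what the cartesian diagrams of (iii) and (iv) allow one to do; this is why all four statements must be proved together. The computation of the twisted normal bundle is also delicate because the order in which the blow-ups are performed matters, and one must keep track of which factor of the product carries each $\mathcal{O}(-E_h)$.
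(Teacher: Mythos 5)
Your overall architecture matches the paper's: a simultaneous induction in which Lemma \ref{immdef} localizes the failure of $\varphi_k$ to be an immersion along the deeper strata, and the cartesian diagrams of (iii) and (iv) control what happens on the exceptional divisors. However, there are three concrete gaps. First, the induction cannot run on $k$ alone: the decisive step in proving immersivity is that the restriction of $\varphi_k$ to $\bar{H}_{d_1-k}\times E_h$ is already a closed immersion, and by \eqref{diagdivexc} this restriction is $(\varphi_{k-h},\Id)$ landing in $\beta_1^{k-h}\bar{H}_{d_1-h,d_2+h}\times\hat{H}_{h,d_2-d_1+h}$, i.e.\ an instance of the proposition for the degree pair $(d_1-h,d_2+h)$ with strictly smaller first degree. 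So the induction must also run over $d_1$ (the paper inducts on $d_1$, then on $k$). Relatedly, your mechanism for immersivity --- ``the normal direction of $E_h$ absorbs precisely this kernel vector'' --- is not an argument: blowing up a centre to which a kernel vector is tangent does not by itself restore injectivity of the differential. What actually happens is that the kernel vector becomes tangent to $E_h$ after the $h$-th blow-up (because the map of normal bundles $N_{E_h}\to \beta_h^*N_{W_h}$ is injective), the restriction of $\varphi_k$ to $E_h$ is immersive by the induction on $d_1$, hence the pushed-down vector vanishes at that stage; one then takes the minimal blow-up level at which the image of $v$ survives, observes it is tangent to the corresponding $E_l$, and repeats the argument to reach a contradiction.

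Second, Lemma \ref{immdef}(iii) computes the normal bundle only over the open locus $U=\{\gcd(P,L,H)=1\}$, and off $U$ the unblown $\varphi_k$ is not even an immersion, so there is no normal bundle downstairs to ``track through the blow-up formula''. One must first establish that the lifted $\varphi_k$ is an immersion everywhere, identify its normal bundle with the claimed twist over $(\beta_1^{k-1})^{-1}(U)$ via the blow-up formula, and then extend this isomorphism of vector bundles across the codimension $\geq 2$ complement using normality of the smooth source; your proposal omits this extension step. Third, for (iii) the divisibility observation $P_1\mid P$ only gives that \eqref{diagmultmap} is cartesian on points; the scheme-theoretic statement needs a separate mechanism. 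The paper's is: $\mathcal{W}\to\mathcal{V}:=\mathcal{X}\times_{\mathcal{Z}}\mathcal{Y}$ is a closed immersion, $\mathcal{V}\to\mathcal{Y}$ is finite (as a base change of the finite map $\varphi_j$ downstairs) with fibres of length $\binom{d_1-k}{d_1-j}$, and $\mathcal{W}\to\mathcal{Y}$ is finite flat of the same degree, so the ideal sheaf of $\mathcal{W}$ in $\mathcal{V}$ vanishes fibrewise and hence identically by Nakayama. Without some such length comparison the scheme structure in (iii) --- and with it the well-definedness of the lifted $\varphi_k$ in (i) and the diagrams in (iv) --- is not established.
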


\begin{proof}
We prove this proposition by induction on $d_1$, and when $d_1$ is fixed, by induction on $k$. When we use (i), (ii), (iii) or (iv), it is
always thanks to the induction hypothesis. We use without comment the fact that previously studied blow-ups are smooth blow-ups (ii).

The existence of the morphism $\varphi_k$ in (i) is given by \eqref{diagdivexc}.
Moreover, \eqref{diagdivexc} shows that if $1\leq h\leq k-1$, $\varphi_k^{-1}(E_h)=\bar{H}_{d_1-k}\times E_h$, and
the description of $\varphi_k: \varphi_k^{-1}(E_h)\to E_h$ given by \eqref{diagdivexc} implies, using (i), that it is a closed immersion.
On the other hand, it is easy to see that $\varphi_k$ is injective on the complement of these loci.
This shows that $\varphi_k$ is injective.

 Now suppose for contradiction that $\varphi_k$ is not immersive and
let $v$ be a non-zero tangent vector to $\bar{H}_{d_1-k}\times \beta_1^{k-1}\bar{H}_{k,d_2-d_1+k}$
at $x$ such that $d\varphi_k(v)=0$. For $0\leq l<k-1$, consider the cartesian diagrams deduced from \eqref{diagdivexc}:
\begin{equation}\label{diagblowup}
\xymatrix{
\bar{H}_{d_1-k}\times\beta_1^{l}\bar{H}_{k,d_2-d_1+k}\ar[r]^{\hspace{3em}\varphi_k}
&   \beta_1^{l}\bar{H}_{d_1,d_2}    \\
\bar{H}_{d_1-k}\times \beta_1^{k-1}\bar{H}_{k,d_2-d_1+k}\ar[r]^{\hspace{3em}\varphi_k}\ar[u]_{\beta_{l+1}^{k-1}}
&   \beta_1^{k}\bar{H}_{d_1,d_2} \ar[u]_{\beta_{l+1}^{k-1}} 
}
\end{equation}
Write $\beta_1^{k-1}(x)=(P,[L,H])$, and let $k-h=\deg(\gcd(L,H))$.
By Lemma \ref{immdef} (ii), $h<k$ and $d\beta_1^{k-1}(v)$ is tangent to $\bar{H}_{d_1-k}\times W_h$.
Consequently, $d\beta_{h+1}^{k-1}(v)$ is tangent to $\bar{H}_{d_1-k}\times E_h$. By \eqref{diagdivexc} and (i),
$\varphi_k|_{\bar{H}_{d_1-k}\times E_h}$ is immersive, and by
the commutativity of \eqref{diagblowup} for $l=h$, $d\varphi_k(d\beta_{h+1}^{k-1}(v))=0$, so that  $d\beta_{h+1}^{k-1}(v)=0$.
Now let $h<l\leq k$ be minimal such that $w=d\beta_{l+1}^{k-1}(v)\neq 0$.
Since $d\beta_{l}(w)=0$, $w$ is tangent to $\bar{H}_{d_1-k}\times E_l$. The same argument as above shows that $w=0$: a contradiction.

Since $\varphi_k$ is an immersion, its normal bundle is a vector bundle. By Lemma \ref{immdef} (iii)
and the behaviour of normal bundles under smooth blow-ups (\cite{Fulton} Appendix B 6.10),
it is isomorphic to $p_1^*\mathcal{E}^{d_2+k}_{d_1-k}(1)\otimes p_2^*(\mathcal{O}(1,1)(-E_1-\dots-E_{k-1}))$
on $(\beta_1^{k-1})^{-1}(U)$. Since this open set has complement of codimension $\geq 2$
and $\bar{H}_{d_1-k}\times \beta_1^{k-1}\bar{H}_{k,d_2-d_1+k}$ is smooth, hence normal, this isomorphism extends on all of 
$\bar{H}_{d_1-k}\times \beta_1^{k-1}\bar{H}_{k,d_2-d_1+k}$. This ends the proof of (i).

By (i), $\beta_1^k\bar{H}_{d_1,d_2}$ is the blow-up of a smooth subvariety in a smooth variety.
The computation of the normal bundle in (i) implies the required description of the exceptional divisor, proving (ii).

All maps in \eqref{diagmultmap} are well-defined by (iv).
Let us first prove the second assertion of (iii). When $z$ does not belong to an exceptional divisor, it is immediate
from the definition of $\varphi_{j}$. When $z$ belongs to an exceptional divisor $E_h$, it follows by induction of the descriptions
of $\varphi_j:\varphi_j^{-1}(E_h)\to E_h$ and $\varphi_k:\varphi_k^{-1}(E_h)\to E_h$ given in \eqref{diagdivexc}.

 The diagram \eqref{diagmultmap} is commutative because it is induced by the commutative diagram
\eqref{diagmultmult}. To show that it is cartesian, let us introduce the fiber product
$\mathcal{V}=\mathcal{X}\times_{\mathcal{Z}}\mathcal{Y}$ and the natural map $\mathcal{W}\to\mathcal{V}$. By (i),
$(\Id,\varphi_k): \mathcal{W}\to \mathcal{X}$ and $\varphi_k:\mathcal{Y}\to\mathcal{Z}$ hence also $\mathcal{V}\to\mathcal{X}$
are closed immersions. It follows that $\mathcal{W}\to\mathcal{V}$ is a closed immersion. Let $\mathcal{I}$ be its sheaf of ideals;
we want to show that $\mathcal{I}=0$.
By (iv), $\mathcal{X}\to\mathcal{Z}$ is a base-change of $\varphi_j:\bar{H}_{d_1-j}\times\bar{H}_{j,d_2-d_1+j}\to\bar{H}_{d_1,d_2}$,
hence it is finite. It follows that $\mathcal{V}\to\mathcal{Y}$ is finite.
Hence we may view $0\to\mathcal{I}\to\mathcal{O}_{\mathcal{V}}\to\mathcal{O}_{\mathcal{W}}\to 0$ as a short exact sequence of
coherent sheaves on $\mathcal{Y}$. Since $\mathcal{W}\to\mathcal{Y}$ is easily seen to be finite flat of degree $\binom{d_1-k}{d_1-j}$,
we get a short exact sequence $0\to\mathcal{I}_y\to(\mathcal{O}_{\mathcal{V}})_y\to(\mathcal{O}_{\mathcal{W}})_y\to 0$ for every $y\in Y$.
But both $(\mathcal{O}_{\mathcal{V}})_y$ and $(\mathcal{O}_{\mathcal{W}})_y$ are of dimension $\binom{d_1-k}{d_1-j}$, so that
$\mathcal{I}_y=0$. By Nakayama's lemma, $\mathcal{I}=0$, and (iii) holds.

The upper square of \eqref{diagdivexc} is cartesian because \eqref{diagmultmap} is.
When $h=k$, the lower square is the cartesian diagram relating the exceptional divisors of the blow-ups. The morphism between
those exceptional divisors is induced by the natural map between the normal bundles of the blown-up loci.
The explicit identification of these normal bundles made in Lemma \ref{immdef} (iii) allow to check that
this morphism is $\varphi_{j-k}$.
When $h<k$, the lower square is obtained by restricting the blow-up of $W_k$ to $E_h$. Its description follows of the description
of $\varphi_k:\varphi_k^{-1}(E_h)\to E_h$ given by \eqref{diagdivexc}, ending the proof of (iv).
\end{proof}

It is easy to deduce Proposition \ref{blowup}:

\begin{proof}[Proof of Proposition \ref{blowup}]
The variety $\hat{H}_{d_1,d_2}$ is smooth by Proposition \ref{blowuprec} (ii). The isomorphism
$E_k\simeq \hat{H}_{d_1-k,d_2+k}\times \hat{H}_{k,d_2-d_1+k}$ is provided by \eqref{diagdivexc}.

The computation of $E_j|_{E_k}$ when $j>k$ follows from the description of $\varphi_j:\varphi_j^{-1}(E_k)\to E_k$ given by \eqref{diagdivexc}.
The computation of $E_k|_{E_j}$ when $j>k$ follows from the description of $E_k|_{W_j}$ also given by \eqref{diagdivexc}.
The computation of $\mathcal{O}(E_k)|_{E_k}$ is a consequence of the description of the normal bundle to $\varphi_k$
in Proposition \ref{blowuprec} (i).

It is then easily seen by induction on $d_1$ that $(E_k)_{1\leq k\leq d_1-1}$ is a strict normal crossing divisor on 
$\hat{H}_{d_1,d_2}$. Indeed, for every $k$, $E_{k}$ is smooth and
$(E_j|_{E_{k}})_{j\neq k}$ is a strict normal crossing divisor on $E_k$ by induction. This implies 
that $(E_k)_{1\leq k\leq d_1-1}$ is a strict normal crossing divisor on 
$\hat{H}_{d_1,d_2}$.

Finally, the explicit expression of $\varphi_k$ shows that $\varphi_k^*\mathcal{O}(l_1,l_2)=\mathcal{O}(l_1+l_2,l_1,l_2)$
on $\bar{H}_{d_1-k}\times\bar{H}_{k,d_1-d_2+k}$. The computation of $\mathcal{O}(l_1,l_2)|_{E_k}$ follows.
\end{proof}

\begin{rem}\label{blowuptriv}
 It follows from Proposition \ref{blowuprec} (ii) that the last blow-up $\beta_{d_1-1}$ was the blow-up of a smooth
divisor, hence was not useful to construct $\hat{H}_{d_1,d_2}$. However, it was important to describe its
exceptional divisor $E_{d_1-1}$, that is the strict transform of the discriminant.
\end{rem}

\begin{rem}\label{identbir}
As it will be useful later, let us make explicit the identification of $E_k$ obtained above,
at least birationally.
It follows from the proof of Lemma \ref{immdef} (iii) and Proposition \ref{blowuprec} (ii) that the exceptional divisor
$E_k$ was birationally identified with $\bar{H}_{d_1-k,d_2+k}\times\bar{H}_{k,d_2-d_1+k}$
by sending a tangent vector induced by $[F,G]$
at a point $[PL,PH]\in W_k\subset \bar{H}_{d_1,d_2}$ to $([P,LG-HF],[L,H])$.
\end{rem}

\subsection{Linear systems on $\bar{H}_{d_1,d_2}$}

In this paragraph, we will construct several linear systems on $\bar{H}_{d_1,d_2}$,
generalizing the construction in Proposition \ref{cont1} of the linear system inducing the first contraction.

We fix a coordinate system $X_0,X_1$ on $\mathbb{P}^1$.
We will need to work with formal identities involving coefficients of polynomials of degrees $d_1$ and $d_2$.
For this reason we denote by $\mathfrak{M}_d$ the
set of monomials in $X_0,X_1$ of degree $d$, we let $(f^{(M)})_{M\in\mathfrak{M}_{d_1}}$
and $(g^{(M)})_{M\in\mathfrak{M}_{d_2}}$ be indeterminates, and we will
work in the ring $A=\mathbbm{k}[X_s,f^{(M)}, g^{(M)}]$ trigraded by the total degree in the $X_s$,
the $f^{(M)}$ and the $g^{(M)}$. Let $f=\sum_{M\in\mathfrak{M}_{d_1}} f^{(M)} M$ and $g=\sum_{M\in\mathfrak{M}_{d_2}} g^{(M)} M$.
We will often view elements of $A$ as polynomials in $X_0,X_1$ with coefficients
in $\mathbbm{k}[f^{(M)}, g^{(M)}]$. If $a\in A$, and $M=X_0^{d-j}X_1^j\in\mathfrak{M}_d$, we will
denote by $a^{(M)}=a^{(j)}$ the coefficient of $M$ in $a$.
If $a\in A$ and $(\lambda,\mu)\in\mathbbm{k}^2$, $a(\lambda,\mu)\in \mathbbm{k}[f^{(M)}, g^{(M)}]$
is obtained by evaluating $(X_0,X_1)$ at $(\lambda,\mu)$. Finally, if $(\lambda,\mu)\in\mathbbm{k}^2$,
$L_{\lambda,\mu}:=\lambda X_1-\mu X_0$ is a linear form vanishing on $(\lambda,\mu)$.

The following proposition is a variant of Euclid's algorithm formally applied to $g$ and $f$ (see Remark \ref{twisted}).

\begin{prop}\label{identities}
For all $0\leq u\leq d_1-1$, fix $(\lambda_u,\mu_u)\neq(0,0)\in \mathbbm{k}^2$.
Then there exist homogeneous elements $r_i, q_i\in A$ for $0\leq i\leq d_1-1$,
that depend algebraically on $\lambda_u,\mu_u$, such that $r_i$
is homogeneous of degree $(d_1-1-i,d_2-d_1+1+i,1+i)$ 
and such that the following identities hold in $A$:
\begin{alignat}{3}
f(\lambda_0,\mu_0)^{d_2-d_1+1}g &= fq_0+L_{\lambda_0,\mu_0}^{d_2-d_1+1}r_0 \label{identity1} \\
r_0(\lambda_1,\mu_1)^2f &= r_0q_1+f(\lambda_0,\mu_0)^{d_2-d_1+1}L_{\lambda_1,\mu_1}^2 r_1 \label{identity2} \\
r_{i-1}(\lambda_i,\mu_i)^2r_{i-2}
&= r_{i-1}q_i+r_{i-2}(\lambda_{i-1},\mu_{i-1})^2 L_{\lambda_i,\mu_i}^2 r_i, \label{identityi}
\end{alignat}
where $2\leq i\leq d_1-1$.
\end{prop}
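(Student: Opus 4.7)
I would construct the $r_i$ and $q_i$ by induction on $i$, using Cramer's rule applied to Sylvester-type linear maps.

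Setting $A':=\mathbbm{k}[f^{(M)},g^{(M)}]$, identity \eqref{identity1} arises from Cramer's rule for the $A'$-linear map
\[
\Phi_0\colon A'[X_0,X_1]_{d_2-d_1}\oplus A'[X_0,X_1]_{d_1-1}\to A'[X_0,X_1]_{d_2},\quad (q,r)\mapsto fq+L_{\lambda_0,\mu_0}^{d_2-d_1+1}r.
\]
This is the Sylvester matrix of $(f,L_{\lambda_0,\mu_0}^{d_2-d_1+1})$, whose determinant equals $\pm f(\lambda_0,\mu_0)^{d_2-d_1+1}$ (the resultant). Applying Cramer to $g$ in the target produces $q_0,r_0\in A$ satisfying \eqref{identity1}, and the trigraded degrees of $r_0$ can be read off from the construction.

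For the inductive step, assuming $r_0,\ldots,r_{i-1}$ have been built and the preceding identities established, I would apply the same Cramer procedure to
\[
\Phi_i\colon (q,r)\mapsto r_{i-1}q+L_{\lambda_i,\mu_i}^2 r
\]
(whose Sylvester determinant is $\pm r_{i-1}(\lambda_i,\mu_i)^2$) with right-hand side $P_i:=f$ (when $i=1$) or $P_i:=r_{i-2}$ (when $i\geq 2$). This yields $\tilde q_i,\tilde r_i\in A$ with
\[
r_{i-1}(\lambda_i,\mu_i)^2 P_i = r_{i-1}\tilde q_i + L_{\lambda_i,\mu_i}^2 \tilde r_i.
\]
Setting $r_i:=\tilde r_i/C_i$, with $C_i:=f(\lambda_0,\mu_0)^{d_2-d_1+1}$ for $i=1$ and $C_i:=r_{i-2}(\lambda_{i-1},\mu_{i-1})^2$ for $i\geq 2$, would then give the required identity.

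The principal obstacle is to prove the divisibility $C_i\mid \tilde r_i$ in $A$. Since $C_i\in A'$ is a nonzero polynomial, hence a non-zero-divisor in $A=A'[X_0,X_1]$, the question reduces to integrality. My plan is to use identity $(i-1)$, rearranged as
\[
r_{i-3}(\lambda_{i-2},\mu_{i-2})^2 L_{\lambda_{i-1},\mu_{i-1}}^2 \,r_{i-1} \equiv r_{i-2}(\lambda_{i-1},\mu_{i-1})^2 \,r_{i-3} \pmod{r_{i-2}},
\]
substitute it into the Cramer relation for $\tilde r_i$, and extract $C_i$ as a common factor after regrouping; the case $i=1$ is parallel, using \eqref{identity1} in place of $(i-1)$. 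The delicacy of the cancellation is already apparent in the case $d_1=2$, $d_2=3$, $(\lambda_0,\mu_0)=(1,0)$, $(\lambda_1,\mu_1)=(0,1)$: a direct expansion shows that the polynomial $f(s,-\alpha)=s^2 a - sb\alpha+c\alpha^2$, with $a=f(\lambda_0,\mu_0)$, $\alpha=r_0(\lambda_0,\mu_0)$, $s=r_0(\lambda_1,\mu_1)$, is divisible by $a^2$ and not merely by $a$—a twofold cancellation reflecting the iterated Euclidean-algorithm structure. Making this divisibility rigorous in general is therefore the main technical difficulty.
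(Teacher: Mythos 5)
Your Cramer/Sylvester set-up is a legitimate alternative to the paper's construction (the paper instead builds $q_0,r_0$ by an explicit recursion on $j$ realizing the division of $g$ by $f$ step by step, following \cite{Oolqp} Lemme 2.6; the two produce the same kind of object with the same tridegrees). But the proposal stops exactly where the proposition actually lives: the divisibility $C_i\mid\tilde r_i$ is not proved, only announced as ``the main technical difficulty'' with a plan to ``substitute identity $(i-1)$ and extract $C_i$ after regrouping.'' Working modulo $r_{i-2}$ in $A$ and ``extracting a common factor'' is not a proof of divisibility in $A$ (the quotient need not be a domain, and a congruence modulo $r_{i-2}$ does not transfer factors of elements of $\mathbbm{k}[f^{(M)},g^{(M)}]$); your $d_1=2$, $d_2=3$ computation only verifies the phenomenon in one instance. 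So there is a genuine gap, and it is the central one.

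It is worth recording how the paper closes it, because the two cases require genuinely different inputs. For $i=1$ the paper eliminates $r_0$ between \eqref{identity1} and the raw identity to get $fa=X_0^2X_1^{d_2-d_1+1}\tilde r_1+(f^{(0)})^{d_2-d_1+1}b$ (after normalizing $(\lambda_0,\mu_0)=(1,0)$, $(\lambda_1,\mu_1)=(0,1)$, which is permitted by an algebraic-dependence/specialization argument), and then runs a two-pass induction on the coefficients $(fa)^{(j)}$ to force $(f^{(0)})^{d_2-d_1+1}\mid a$; this case does not follow the same pattern as $i\ge 2$ and is not ``parallel'' to it. For $i\ge2$ the paper first identifies $\tilde r_{i-1}=r_{i-3}(\lambda_{i-2},\mu_{i-2})^2 r_{i-1}$ by uniqueness of the algorithm, deduces $r_{i-3}(\lambda_{i-2},\mu_{i-2})^2\mid\tilde q_i$ from the bilinearity of the quotient's coefficients, and then crucially invokes Lemma \ref{irredcoeff} (iii): the resultant-like polynomials $r_j(\lambda,\mu)$ are \emph{irreducible}, and two consecutive ones are coprime because their degrees differ. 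That irreducibility is itself proved by a geometric argument (a nonvanishing statement on $\tilde W_{j+1}$ together with the degree $\mathcal{O}(\Delta)=\mathcal{O}(d_2-d_1+j+1,j+1)$ of the discriminant), in an induction intertwined with the construction of the $r_i$. Your proposal contains no analogue of this irreducibility input, and without it (or the coefficient-chasing argument for $i=1$) the division by $C_i$ is unjustified. To complete your approach you would need to either import Lemma \ref{irredcoeff} (iii) and redo the bookkeeping for your normalization of $\tilde r_i$ (where the spurious factor is $r_{i-2}(\lambda_{i-1},\mu_{i-1})^2$ rather than the paper's $r_{i-3}(\lambda_{i-2},\mu_{i-2})^4$), or find a genuinely different proof of the divisibility.
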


In the course of the proof of this proposition, we will need the following lemma, that we prove first.

\begin{lem}\label{irredcoeff}
Let $0\leq j\leq d_1-1$. Suppose that $r_i, q_i\in A$ as in
Proposition \ref{identities} have been constructed for $0\leq i\leq j$. Then:
\begin{enumerate} 
\item [(i)] If $(\lambda,\mu)\neq(0,0)\in \mathbbm{k}^2$, $r_j(\lambda,\mu)$ does not vanish identically on $\tilde{W}_{j+1}$.
\item [(ii)] The coefficients $r_j^{(M)}$ of $r_j$ vanish on $\tilde{W}_{j}$.
\item [(iii)] For every $(\lambda,\mu)\neq(0,0)\in \mathbbm{k}^2$, $r_j(\lambda,\mu)$ is irreducible.
\end{enumerate}
\end{lem}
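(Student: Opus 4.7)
The plan is a double induction, on $d_1$ (outer) and on $j$ (inner), assuming (i)--(iii) in all strictly smaller cases. For (ii), I will parametrize a dense open subset of $\tilde{W}_j$ by the multiplication map $(P,L,H)\mapsto(PL,PH)$ with $\deg P=d_1-j$, and substitute $f=PL$, $g=PH$ into the identities $(\ref{identity1})$--$(\ref{identityi})$. Working inductively on the Euclid step $i$ from $0$ to $j$, I will show $P\mid R_i$, where $R_i$ denotes the specialization of $r_i$: assuming $P\mid R_{i-1}$ and $P\mid R_{i-2}$, the identity $(\ref{identityi})$ shows that $P$ divides $L_{\lambda_i,\mu_i}^2 R_i$ times scalars of the form $P(\lambda_{i-1},\mu_{i-1})^2 \tilde{R}_{i-2}(\lambda_{i-1},\mu_{i-1})^2$, which are nonzero generically (the nonvanishing of $r_{i-2}(\lambda_{i-1},\mu_{i-1})$ on $\tilde{W}_{i-1}$ is the inductive hypothesis (i)). Combined with the generic coprimality of $P$ with $L_{\lambda_i,\mu_i}$, this yields $P\mid R_i$. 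At $i=j$, the divisibility $P\mid R_j$ together with $\deg_X R_j=d_1-1-j<d_1-j=\deg P$ forces $R_j=0$; by Zariski closure each coefficient $r_j^{(M)}$ vanishes on all of $\tilde{W}_j$.

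For (i), I will use the same parametrization but now with $\deg P=d_1-j-1$, giving a dense open subset of $\tilde{W}_{j+1}$. The same induction yields $R_i=P\cdot\tilde{R}_i$ with $\deg_X\tilde{R}_i=j-i$. Dividing each identity by $P$ and tracking the scalar factors that appear, the $\tilde{R}_i$ satisfy identities of the same shape as $(\ref{identity1})$--$(\ref{identityi})$ but with $(L,H)$ in place of $(f,g)$, up to explicit nonzero scalars of the form $\prod_u P(\lambda_u,\mu_u)^{\bullet}$. At $i=j$, $\tilde{R}_j$ is therefore a nonzero scalar multiple of the ``final remainder'' produced by the Euclid algorithm for $(L,H)$ in the smaller setting with $d_1'=j+1$, $d_2'=d_2-d_1+j+1$; by the outer inductive hypothesis (i) applied at $d_1=d_1'$ and $j=d_1'-1$ (where the target $\tilde{W}_{d_1'}$ is the whole space), this final remainder is not identically zero. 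Hence $r_j(\lambda,\mu)|_{\tilde{W}_{j+1}}=P(\lambda,\mu)\,\tilde{R}_j$ is not identically zero for any fixed $(\lambda,\mu)\neq(0,0)$.

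The hard part will be (iii), the irreducibility of $r_j(\lambda,\mu)\in\mathbbm{k}[f^{(M)},g^{(M)}]$. My plan is to describe the divisor $V_j:=\{r_j(\lambda,\mu)=0\}\subset\bar{H}_{d_1}\times\bar{H}_{d_2}$ geometrically: by (ii) it contains $\tilde{W}_j$, and I expect $V_j$ to be the closure of the incidence locus of pairs $(F,G)$ admitting a syzygy of the shape $A F+B G=L_{\lambda,\mu}^{d_2-d_1+1+j}C$ with $A,B,C$ of explicitly prescribed degrees (corresponding to the vanishing at $(\lambda,\mu)$ of the Euclid remainder $r_j$). I will try to present this incidence as the image of an irreducible parameter space under a generically finite morphism, which would give irreducibility of $V_j$ as a set; squarefreeness of $r_j(\lambda,\mu)$ should then follow by computing a partial derivative in a well-chosen coefficient such as $g^{(X_0^{d_2})}$ and exhibiting a smooth point of $V_j$ at which the derivative is nonzero. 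The main technical obstacles I anticipate are pinning down the correct incidence description, matching its bidegree in $(F,G)$ with the known bidegree $(d_2-d_1+1+j,1+j)$ of $r_j(\lambda,\mu)$, and controlling the derivative at a suitable smooth point of $V_j\setminus\tilde{W}_j$.
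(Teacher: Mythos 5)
Your treatments of (i) and (ii) are essentially workable, and (ii) is in substance the paper's own argument: the identities force $\gcd(F,G)\mid R_j$, and the degree count $\deg R_j=d_1-1-j<d_1-j=\deg\gcd(F,G)$ kills $R_j$. For (i) your route (factor $P$ out of every $R_i$ and recognize the quotients $\tilde R_i$ as the remainders of the same algorithm run on $(L,H)$) is a genuine alternative to the paper's, which instead argues directly on $\tilde W_{j+1}$ that $\Pi=\gcd(F,G)$ divides $R_j$, and that if $R_j$ were zero the maximal nonzero $R_i$ would have to divide $\Pi$ --- impossible for degree reasons --- so that $R_j$ is a nonzero multiple of $\Pi$ and does not vanish at a general $(\lambda,\mu)$. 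However, your reduction is circular at the top case $j=d_1-1$: there $\deg P=0$, the ``smaller setting'' is $(d_1',d_2')=(d_1,d_2)$, and you are invoking statement (i) for exactly the instance you are trying to prove. That case is needed (it is the nonvanishing and, via (iii), the irreducibility of the resultant $r_{d_1-1}$, used repeatedly later), so you must supply a separate argument there; the paper's degree argument handles it uniformly since for general $(F,G)$ one has $\Pi=1$ and no remainder of positive degree can divide it.

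The real gap is (iii), which you have not proved: you describe a program (an incidence description of $V_j$, a generically finite parametrization, a Jacobian computation for squarefreeness) and explicitly list the unresolved obstacles. The paper's argument is short and bypasses all of this, using only (i), (ii) and the known class of the discriminant. If $h$ is an irreducible factor of $r_j(\lambda,\mu)$, then since $r_j(\lambda,\mu)$ vanishes on the irreducible set $\tilde W_j$ by (ii), some such $h$ vanishes on $\tilde W_j$; its pull-back along $\tilde\varphi_{j+1}$ is a nonzero (by (i)) section of $\mathcal{O}(l_1+l_2,l_1,l_2)$ on $\bar H_{d_1-j-1}\times\bar H_{j+1}\times\bar H_{d_2-d_1+j+1}$ vanishing on $\bar H_{d_1-j-1}\times\Delta$, and restricting to a general fiber of the first projection gives a nonzero section of $\mathcal{O}(l_1,l_2)$ on $\bar H_{j+1}\times\bar H_{d_2-d_1+j+1}$ vanishing on the discriminant, whose class is $\mathcal{O}(d_2-d_1+j+1,j+1)$. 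Hence $l_1\geq d_2-d_1+j+1$ and $l_2\geq j+1$, which forces $h$ to be all of $r_j(\lambda,\mu)$, giving irreducibility and reducedness in one stroke. Your plan, even if completed, would be substantially more work, and the derivative step would be delicate to control in positive characteristic; as written, part (iii) stands unproved.
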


\begin{proof}
We use induction on $j$.

Let us first prove (i). By induction, the $r_i(\lambda_{i+1},\mu_{i+1})$ for $i<j$ do not vanish identically on $\tilde{W}_{j+1}$.
It is also clear that $f(\lambda_0,\mu_0)$ does not vanish identically on $\tilde{W}_{j+1}$.
Choose $(F,G)\in \tilde{W}_{j+1}$ general, so that it satisfies the three following conditions:
neither $f(\lambda_0,\mu_0)$ nor some $r_i(\lambda_{i+1},\mu_{i+1})$ vanish on it, $\Pi=\gcd(F,G)$ is of degree
$d_1-1-j$, and $\Pi$ does not vanish on $(\lambda,\mu)$ nor on some $(\lambda_i,\mu_i)$.
Substitute the coefficients of $F$ and $G$ in the $f^{(M)}$ and the $g^{(M)}$ in \eqref{identity1}, \eqref{identity2}
and \eqref{identityi} for $i\leq j$ to obtain polynomials $Q_i,R_i\in \mathbbm{k}[X_0,X_1]$,
and identities relating $G,F,Q_i,R_i$. 
These identities immediately show that $\Pi|R_j$. Suppose for contradiction that $R_j=0$
and let $-1\leq i< j$ be maximal such that $R_i\neq 0$
(where, by convention, $R_{-1}=F$). Then these same identities show that $R_i|\Pi$, which is impossible for degree reasons. 
Hence $R_j\neq 0$ and $\Pi=R_j$ for degree reasons. It follows that $R_j(\lambda,\mu)\neq 0$, as wanted.

 We argue in the same way to prove (ii): choose  $(F,G)\in \tilde{W}_{j}$ general, so that
neither $f(\lambda_0,\mu_0)$ nor some $r_i(\lambda_{i+1},\mu_{i+1})$ vanish on it (applying (i) by induction).
Substitute the coefficients of $F$ and $G$ in the $f^{(M)}$ and the $g^{(M)}$ in \eqref{identity1}, \eqref{identity2}
and \eqref{identityi} for $i\leq j$, to obtain polynomials $Q_i,R_i\in \mathbbm{k}[X_0,X_1]$ as before.
The identities relating $G,F,Q_i,R_i$ immediately show that $\gcd(F,G)|R_j$ which implies
$R_j=0$ for degree reasons. As a consequence, all the $r_j^{(M)}$ vanish on $(F,G)$ as wanted.

Let us finally check (iii). Let $h$ be an irreducible factor of $r_j(\lambda,\mu)$ vanishing on $\tilde{W}_{j}$ (using (ii)),
and let $(l_1,l_2)$ be its homogeneous degrees. The pull-back $\tilde{\varphi}_{j+1}^*h$ is a section
of $\mathcal{O}(l_1+l_2,l_1,l_2)$ on $\bar{H}_{d_1-j-1}\times\bar{H}_{j+1}\times\bar{H}_{d_2-d_1+j+1}$ vanishing on
$\bar{H}_{d_1-j-1}\times\Delta$, that is non-zero by (i).
Restricting it to a general fiber of the projection to the first factor, we get a non-zero section
of $\mathcal{O}(l_1,l_2)$ on $\bar{H}_{j+1}\times\bar{H}_{d_2-d_1+j+1}$ vanishing on $\Delta$.
But $\mathcal{O}(\Delta)=\mathcal{O}(d_2-d_1+j+1,j+1)$. Thus we necessarily have $l_1\geq d_2-d_1+j+1$ and
$l_2\geq j+1$, hence $h=r_j(\lambda,\mu)$.
\end{proof}

\begin{proof}[Proof of Proposition \ref{identities}]
To construct $q_0$ and $r_0$, we follow \cite{Oolqp} Lemme 2.6: we show by induction on $0\leq j\leq d_2-d_1+1$ that there exist
$q_{0,j},r_{0,j}\in A$ homogeneous of degrees $(d_2-d_1,j-1,1)$ and $(d_2-j,j,1)$ satisfying:
\begin{equation*}\label{divetapes}
f(\lambda_0,\mu_0)^{j}g = fq_{0,j}+L_{\lambda_0,\mu_0}^{j}r_{0,j}.
\end{equation*}
If $j=0$, take $q_{0,0}=0$ and $r_{0,0}=g$. If $q_{0,j},r_{0,j}$ have already been constructed,
fix a linear combination $L$ of $X_0$ and $X_1$ such that $L(\lambda_0,\mu_0)=1$ and set:
\begin{alignat*}{2}
q_{0,j+1}&=f(\lambda_0,\mu_0)q_{0,j}-r_{0,j}(\lambda_0,\mu_0)L^{d_2-d_1-j}L_{\lambda_0,\mu_0}^{j}\\
r_{0,j+1}&=(f(\lambda_0,\mu_0)r_{0,j}-r_{0,j}(\lambda_0,\mu_0)L^{d_2-d_1-j}f)/L_{\lambda_0,\mu_0}.
\end{alignat*}
Setting $q_0=q_{0,d_2-d_1+1}$ and $r_0=r_{0,d_2-d_1+1}$, we obtain the first identity \eqref{identity1}.

Now, treat temporarily the coefficients $r_0^{(M)}$ of $r_0$ as indeterminates. Applying the identity
\eqref{identity1} constructed above to $f$, $r_0$ and $(\lambda_1,\mu_1)$ (instead of $g$, $f$ and $(\lambda_0,\mu_0)$),
we obtain a formula of the form:
\begin{equation}\label{identity2tilde}
r_0(\lambda_1,\mu_1)^2 f = r_0q_1+L_{\lambda_1,\mu_1}^2\tilde{r}_1
\end{equation}
in $\mathbbm{k}[X_s,f^{(M)},r_0^{(M)}]$. Substituting in the indeterminate $r_0^{(M)}$
its value as an element of $\mathbbm{k}[f^{(M)}, g^{(M)}]$,
\eqref{identity2tilde} becomes an identity in $A$. To get \eqref{identity2}, it remains to check that
$f(\lambda_0,\mu_0)^{d_2-d_1+1}|\tilde{r}_1$. By specialization, and because
$\tilde{r}_1$ depends algebraically on $\lambda_0,\mu_0,\lambda_1,\mu_1$ by construction,
it suffices to check that under the additional hypothesis that
$[\lambda_0,\mu_0]\neq [\lambda_1,\mu_1]$. When this is the case, up to changing coordinates in $\mathbb{P}^1$,
it is possible to suppose that $(\lambda_0,\mu_0)=(1,0)$ and $(\lambda_1,\mu_1)=(0,1)$; we assume this is the case until
we finish to check that $f(\lambda_0,\mu_0)^{d_2-d_1+1}|\tilde{r}_1$.
Combining \eqref{identity1}
and \eqref{identity2tilde} to eliminate $r_0$,
one obtains an identity of the form $fa=X_0^2X_1^{d_2-d_1+1}\tilde{r}_1+(f^{(0)})^{d_2-d_1+1}b$, where $a,b\in A$
are homogeneous, $a$ being of degree $d_2-d_1+1$ in the $X_s$. 
As a consequence, for $0\leq j\leq d_2-d_1+2$, or for $j\in\{d_2,d_2+1\}$,
\begin{equation}\label{congru}\tag{$E_j$}
(f^{(0)})^{d_2-d_1+1}|(fa)^{(j)}.
\end{equation}
By $(\Eq_{d_2+1})$, $(f^{(0)})^{d_2-d_1+1}|a^{(d_2-d_1+1)}$.
Then, by $(\Eq_{d_2})$, $(f^{(0)})^{d_2-d_1+1}|a^{(d_2-d_1)}$.
Now suppose that $(f^{(0)})^{d_2-d_1+1}\nmid a$ and let $0\leq j\leq d_2-d_1-1$
be minimal such that $(f^{(0)})^{d_2-d_1+1}\nmid a^{(j)}$.
Considering equations $(\Eq_{j+k})$ for $0\leq k\leq d_2-d_1-j-1$,
we prove successively that $(f^{(0)})^{d_2-d_1-k}|a^{(j+k)}$ for $0\leq k\leq d_2-d_1-j-1$.
Then, considering equations $(\Eq_{j+k+1})$ for $d_2-d_1-j-1\geq k\geq 0$,
we prove successively that $(f^{(0)})^{d_2-d_1-k+1}|a^{(j+k)}$ for $d_2-d_1-j-1\geq k\geq 0$.
In particular, $(f^{(0)})^{d_2-d_1+1}|a^{(j)}$, which is a contradiction.
We have proved that $(f^{(0)})^{d_2-d_1+1}|a$, hence that $(f^{(0)})^{d_2-d_1+1}|\tilde{r}_1$.
It follows that \eqref{identity2tilde} gives rise to an identity of the required form \eqref{identity2}.

Let $2\leq i\leq d_1-1$, suppose that $r_j$ and $q_j$
have been constructed for all $j<i$, and let us construct $r_i$ and $q_i$.
Treat temporarily
the coefficients of $r_{i-3}$ and $r_{i-2}$ as indeterminates (where, by convention, $r_{-1}=f$). Applying the identities
\eqref{identity1} and \eqref{identity2} constructed above to $r_{i-3}$, $r_{i-2}$, $(\lambda_{i-1},\mu_{i-1})$
and $(\lambda_i,\mu_i)$ (instead of $g$, $f$, $(\lambda_0,\mu_0)$
and $(\lambda_1,\mu_1)$), we obtain formulas of the form: 
\begin{alignat}{2}
r_{i-2}(\lambda_{i-1},\mu_{i-1})^{2}r_{i-3} 
= r_{i-2}\tilde{q}_{i-1}+L_{\lambda_{i-1},\mu_{i-1}}^{2}\tilde{r}_{i-1} \label{identityi1} \\
\tilde{r}_{i-1}(\lambda_i,\mu_i)^2r_{i-2} 
= \tilde{r}_{i-1}\tilde{q}_{i}+r_{i-2}(\lambda_{i-1},\mu_{i-1})^{2} L_{\lambda_i,\mu_i}^2 \tilde{r}_{i} \label{identityi2} 
\end{alignat}
in $\mathbbm{k}[X_s,r_{i-3}^{(M)},r_{i-2}^{(M)}]$. Substituting in $r_{i-3}^{(M)}$ and $r_{i-2}^{(M)}$
their values as elements of $\mathbbm{k}[f^{(M)}, g^{(M)}]$, 
\eqref{identityi1} and \eqref{identityi2} become identities in $A$.
Since \eqref{identityi1} and \eqref{identityi} for $i-1$ have been constructed
exactly in the same way, it follows that $\tilde{q}_{i-1}=q_{i-1}$ and that
$\tilde{r}_{i-1}=r_{i-3}(\lambda_{i-2},\mu_{i-2})^2 r_{i-1}$.
Since, by construction, the coefficients of $\tilde{q}_{i}$
are polynomials of bidegree $(1,1)$ in the coefficients of $r_{i-2}$ and $\tilde{r}_{i-1}$,
we see that $r_{i-3}(\lambda_{i-2},\mu_{i-2})^2 |\tilde{q}_{i}$.
Write $\tilde{q}_{i}=r_{i-3}(\lambda_{i-2},\mu_{i-2})^2q_{i}$. Equation \eqref{identityi2} becomes:
\begin{alignat*}{2}
r_{i-3}(\lambda_{i-2},\mu_{i-2})^4 (r_{i-1}(\lambda_i,\mu_i)^2r_{i-2}-
 r_{i-1}q_{i})=r_{i-2}(\lambda_{i-1},\mu_{i-1})^{2}
L_{\lambda_i,\mu_i}^2 \tilde{r}_{i}.
\end{alignat*}
By Lemma \ref{irredcoeff} (iii), $r_{i-3}(\lambda_{i-2},\mu_{i-2})$ and $r_{i-2}(\lambda_{i-1},\mu_{i-1})$
are irreducible. Since their degrees are different, they are prime to each other,
so that $r_{i-3}(\lambda_{i-2},\mu_{i-2})^4| \tilde{r}_{i}$. Dividing by 
$r_{i-3}(\lambda_{i-2},\mu_{i-2})^4$ leads to an identity of the required form \eqref{identityi}.
\end{proof}

\begin{prop}
Keep the notations of Proposition \ref{identities}. Let $0\leq i\leq d_1-1$ and $M\in\mathfrak{M}_{d_1-1-i}$. 
Then $r_i^{(M)}\in H^0(\bar{H}_{d_1}\times\bar{H}_{d_2},\mathcal{O}(d_2-d_1+1+i,1+i))$
comes from a section in $H^0(\bar{H}_{d_1,d_2},\mathcal{O}(d_2-d_1+1+i,1+i))$
via the rational map $\bar{H}_{d_1}\times\bar{H}_{d_2}\dashrightarrow\bar{H}_{d_1,d_2}$.
\end{prop}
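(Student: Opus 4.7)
The plan is to identify sections of $\mathcal{O}(l_1,l_2)$ on $\bar{H}_{d_1,d_2}$ with the $\Gamma$-invariant sections on $\bar{H}_{d_1}\times\bar{H}_{d_2}$, where $\Gamma:=S_{d_2-d_1}$ acts by $K\cdot(F,G)=(F,G+KF)$. The rational map $\bar{H}_{d_1}\times\bar{H}_{d_2}\dashrightarrow\bar{H}_{d_1,d_2}$ realizes its target as a geometric quotient by $\Gamma$ on an open set of codimension $\geq 2$, and normality of the source then identifies $\Gamma$-invariant sections with pullbacks from downstairs. It therefore suffices to show, by induction on $i$, that for $(F,G)$ in a dense open locus and any $K\in S_{d_2-d_1}$ one has $r_i(F,G)=r_i(F,G+KF)$ as elements of $\mathbbm{k}[X_0,X_1]$. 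Throughout I write $R_j:=r_j(F,G)$, $R_j':=r_j(F,G+KF)$, and similarly for $Q_j,Q_j'$.

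\textbf{Base case ($i=0$).} Specializing \eqref{identity1} at $(F,G)$ and at $(F,G+KF)$ and subtracting gives
\[
F\cdot\bigl[F(\lambda_0,\mu_0)^{d_2-d_1+1}K-(Q_0'-Q_0)\bigr] \;=\; L_{\lambda_0,\mu_0}^{d_2-d_1+1}(R_0'-R_0).
\]
On the dense open set $\{F(\lambda_0,\mu_0)\neq 0\}$, the forms $F$ and $L_{\lambda_0,\mu_0}$ are coprime in $\mathbbm{k}[X_0,X_1]$, so $F\mid(R_0'-R_0)$; the degree comparison $\deg R_0=d_1-1<d_1=\deg F$ then forces $R_0=R_0'$, and by normality the identity extends to all of $\bar{H}_{d_1}\times\bar{H}_{d_2}$.

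\textbf{Inductive step ($i\geq 1$).} Assuming $r_0,\dots,r_{i-1}$ are already $\Gamma$-invariant, specialize \eqref{identityi} (or \eqref{identity2} when $i=1$) at the two points and subtract, using $R_{i-1}'=R_{i-1}$ and (for $i\geq 2$) $R_{i-2}'=R_{i-2}$, to obtain
\[
R_{i-1}(Q_i'-Q_i) \;=\; R_{i-2}(\lambda_{i-1},\mu_{i-1})^2\, L_{\lambda_i,\mu_i}^2\,(R_i-R_i'),
\]
with the analogous formula in the case $i=1$, where $R_{i-2}(\lambda_{i-1},\mu_{i-1})$ is replaced by $F(\lambda_0,\mu_0)^{d_2-d_1+1}$. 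Lemma~\ref{irredcoeff}(i) guarantees that on a dense open set neither $R_{i-2}(\lambda_{i-1},\mu_{i-1})$ nor $R_{i-1}(\lambda_i,\mu_i)$ vanishes: the first makes the scalar factor a unit, and the second rules out $L_{\lambda_i,\mu_i}\mid R_{i-1}$. Hence $R_{i-1}$ is coprime to $R_{i-2}(\lambda_{i-1},\mu_{i-1})^2 L_{\lambda_i,\mu_i}^2$, so $R_{i-1}\mid(R_i-R_i')$; the bound $\deg R_i=d_1-1-i<d_1-i=\deg R_{i-1}$ forces $R_i=R_i'$.

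\textbf{Main difficulty.} Essentially all of the content is packaged in Lemma~\ref{irredcoeff}(i), which supplies exactly the non-vanishing required for each coprimality step. Once that is available, the identities of Proposition~\ref{identities} function as Euclidean-division relations that pin down $r_i$ uniquely modulo multiples of $r_{i-1}$ (or of $f$ in the base case), while the degree bound excludes any non-trivial such multiple. The only delicacy is accommodating the slightly different shapes of \eqref{identity1}, \eqref{identity2} and \eqref{identityi} in the first two inductive steps, but the structural argument is uniform.
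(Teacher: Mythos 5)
Your proof is correct, but it takes a genuinely different (and longer) route than the paper's. The paper's entire argument is the structural observation that the identities \eqref{identity2} and \eqref{identityi} never involve $g$ again after the first step: by construction the coefficients $r_i^{(M)}$ for $i>0$ are rational functions of the $r_0^{(M)}$ and the $f^{(M)}$ alone, so once the case $i=0$ is settled --- and that case is exactly the computation already carried out in the proof of Proposition \ref{cont1} --- invariance under $G\mapsto G+KF$ propagates to all $i$ for free, with no induction and no appeal to Lemma \ref{irredcoeff}. You instead re-prove invariance at each stage by a Euclidean-division uniqueness argument: subtract the two specialized identities, invoke Lemma \ref{irredcoeff} (i) to guarantee that $R_{i-1}$ is prime to $L_{\lambda_i,\mu_i}^2$ and that the scalar factors are nonzero on a dense open set, and conclude from the degree drop $\deg r_i<\deg r_{i-1}$. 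This is sound: the descent of $\Gamma$-invariant bihomogeneous sections along $\bar{H}_{d_1}\times\bar{H}_{d_2}\dashrightarrow\bar{H}_{d_1,d_2}$ is precisely the mechanism the paper uses implicitly in Proposition \ref{cont1}, and your coprimality and degree counts check out (agreement on a dense open set suffices since everything is polynomial in the coefficients, so ``normality'' is more than you need). The trade-off is that your version makes the proposition depend on Lemma \ref{irredcoeff} (i), whereas the paper's one-line reduction does not, and it duplicates work that the recursion's shape already does for you.
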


\begin{proof}
Since, by construction, the $r_{i}^{(M)}$ for $i>0$ are rational
functions in the $r_{0}^{(M)}$ and the $f^{(M)}$, it suffices to treat the case $i=0$.

But this case has already been dealt with in the proof of Proposition \ref{cont1}.
\end{proof}

We denote by $\Lambda_i$ be the linear system on $\bar{H}_{d_1,d_2}$ generated by the $r^{(i)}_M$
for all possible choices of scalars $\lambda_0,\dots,\lambda_{d_1-1},\mu_0,\dots,\mu_{d_1-1}$ and of monomials $M$.

\begin{prop}\label{baselocus}
If $0\leq i\leq d_1-1$, the base locus of $\Lambda_i$ is $W_i$.
\end{prop}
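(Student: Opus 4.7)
The strategy is to prove the two inclusions separately. The inclusion $W_i \subset \mathrm{Bs}(\Lambda_i)$ is immediate from Lemma \ref{irredcoeff}(ii), since every generator $r_i^{(M)}$ of $\Lambda_i$ vanishes on $\tilde{W}_i$ and therefore on its image $W_i \subset \bar{H}_{d_1,d_2}$.

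For the reverse inclusion, I would fix $[F,G] \in \bar{H}_{d_1,d_2} \setminus W_i$, so that $\gcd(F,G)$ has degree at most $d_1 - 1 - i$, and construct scalars $(\lambda_0,\mu_0),\dots,(\lambda_i,\mu_i)$ such that the polynomials $R_0,\dots,R_i$ obtained by substituting $(F,G)$ and these scalars into $r_0,\dots,r_i$ are all nonzero; then some coefficient $R_i^{(M)}$ is nonzero, proving $[F,G] \notin \mathrm{Bs}(\Lambda_i)$. First pick $(\lambda_0,\mu_0)$ with $F(\lambda_0,\mu_0) \neq 0$, so that \eqref{identity1} uniquely determines $R_0$; since $F \nmid G$, this $R_0$ is nonzero. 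Inductively, if $R_0,\dots,R_k$ are all nonzero and $k<i$, then $R_k$ is a nonzero element of $S_{d_1-1-k}$, so one can choose $(\lambda_{k+1},\mu_{k+1})$ with $R_k(\lambda_{k+1},\mu_{k+1}) \neq 0$; substituting into \eqref{identity2} (if $k=0$) or \eqref{identityi} (if $k \geq 1$) then determines $R_{k+1}$.

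The heart of the argument is the claim that this $R_{k+1}$ is again nonzero. Reading the identity modulo $R_k$ shows that $R_{k+1} = 0$ if and only if $R_k \mid R_{k-1}$ (with the convention $R_{-1}=F$). To translate this into a statement about $\gcd(F,G)$, I would establish by induction on $k$ the Euclidean-algorithm invariant
\[
\gcd(R_k, R_{k-1}) = c_k \cdot \gcd(F,G) \text{ for some } c_k \in \mathbbm{k}^\times.
\]
The base case $\gcd(F, R_0) = \gcd(F,G)$ follows by reducing \eqref{identity1} modulo $F$ and using $\gcd(F, L_{\lambda_0,\mu_0}) = 1$; the inductive step is analogous, exploiting the coprimality $\gcd(R_k, L_{\lambda_{k+1},\mu_{k+1}}) = 1$ guaranteed by the choice of parameters. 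Granted this invariant, $R_k \mid R_{k-1}$ forces $R_k$ to be a scalar multiple of $\gcd(F,G)$, so $\deg R_k = \deg \gcd(F,G)$. But $\deg R_k = d_1 - 1 - k \geq d_1 - i > \deg \gcd(F,G)$ for $k \leq i-1$, a contradiction; hence $R_{k+1} \neq 0$, and iterating up to $k+1 = i$ yields $R_i \neq 0$.

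The main difficulty is maintaining the $\gcd$ invariant through the formal identities: the various scalar factors $R_{k-1}(\lambda_k,\mu_k)^2$ and linear-form factors $L_{\lambda_k,\mu_k}$ must be shown to be coprime to the appropriate $R_k$, which is exactly where the successive non-degeneracy conditions on the $(\lambda_k,\mu_k)$ are used. Once this is in place, the degree bookkeeping is routine, and the argument closely parallels (and refines) the proof of Lemma \ref{irredcoeff}(i), which established only that $r_j(\lambda,\mu)$ does not vanish identically on $\tilde{W}_{j+1}$.
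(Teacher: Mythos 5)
Your proof is correct and follows essentially the same route as the paper: the inclusion $W_i\subset\mathrm{Bs}(\Lambda_i)$ via Lemma \ref{irredcoeff}(ii), then the iterative choice of $(\lambda_k,\mu_k)$ with $R_{k-1}(\lambda_k,\mu_k)\neq 0$ producing a nonzero $R_i$. The only difference is that you spell out the non-vanishing of each $R_{k+1}$ via the $\gcd$ invariant, a step the paper leaves implicit in this proof (the same divisibility-plus-degree argument appears explicitly in its proof of Lemma \ref{irredcoeff}(i)).
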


\begin{proof}
Lemma \ref{irredcoeff} (ii) shows that $W_i$ is included in the base locus of $\Lambda_i$. It remains to show the other inclusion.
Suppose that $(F,G)\notin \tilde{W}_i$. Choose $(\lambda_0,\mu_0)$ such that $F(\lambda_0,\mu_0)\neq 0$.  By substituting the coefficients
of $F$ and $G$ in the $f^{(M)}$ and the $g^{(M)}$ in \eqref{identity1}, one obtains a polynomial $R_0$. By choice of $(\lambda_0,\mu_0)$,
and since $(F,G)\notin \tilde{W}_0$, $R_0\neq 0$. Choose $(\lambda_1,\mu_1)$ such that
$R_0(\lambda_1,\mu_1)\neq 0$, and use \eqref{identity2} to construct a polynomial $R_1$. Iterating this process, one eventually chooses
$\lambda_0,\dots,\lambda_i, \mu_0,\dots,\mu_i$ inducing a non-zero $R_i$. If $M$ is a monomial with non-zero coefficient in $R_i$, $r_i^{(M)}$
induces a section in $\Lambda_i$ that does not vanish on $[F,G]$, as wanted.
\end{proof}

\begin{rem} \label{twisted}
When $(\lambda_u,\mu_u)=(1,0)$ for all $u$, the identities provided by Proposition \ref{identities}
really are Euclid's algorithm formally applied to $g$ and $f$ (with quotients $q_i$ and remainders $r_i$).
It was however necessary for our purposes to authorize variants of this algorithm, that is to
allow $(\lambda_u,\mu_u)$ to depend on $u$.

Indeed, if we had insisted that $(\lambda_u,\mu_u)$ did not depend on $u$, the linear systems
$\Lambda'_i$ we would have constructed would have been too small
in finite charac\-teristic. For instance, it may be checked that,
in characteristic $p\geq 3$, $[X_0^p,X_1^{2p}]$ would have been in the base locus of $\Lambda'_1$, so that
Proposition \ref{baselocus}  would not have held.
\end{rem}

\subsection{Base-point freeness}
In this paragraph, we will show that $\Lambda_i$ induces a base-point free linear system $\hat{\Lambda}_i$ on $\hat{H}_{d_1,d_2}$.
Since, by Proposition \ref{baselocus}, the base locus of $\Lambda_i$ is $W_i$, we will need to study $\Lambda_i$
around the $W_k$ for $1\leq k\leq i$. 

For this purpose, we introduce
homogeneous polynomials $p,l,h,\phi$ and $\gamma$ in $X_0,X_1$ of respective degrees $d_1-k,k, d_2-d_1+k,d_1$ and $d_2$,
with indeterminate coefficients $p^{(M)},l^{(M)},h^{(M)},\phi^{(M)}$ and $\gamma^{(M)}$. We define $f=pl+t\phi$ and
$g=ph+t\gamma$, and we let $f^{(M)}$ and $g^{(M)}$ be their coefficients.
Substituting those values in the indeterminates $f^{(M)}$ and $g^{(M)}$ in the identities
\eqref{identity1}, \eqref{identity2} and \eqref{identityi}, we get identities in
$B[t]=\mathbbm{k}[X_s,p^{(M)},l^{(M)},h^{(M)},\phi^{(M)},\gamma^{(M)}][t]$. In all this paragraph,
$r_i,q_i$ will be viewed in this way as elements of $B[t]$. By convention, we define $r_{-1}:=f$ and $r_{-2}:=g$.

Studying these identities at the lowest order in $t$
will give informations about $\Lambda_i$ at the point $[pl,ph]\in W_k$ in the
tangent direction $[\phi,\gamma]$.
If $b\in B[t]$, we will write $b=\sum_lb^{[l]}t^l$ with $b^{[l]}\in B$:
$b^{[l]}$ is the order $l$ term of $b$.

The main idea is that, when applying Proposition \ref{identities} to $g=ph+t\gamma$ and $f=pl+t\gamma$,
the $k$ first remainders are related to the remainders of
Proposition \ref{identities} applied to $h$ and $l$ (Lemma \ref{dividiv}),
and the $d_1-k$ last remainders are related to the remainders
of Proposition \ref{identities} applied to $\gamma l-h\phi$ and $p$
(Lemma \ref{dividiv2}).

\vspace{1em}

If $0\leq i\leq k-1$, we define $\bar{r}_i$ and $\bar{q}_i$ to be the remainders
and the quotients obtained by applying Proposition \ref{identities}
to $h$ and $l$ (instead of to $g$ and $f$) using the scalars
$\lambda_0,\dots,\lambda_{k},\mu_0,\dots,\mu_{k}$.

\begin{lem}\label{dividiv}
For $0\leq i\leq k-1$, $r_i^{[0]}=p(\lambda_0,\mu_0)^{d_2-d_1+1}\prod_{j=1}^ip(\lambda_j,\mu_j)^2p\bar{r}_i$.
\end{lem}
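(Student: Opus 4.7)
The plan is to proceed by induction on $i$, comparing the identities \eqref{identity1}, \eqref{identity2} and \eqref{identityi} at order zero in $t$ with the analogous identities obtained by applying Proposition \ref{identities} to the pair $(h,l)$ with the same scalars $\lambda_u,\mu_u$; denote the resulting remainders and quotients by $\bar r_i$ and $\bar q_i$. The key observation is that $f^{[0]}=pl$ and $g^{[0]}=ph$ share the common factor $p$, so the modified Euclidean algorithm applied to $(g^{[0]}, f^{[0]})$ should reduce to the one applied to $(h, l)$, up to accumulating factors of $p$ and of the $p(\lambda_j,\mu_j)$.

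For the base case $i=0$, the identity \eqref{identity1} at $t=0$ reads
\[
p(\lambda_0,\mu_0)^{d_2-d_1+1}l(\lambda_0,\mu_0)^{d_2-d_1+1}(ph) = (pl)\, q_0^{[0]} + L_{\lambda_0,\mu_0}^{d_2-d_1+1}\, r_0^{[0]}.
\]
Multiplying $l(\lambda_0,\mu_0)^{d_2-d_1+1} h = l\bar q_0 + L_{\lambda_0,\mu_0}^{d_2-d_1+1}\bar r_0$ through by $p(\lambda_0,\mu_0)^{d_2-d_1+1}\cdot p$ produces a second decomposition of the same left-hand side, whose remainder is $p(\lambda_0,\mu_0)^{d_2-d_1+1}p\bar r_0$. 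The writing $A=(pl)Q+L_{\lambda_0,\mu_0}^{d_2-d_1+1}R$ with $\deg_X R\le d_1-1$ is unique whenever $(pl)(\lambda_0,\mu_0)\neq 0$, since then $L_{\lambda_0,\mu_0}^{d_2-d_1+1}$ would have to divide $(pl)(Q-Q')$, forcing $Q=Q'$ for degree reasons. Matching the two decompositions on this dense open subset of parameter space and extending by density yields the formula in $B$.

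For the inductive step ($i\ge 2$), I would substitute the inductive expressions for $r_{i-1}^{[0]}$ and $r_{i-2}^{[0]}$ into \eqref{identityi} at $t=0$ and compare with the identity $\bar r_{i-1}(\lambda_i,\mu_i)^2 \bar r_{i-2}=\bar r_{i-1}\bar q_i+\bar r_{i-2}(\lambda_{i-1},\mu_{i-1})^2 L_{\lambda_i,\mu_i}^2 \bar r_i$ multiplied by a suitable monomial in the $p(\lambda_j,\mu_j)$ and by $p$. Using $\prod_{j=1}^i p(\lambda_j,\mu_j)^2=p(\lambda_i,\mu_i)^2\prod_{j=1}^{i-1}p(\lambda_j,\mu_j)^2$, the exponents match on both sides, giving two decompositions of $r_{i-1}^{[0]}(\lambda_i,\mu_i)^2 r_{i-2}^{[0]}$ of the form $r_{i-1}^{[0]}Q+L_{\lambda_i,\mu_i}^2 R$ with $\deg_X Q\le 1$. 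Uniqueness of this writing (valid when $r_{i-1}^{[0]}(\lambda_i,\mu_i)\neq 0$, which holds generically thanks to Lemma \ref{irredcoeff}) and density then identify $r_i^{[0]}$ with the claimed expression. The case $i=1$ based on \eqref{identity2} is handled in the same manner, with $f=r_{-1}$ playing the role of the secondary polynomial.

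The main subtlety is organizing the bookkeeping of the $p(\lambda_j,\mu_j)$-factors so that the two sides really have matching leading coefficients; once that is done, the uniqueness step itself is routine degree-counting, carried out on the generic locus where the relevant value of $(pl)$ or of $r_{i-1}^{[0]}$ does not vanish and extended to all of $B$ by density.
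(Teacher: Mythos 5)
Your proof is correct and takes essentially the same route as the paper: induction on $i$, comparing the order-zero part in $t$ of the identities for the pair $(pl+t\phi,\,ph+t\gamma)$ with the corresponding identities for $(h,l)$, and matching them term by term after inserting the accumulated factors of $p$ and $p(\lambda_j,\mu_j)$. The only difference is in how the matching is justified — you use uniqueness of the division-with-remainder decomposition on the dense locus where $(pl)(\lambda_0,\mu_0)$, resp.\ $r_{i-1}^{[0]}(\lambda_i,\mu_i)$, is nonzero (nonvanishing being guaranteed by Lemma \ref{irredcoeff} and the inductive formula) and then extend by density, whereas the paper observes that both sets of identities are produced by the very same algorithm; both justifications are sound and the bookkeeping of the $p(\lambda_j,\mu_j)$-factors does work out as you anticipate.
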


\begin{proof}
Notice that the identities \eqref{identity1}, \eqref{identity2} and \eqref{identityi} for $h$ and $l$ (resp. for $ph$ and $pl$)
are obtained by applying the very same algorithm.
It follows that it is possible to identify term by term these two sets of identities.
For instance, \eqref{identity1} for $h$ and $l$ (resp. for $ph$ and $pl$) read:
\begin{alignat*}{2}
l(\lambda_0,\mu_0)^{d_2-d_1+1}h &= l\bar{q}_0+L_{\lambda_0,\mu_0}^{d_2-d_1+1}\bar{r}_0\\
(pl)(\lambda_0,\mu_0)^{d_2-d_1+1}ph &= plq_0^{[0]}+L_{\lambda_0,\mu_0}^{d_2-d_1+1}r_0^{[0]}.
\end{alignat*}
Identifying term by term these two identities, we get
$r_0^{[0]}=p(\lambda_0,\mu_0)^{d_2-d_1+1}p\bar{r}_0$ and
$q_0^{[0]}=p(\lambda_0,\mu_0)^{d_2-d_1+1}\bar{q}_0$, which is the $i=0$ case of what we want.
To prove the general case, we argue by induction on $i$  and compare successively
identities \eqref{identity2} and \eqref{identityi} for $2\leq i\leq k-1$ applied to
$h$ and $l$ (resp. to $ph$ and $pl$).
\end{proof}

In particular, for $0\leq i\leq k-1$, $p|r_i^{[0]}$. We define $s_i$ to be such that $r_i^{[0]}=ps_i$.

If $0\leq i\leq d_1-k-1$,
we define $\tilde{r}_i$ and $\tilde{q}_i$ to be the remainders and the
quotients obtained by applying Proposition \ref{identities}
to $(-1)^k(\gamma l-h\phi)$ and $p$ (instead of to $g$ and $f$) using the scalars
$\lambda_0,\lambda_{k+1},\dots,\lambda_{d_1-1},\mu_0,\mu_{k+1},\dots,\mu_{d_1-1}$.

\begin{lem}\label{dividiv2} Let $1\leq k\leq i\leq d_1-1$. Then:
\begin{enumerate}
\item[(i)] $t^{i-k+1}|r_i$.
\item[(ii)] If $\lambda_0=\dots=\lambda_{k}$ and $\mu_0=\dots=\mu_{k}$, $r_i^{[i-k+1]}=\bar{r}_{k-1}\tilde{r}_{i-k}$.
\end{enumerate}
\end{lem}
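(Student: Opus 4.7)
I would prove Lemma \ref{dividiv2} by simultaneous induction on $i$ for $k\le i\le d_1-1$, establishing (i) and (ii) in parallel. The two statements are naturally coupled: (ii) at index $i-1$ supplies the exact leading-order expression for $r_{i-1}$ that is needed to carry out the $t$-adic analysis of \eqref{identityi} at level $i$, so the two parts feed back into each other. The main tool is the recurrence \eqref{identityi} viewed as an identity in $B[t]$, expanded as a formal power series in $t$ and compared term by term with the defining identities \eqref{identity1} and \eqref{identityi} applied to the auxiliary pair $((-1)^k(\gamma l - h\phi),p)$. The link between the two algorithms is the observation $gl-fh=t(\gamma l-h\phi)$, which governs how the order-$t$ perturbation of $(g,f)$ gets channeled into Euclid on that secondary pair.

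For the base case $i=k$, reduce \eqref{identityi} modulo $t$. By Lemma \ref{dividiv}, $r_{k-1}^{[0]}=ps_{k-1}$ and $r_{k-2}^{[0]}=ps_{k-2}$, where $s_{k-1}$ is a scalar multiple of $\bar{r}_{k-1}$, hence constant in $X$ (since $\bar{r}_{k-1}$ has $X$-degree $0$ in Proposition \ref{identities} applied to $(h,l)$). Consequently $r_{k-1}^{[0]}$ divides $r_{k-1}^{[0]}(\lambda_k,\mu_k)^2 r_{k-2}^{[0]}$ exactly as a polynomial in $X$, and the uniqueness of the quotient and remainder in the Euclidean division forces $r_k^{[0]}=0$, proving (i) at this index. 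For (ii), extract the coefficient of $t^1$ in the same recurrence: the resulting linear identity expresses $r_k^{[1]}$ in terms of data at $t$-orders $0$ and $1$ which Lemma \ref{dividiv} and the order-$0$ analysis already control. Under the specialization $\lambda_0=\dots=\lambda_k$, $\mu_0=\dots=\mu_k$, substituting $gl-fh=t(\gamma l-h\phi)$ and regrouping then matches the expression with the defining identity \eqref{identity1} of $\tilde{r}_0$ applied to $((-1)^k(\gamma l-h\phi),p)$, yielding $r_k^{[1]}=\bar{r}_{k-1}\tilde{r}_0$.

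For the inductive step at $i>k$, apply \eqref{identityi} at level $i$ and extract the coefficient of $t^{i-k+1}$. The inductive hypothesis asserts $t^{i-k}\mid r_{i-1}$ and $t^{i-k-1}\mid r_{i-2}$, and moreover identifies their leading terms as $r_{i-1}^{[i-k]}=\bar{r}_{k-1}\tilde{r}_{i-k-1}$ and $r_{i-2}^{[i-k-1]}=\bar{r}_{k-1}\tilde{r}_{i-k-2}$. These are precisely the inputs to the $(i-k)$-th step of Proposition \ref{identities} applied to the pair $((-1)^k(\gamma l-h\phi),p)$, whose defining recurrence uniquely pins down $\tilde{r}_{i-k}$. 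Matching both sides of \eqref{identityi} at the appropriate $t$-order then simultaneously forces $r_i^{[j]}=0$ for all $j\le i-k$ (proving (i) at index $i$) and identifies $r_i^{[i-k+1]}$ with $\bar{r}_{k-1}\tilde{r}_{i-k}$ (proving (ii) at index $i$).

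The main obstacle is the careful bookkeeping. In parallel with (i) and (ii) one must track the $t$-adic valuation of the quotients $q_i$, since the cross term $r_{i-1}q_i$ in \eqref{identityi} threatens to pollute the matching at the desired order; the natural companion statement is that $q_i$ itself acquires increasing powers of $t$ as $i$ grows past $k$ (starting from $t\mid q_{k+1}$, which in fact follows from the order-$1$ analysis together with the claim $r_k^{[1]}=\bar{r}_{k-1}\tilde{r}_0\ne 0$). One must also verify that the scalar normalizations coming from Lemma \ref{dividiv}, the constant factor $\bar{r}_{k-1}$, the sign $(-1)^k$, and the particular parametrization $\lambda_0,\lambda_{k+1},\dots,\lambda_{d_1-1}$ of the auxiliary algorithm all combine consistently with the normalization built into Proposition \ref{identities}, which is delicate because the denominators $r_{i-3}(\lambda_{i-2},\mu_{i-2})^4$ appearing in the construction of Proposition \ref{identities} must cancel cleanly after matching.
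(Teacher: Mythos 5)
Your overall strategy is the paper's: a simultaneous induction on $i$ coupling (i) and (ii), driven by the $t$-adic expansion of \eqref{identityi} and the identification of the leading terms with the secondary algorithm applied to $(-1)^k(\gamma l-h\phi)$ and $p$ (the observation $gl-fh=t(\gamma l-h\phi)$ is indeed the conceptual engine, and your companion estimate on the $t$-adic valuation of the $q_i$ is exactly the bookkeeping the paper does via the bidegree-$(1,1)$ structure of the coefficients of $q_i$). However, two steps of your plan would not go through as stated. First, in the inductive step you propose to extract the coefficient of $t^{i-k+1}$ in \eqref{identityi}; but every term of that identity is a product whose $t$-adic valuation is at least $3(i-k)-1$ (e.g.\ $r_{i-1}(\lambda_i,\mu_i)^2r_{i-2}$ has valuation $2(i-k)+(i-k-1)$), so the coefficient of $t^{i-k+1}$ is identically zero once $i>k+1$ and carries no information. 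The correct order to read off is $3i-3k-1$, and one must also note that $r_{i-2}^{[i-k-1]}(\lambda_{i-1},\mu_{i-1})\neq 0$ is only directly available (via (ii) at a lower index and Lemma \ref{irredcoeff}) under the constraint $\lambda_0=\dots=\lambda_k$, so part (i) must first be proved for general parameters and then extended to all parameters by specialization.

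Second, and more seriously, in the base case $i=k$ you claim that extracting the $t^1$ coefficient of the single identity at step $k$ expresses $r_k^{[1]}$ in terms of data ``which Lemma \ref{dividiv} and the order-$0$ analysis already control.'' That is not so: the $t^1$ coefficient of \eqref{identityi} at $i=k$ involves $r_{k-1}^{[1]}$, $r_{k-2}^{[1]}$ and $q_k^{[1]}$, and Lemma \ref{dividiv} is a purely order-$0$ statement that says nothing about these. Identifying $r_k^{[1]}$ with $\bar{r}_{k-1}\tilde{r}_0$ requires propagating the first-order perturbation through \emph{all} $k$ preceding division steps; the paper does this by an auxiliary induction on $0\le j\le k$ producing a chain of identities of the shape $r_{j-1}^{[0]}(\lambda_0,\mu_0)^2(\gamma l-h\phi)=a_jp+(-1)^jL_{\lambda_0,\mu_0}^{d_2-d_1+2j+1}(s_{j-1}r_j^{[1]}-s_jr_{j-1}^{[1]})$, obtained by combining the order-$0$ and order-$1$ parts of each defining identity, and only at $j=k$ (where $s_k=0$) does one land on an identity comparable with the defining relation of $\tilde{r}_0$. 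This auxiliary induction is a genuine missing ingredient of your plan, not mere bookkeeping; without it the ``regrouping'' you invoke has nothing to regroup. The final comparison step you describe (uniqueness of the quotient/remainder pair, read off from the order of vanishing at $(\lambda_i,\mu_i)$) is correct and matches the paper.
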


\begin{proof} Let us fix $k$, we will use induction on $i$ and start by proving the case $i=k$. Write down
\eqref{identityi} for $i=k$ at order $0$:
$$r_{k-1}^{[0]}(\lambda_k,\mu_k)^2r^{[0]}_{k-2}
=r^{[0]}_{k-1}q^{[0]}_k+r^{[0]}_{k-2}(\lambda_{k-1},\mu_{k-1})^2 L_{\lambda_k,\mu_k}^2 r^{[0]}_k.$$
By Lemma \ref{dividiv}, $p|r^{[0]}_{k-2}$ and  $p|r_{k-1}^{[0]}$. 
Since $r^{[0]}_{k-2}(\lambda_{k-1},\mu_{k-1})\neq 0$ (use Lemma \ref{dividiv} and Lemma \ref{irredcoeff} (i)),
it follows that $p|r^{[0]}_k$, hence that $r^{[0]}_k=0$ for degree reasons, proving (i).

By Lemma \ref{dividiv}, $\bar{r}_{k-1}|r_{k-1}^{[0]}$. Since, by construction, the coefficients of $q^{[0]}_k$
are polynomials of bidegree $(1,1)$ in the coefficients of $r_{k-1}^{[0]}$ and $r_{k-2}^{[0]}$,
$\bar{r}_{k-1}|q_{k}^{[0]}$. Now write down \eqref{identityi} for $i=k$ at order $1$: 
\begin{alignat*}{2}
 &r_{k-1}^{[0]}(\lambda_k,\mu_k)^2r^{[1]}_{k-2}+2r_{k-1}^{[0]}(\lambda_k,\mu_k)r_{k-1}^{[1]}(\lambda_k,\mu_k)r^{[0]}_{k-2} \\
=r^{[0]}_{k-1}q^{[1]}_k&+r^{[1]}_{k-1}q^{[0]}_k+r^{[0]}_{k-2}(\lambda_{k-1},\mu_{k-1})^2 L_{\lambda_k,\mu_k}^2 r^{[1]}_k.
\end{alignat*}

Since $\bar{r}_{k-1}|r_{k-1}^{[0]}$, $\bar{r}_{k-1}|q_{k}^{[0]}$ and $\bar{r}_{k-1}$ is prime to
$r^{[0]}_{k-2}(\lambda_{k-1},\mu_{k-1})$ (use Lemma \ref{dividiv} and notice that
$\bar{r}_{k-1}$ and $\bar{r}_{k-2}(\lambda_{k-1},\mu_{k-1})$ are prime to each other since they are irreducible
by Lemma \ref{irredcoeff} (i) but of different degrees), we see that $\bar{r}_{k-1}|r^{[1]}_k$.
We will write $r^{[1]}_k=\bar{r}_{k-1}\rho_0$.
 
Now assume that $\lambda_0=\dots=\lambda_{k}$ and $\mu_0=\dots=\mu_{k}$.
Let us prove by induction on $0\leq j\leq k$
that there exists $a_j\in B$ such that:
\begin{alignat*}{2}\label{divdivinterm}
(pl)(\lambda_0,\mu_0)^{d_2-d_1+1}(\gamma l-h\phi)
&=a_0 p+L_{\lambda_0,\mu_0}^{d_2-d_1+1}(lr_0^{[1]}-s_0\phi)\\
r_{j-1}^{[0]}(\lambda_0,\mu_0)^2(\gamma l-h\phi)
&=a_j p+(-1)^jL_{\lambda_0,\mu_0}^{d_2-d_1+2j+1}(s_{j-1}r_j^{[1]}-s_jr_{j-1}^{[1]})\textrm{ if }j>0.
\end{alignat*}
To prove the $j=0$ case, write down \eqref{identity1} at order $0$ and $1$ to get:
\begin{alignat*}{2}
(pl)(\lambda_0,\mu_0)^{d_2-d_1+1}h&=lq_0^{[0]}+L_{\lambda_0,\mu_0}^{d_2-d_1+1}s_0 \\
(pl)(\lambda_0,\mu_0)^{d_2-d_1+1}\gamma&=\phi q_0^{[0]}+L_{\lambda_0,\mu_0}^{d_2-d_1+2j+1}r^{[1]}_0 +a'_0p.
\end{alignat*}
Combining these two identities leads to an identity of the required form for $j=0$.
To obtain the required identity for $j$ from the one for $j-1$,
modify it using a suitable combination of the order
$0$ and $1$ terms of \eqref{identity2} if $j=1$ or \eqref{identityi} for $i=j$ if $j>1$.

When $j=k$, remember from above that $s_k=0$, use the expressions for $s_{j-1}$ and $r_{j-1}^{[0]}(\lambda_0,\mu_0)$ obtained
in Lemma \ref{dividiv}, and divide by an appropriate commom factor to obtain an expression of the form:
\begin{equation*}
p(\lambda_0,\mu_0)^{d_2-d_1+2k+1}(\gamma l-h\phi)=bp+(-1)^kL_{\lambda_0,\mu_0}^{d_2-d_1+2k+1}\rho_0.
\end{equation*}
On the other hand, we have:
\begin{equation*}
p(\lambda_0,\mu_0)^{d_2-d_1+2k+1}(-1)^k(\gamma l-h\phi)=\tilde{q}_0p+L_{\lambda_0,\mu_0}^{d_2-d_1+2k+1}\tilde{r}_0.
\end{equation*}
Combining these two equations,
we get $(b-(-1)^k\tilde{q}_0)p=(-1)^k(\tilde{r}_0-\rho_0)L_{\lambda_0,\mu_0}^{d_2-d_1+2k+1}$. For the left-hand side
to vanish at order $d_2-d_1+2k+1$ at $(\lambda_0,\mu_0)$, we need to have $b=\tilde{q}_0$, hence $\rho_0=\tilde{r}_0$.
This proves (ii) and finishes the $i=k$ case.

Suppose from now on that the statement holds for $i-1$ and let us check that it holds for $i$. Consider
equation \eqref{identityi}:
\begin{equation}\label{encore}
 r_{i-1}(\lambda_i,\mu_i)^2r_{i-2}=r_{i-1}q_i+r_{i-2}(\lambda_{i-1},\mu_{i-1})^2 L_{\lambda_i,\mu_i}^2 r_i.
\end{equation}
By induction, $t^{i-k}|r_{i-1}$ and $t^{i-k-1}|r_{i-2}$. Since, by construction, the coefficients of $q_i$
are polynomials of bidegree $(1,1)$ in the coefficients of $r_{i-1}$ and $r_{i-2}$, $t^{2i-2k-1}|q_i$.
Notice that, applying (ii) by induction and Lemma \ref{irredcoeff} (i),
$r^{[i-k-1]}_{i-2}(\lambda_{i-1},\mu_{i-1})\neq 0$ in the particular
case when $\lambda_0=\dots=\lambda_{k}$ and $\mu_0=\dots=\mu_{k}$.
Hence, $r^{[i-k-1]}_{i-2}(\lambda_{i-1},\mu_{i-1})\neq 0$ for general values of the $\lambda_u,\mu_u$. It follows
from \eqref{encore} that for general values of the $\lambda_u,\mu_u$, $t^{i-k+1}|r_i$. By specialization,
this holds for any values of the $\lambda_u,\mu_u$, proving (i).

To check (ii), assume that $\lambda_0=\dots=\lambda_{k}$ and $\mu_0=\dots=\mu_{k}$
and write \eqref{encore} at order $3i-3k-1$ in $t$:
\begin{equation}\label{toujours}
 r_{i-1}^{[i-k]}(\lambda_i,\mu_i)^2r_{i-2}^{[i-k-1]}
=r_{i-1}^{[i-k]}q_i^{[2i-2k-1]}+r_{i-2}^{[i-k-1]}(\lambda_{i-1},\mu_{i-1})^2 L_{\lambda_i,\mu_i}^2 r_i^{[i-k+1]}.
\end{equation}

By induction (and Lemma \ref{dividiv} if $i=k+1$),
$\bar{r}_{k-1}|r_{i-1}^{[i-k]}$ and $\bar{r}_{k-1}|r_{i-2}^{[i-k-1]}$.
Since, by construction, the coefficients of $q_i$
are polynomials of bidegree $(1,1)$ in the coefficients of $r_{i-1}$ and $r_{i-2}$, $\bar{r}_{k-1}^2|q_i$.
Using (ii) by induction (or Lemma \ref{dividiv} if $i=k+1$),
we see that $\bar{r}_{k-1}^2\nmid r_{i-2}^{[i-k-1]}(\lambda_{i-1},\mu_{i-1})^2$. It follows
that $\bar{r}_{k-1}|r_{i}^{[i-k+1]}$: let us write $r_{i}^{[i-k+1]}=\bar{r}_{k-1}\rho_{i-k}$.
Dividing \eqref{toujours} by $\bar{r}_{k-1}^3$, and also by
$p(\lambda_0,\mu_0)^{d_2-d_1+2k+1}$ if $i=k+1$, we get an identity of the form:
\begin{alignat*}{2}
 \tilde{r}_{0}(\lambda_{k+1},\mu_{k+1})^2p
=\tilde{r}_{0}q+p(\lambda_{0},\mu_{0})^{d_2-d_1+2k+1} L_{\lambda_{k+1},\mu_{k+1}}^2\rho_{1}\textrm{ if }i=k+1,\\
\tilde{r}_{i-k-1}(\lambda_i,\mu_i)^2\tilde{r}_{i-k-2}
=\tilde{r}_{i-k-1}q+\tilde{r}_{i-k-2}(\lambda_{i-1},\mu_{i-1})^2 L_{\lambda_i,\mu_i}^2\rho_{i-k}\textrm{ if }i>k+1.
\end{alignat*}
On the other hand, we have:
\begin{alignat*}{2}
 \tilde{r}_{0}(\lambda_{k+1},\mu_{k+1})^2p
=\tilde{r}_{0}\tilde{q}_{1}+p(\lambda_{0},\mu_{0})^{d_2-d_1+2k+1} L_{\lambda_{k+1},\mu_{k+1}}^2\tilde{r}_{1}\textrm{ if }i=k+1,\\
\tilde{r}_{i-k-1}(\lambda_i,\mu_i)^2\tilde{r}_{i-k-2}
=\tilde{r}_{i-k-1}\tilde{q}_{i-k}+\tilde{r}_{i-k-2}(\lambda_{i-1},\mu_{i-1})^2 L_{\lambda_i,\mu_i}^2\tilde{r}_{i-k}\textrm{ if }i>k+1.
\end{alignat*}
Substracting these two equations, and considering the order of vanishing of each term at $(\lambda_i,\mu_i)$,
it follows that $\rho_{i-k}=\tilde{r}_{i-k}$, proving (ii).
\end{proof}

\begin{prop}\label{multiplicity}
If $1\leq k\leq i\leq d_1-1$, $\mult_{W_k}(\Lambda_i)\geq i-k+1$.
\end{prop}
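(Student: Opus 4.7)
The strategy is to extract the multiplicity bound from the formal divisibility $t^{i-k+1}\mid r_i$ in $B[t]$ supplied by Lemma \ref{dividiv2}(i), by interpreting $t$ as a transverse direction to $\tilde{W}_k$ and applying an arc test. I would work first on the cover $\bar{H}_{d_1}\times\bar{H}_{d_2}$, where each $r_i^{(M)}$ is a genuine polynomial and $\tilde{W}_k$ is the natural preimage of $W_k$, and descend to $\bar{H}_{d_1,d_2}$ at the end via the smooth rational map $\bar{H}_{d_1}\times\bar{H}_{d_2}\dashrightarrow\bar{H}_{d_1,d_2}$, which is a submersion on the locus $F\nmid G$ and carries $\tilde{W}_k$ onto $W_k$.

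Concretely, I would fix a generic point $x=(PL,PH)\in\tilde{W}_k$ with $\gcd(P,L,H)=1$ and a tangent vector $(\phi,\gamma)\in S_{d_1}\oplus S_{d_2}$, and form the arc
\[
\alpha\colon\mathrm{Spec}\,\mathbbm{k}[[t]]\longrightarrow S_{d_1}\oplus S_{d_2},\qquad \alpha(t)=(PL+t\phi,\,PH+t\gamma).
\]
Substituting the numerical data $(P,L,H,\phi,\gamma)$ for the generic coefficients $(p,l,h,\phi,\gamma)$ set up before the statement of the proposition produces exactly the pullback $\alpha^*r_i^{(M)}\in\mathbbm{k}[t]$, and Lemma \ref{dividiv2}(i) forces $t^{i-k+1}\mid\alpha^*r_i^{(M)}$ for any such data. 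Separately, Lemma \ref{immdef} asserts that $\tilde{\varphi}_k$ is a closed immersion at $(P,L,H)$ with normal bundle of rank $d_1-k$ explicitly described in part (iii); picking local defining equations $f_1,\dots,f_{d_1-k}$ of $\tilde{W}_k$ near $x$, the linear forms $(\phi,\gamma)\mapsto df_j(\phi,\gamma)$ jointly cut out $T_x\tilde{W}_k$, so for generic $(\phi,\gamma)$ at least one $df_j(\phi,\gamma)$ is non-zero and consequently $\alpha^*I_{\tilde{W}_k}=(t)$ in $\mathbbm{k}[[t]]$.

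Once this transversality is in place, writing any non-zero $\sigma$ vanishing along $\tilde{W}_k$ as $\sigma\equiv\sum_{|J|=m}\sigma_J f^J$ modulo $I_{\tilde{W}_k}^{m+1}$ with $m=\mathrm{mult}_{\tilde{W}_k}(\sigma)$ and some $\sigma_J(x)\neq0$, and pulling back through $\alpha^*f_j=df_j(\phi,\gamma)\,t+O(t^2)$, yields
\[
\alpha^*\sigma\equiv t^m\sum_{|J|=m}\sigma_J(x)\prod_j df_j(\phi,\gamma)^{J_j}\pmod{t^{m+1}}.
\]
The coefficient of $t^m$ is a non-zero polynomial in $(\phi,\gamma)$, so it does not vanish for generic $(\phi,\gamma)$, whence $\mathrm{ord}_t\alpha^*\sigma=m$. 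Applied to $\sigma=r_i^{(M)}$ together with the divisibility above, this gives $\mathrm{mult}_{\tilde{W}_k}(r_i^{(M)})\ge i-k+1$, and multiplicities descend to $\bar{H}_{d_1,d_2}$ to produce $\mathrm{mult}_{W_k}(\Lambda_i)\ge i-k+1$. The delicate step will be the \emph{strong} transversality $\alpha^*I_{\tilde{W}_k}=(t)$, not merely $\alpha'(0)\notin T_x\tilde{W}_k$, as this is what makes $\mathrm{ord}_t\alpha^*$ faithfully compute the multiplicity along the codimension-$(d_1-k)$ subvariety $\tilde{W}_k$; this rests essentially on the explicit normal bundle computation in Lemma \ref{immdef}(iii).
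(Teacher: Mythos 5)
Your proposal is correct and follows the same route as the paper: the entire content is Lemma \ref{dividiv2}(i), i.e.\ the divisibility $t^{i-k+1}\mid r_i$ for the formal substitution $f=pl+t\phi$, $g=ph+t\gamma$, which the paper then converts in one line into the statement that each $r_i^{(M)}$ vanishes to order $i-k+1$ along $W_k$. What you add is an explicit justification of that last conversion (the arc through a generic point of $\tilde{W}_k$ with generic velocity $(\phi,\gamma)$ computes the multiplicity exactly, using the transversality guaranteed by Lemma \ref{immdef}), which the paper leaves implicit; this is a correct filling-in of detail rather than a different argument.
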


\begin{proof} 
By Lemma \ref{dividiv2} (i), if $i\geq k$, $t^{i-k+1} |r_i^{(M)}$ for every choice of scalars $\lambda_u,\mu_u$
and of monomials $M$. This means that $r_i^{(M)}$ vanishes at order $i-k+1$ on $W_k$, as wanted.
\end{proof}

Define $\mathcal{L}_i:=\mathcal{O}(d_2-d_1+1+i,1+i)(-\sum_{k=1}^{i}(i-k+1)E_k)$: it is a line bundle on $\hat{H}_{d_1,d_2}$.
By Proposition \ref{multiplicity},
$\hat{\Lambda}_i:=(\beta_1^{d_1-1})^*\Lambda_i-\sum_{k=1}^{i}(i-k+1)E_k$ is
a linear system included in $|\mathcal{L}_i|$. We will denote by $\hat{r}_i^{(M)}$ the section of $\mathcal{L}_i$
induced by $r_i^{(M)}$. By abuse of notation, we will also denote by $r_i^{(M)}$ (resp. $\hat{r}_i^{(M)}$) the divisors
in $\Lambda_i$ (resp. $\hat{\Lambda}_i$) they induce.

\begin{lem}\label{restrli}
Let $1\leq k\leq d_1-1$ and $0\leq i\leq d_1-1$.
\begin{enumerate}
\item[(i)] $\mathcal{L}_{d_1-1}$ is trivial.
\item[(ii)] If $k\leq i$, $\mathcal{L}_i|_{E_k}\simeq p_1^*\mathcal{L}_{i-k}$. 
\item[(iii)] If $k>i$, $\mathcal{L}_i|_{E_k}\simeq p_1^*\mathcal{O}(d_2-d_1+2i+2,0)\otimes p_2^*\mathcal{L}_i$.
\end{enumerate}
\end{lem}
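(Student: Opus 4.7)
I would prove (i), (ii), and (iii) simultaneously by induction on $d_1$. The base case $d_1=1$ is immediate: (ii) and (iii) are vacuous, while (i) asserts that $\mathcal{L}_0=\mathcal{O}(d_2,1)$ is trivial on $\bar{H}_{1,d_2}$, which holds by Convention \ref{conv11}, since under that convention $\mathcal{O}(d_2,1)$ translates to $\mathcal{O}_{\mathbb{P}^1}(d_2-d_2)=\mathcal{O}_{\mathbb{P}^1}$.

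For the inductive step, I would first handle (ii) and (iii) by direct computation with the restriction formulas of Proposition \ref{blowup}. Statement (iii) is read off immediately because every $E_j$ appearing in the definition of $\mathcal{L}_i$ has $j\leq i<k$, so only $E_j|_{E_k}=p_2^*E_j$ is needed; combined with $\mathcal{O}(l_1,l_2)|_{E_k}=p_1^*\mathcal{O}(l_1+l_2,0)\otimes p_2^*\mathcal{O}(l_1,l_2)$, the formula falls out. For (ii), I would split $\sum_{j=1}^{i}(i-j+1)E_j|_{E_k}$ according to the position of $j$ relative to $k$ and collect $p_1^*$- and $p_2^*$-parts separately. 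A routine rearrangement identifies the $p_1^*$-part with $\mathcal{L}_{i-k}$ on $\hat{H}_{d_1-k,d_2+k}$, and the $p_2^*$-part with $\mathcal{L}_{k-1}$ on $\hat{H}_{k,d_2-d_1+k}$. Since $k<d_1$, the inductive hypothesis (i) kills the $p_2^*$-part, leaving $\mathcal{L}_i|_{E_k}=p_1^*\mathcal{L}_{i-k}$.

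To deduce (i), I would use the resultant: $r_{d_1-1}$ is a section of $\mathcal{O}(d_2,d_1)$ whose zero set is exactly $\Delta$, and by Proposition \ref{multiplicity} it vanishes along $W_k$ to order at least $d_1-k$, so it descends to a section $\hat{r}_{d_1-1}$ of $\mathcal{L}_{d_1-1}$ on $\hat{H}_{d_1,d_2}$. Since $r_{d_1-1}$ is nonzero outside $\Delta$, the zero divisor of $\hat{r}_{d_1-1}$ has the form $D=\sum_{k=1}^{d_1-1}c_kE_k$ with $c_k\geq 0$, and triviality of $\mathcal{L}_{d_1-1}$ is equivalent to $D=0$. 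From (ii) just established together with the inductive hypothesis (i) applied to $(d_1-k,d_2+k)$, I already know $\mathcal{L}_{d_1-1}|_{E_k}=p_1^*\mathcal{L}_{d_1-1-k}$ is trivial, so $D|_{E_k}\sim 0$ in $\Pic(E_k)$ for every $k$. I would expand $D|_{E_k}=\sum_j c_j\,E_j|_{E_k}$ via Proposition \ref{blowup} and apply the Künneth decomposition $\Pic(E_k)=p_1^*\Pic(\hat{H}_{d_1-k,d_2+k})\oplus p_2^*\Pic(\hat{H}_{k,d_2-d_1+k})$ (valid because both factors are rational); the $p_1^*$-component reads $c_k\mathcal{O}(1,-1)+\sum_{j>k}c_jE_{j-k}=0$.

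A descending induction on $k$ then finishes. For $k=d_1-1$, this reduces to $c_{d_1-1}\mathcal{O}(1,-1)=0$ on $\hat{H}_{1,d_2+d_1-1}=\mathbb{P}^1$; Convention \ref{conv11} identifies $\mathcal{O}(1,-1)$ with $\mathcal{O}_{\mathbb{P}^1}(d_1+d_2)\neq 0$, forcing $c_{d_1-1}=0$. For smaller $k$, once $c_{k+1}=\dots=c_{d_1-1}=0$ is known, the relation collapses to $c_k\mathcal{O}(1,-1)=0$ on $\hat{H}_{d_1-k,d_2+k}$, and $\mathcal{O}(1,-1)$ is nontrivial there (its pullback from the Picard rank-$2$ space $\bar{H}_{d_1-k,d_2+k}$ is nonzero), so $c_k=0$. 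The main obstacle is this final step of (i): converting the pointwise triviality of $\mathcal{L}_{d_1-1}$ on each $E_k$ into global triviality requires both the explicit resultant section (to pin down $D$ as an exceptional combination) and the Künneth-based triangular system in the $c_k$, with the degenerate terminal case $d_1-k=1$ relying on Convention \ref{conv11} to keep $\mathcal{O}(1,-1)$ nontrivial.
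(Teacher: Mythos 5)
Your proof is correct, and for parts (ii) and (iii) it is the same formal computation from Proposition \ref{blowup} combined with the inductive triviality of $\mathcal{L}_{k-1}$ on the second factor $\hat{H}_{k,d_2-d_1+k}$ that the paper uses. The only genuine divergence is in the last step of (i). Both arguments start identically: the unique member of $\Lambda_{d_1-1}$ is $\Delta$ by the degree comparison with $\mathcal{O}(\Delta)=\mathcal{O}(d_2,d_1)$, so $\mathcal{L}_{d_1-1}=\mathcal{O}(D)$ for an effective combination $D=\sum_k c_kE_k$ of exceptional divisors, and one must show $D=0$. You restrict $D$ to \emph{every} $E_k$, extract the $p_1^*$-component $c_k\mathcal{O}(1,-1)+\sum_{j>k}c_jE_{j-k}=0$ via the K\"unneth splitting of $\Pic(E_k)$, and run a descending induction on $k$ (with Convention \ref{conv11} handling the terminal case $d_1-k=1$). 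The paper instead restricts \emph{only} to $E_{d_1-1}$: there every $E_k$ with $k\leq d_1-2$ restricts to $p_2^*E_k$ by Proposition \ref{blowup}(iii), these classes are independent in $\Pic(\hat{H}_{d_1-1,d_2-1})$, and the formula $\mathcal{L}_{d_1-1}|_{E_{d_1-1}}\simeq p_2^*\mathcal{L}_{d_1-2}$ together with the induction hypothesis applied to the \emph{second} factor kills all the $c_k$ in one stroke. So the paper's conclusion is a single linear-independence observation where yours is a triangular system; yours costs a little more bookkeeping (and invokes (ii) plus the induction hypothesis on the first factors $\hat{H}_{d_1-k,d_2+k}$ for all $k$, rather than on the single space $\hat{H}_{d_1-1,d_2-1}$), but it is logically sound and arguably makes the mechanism --- that the exceptional classes remain independent after restriction --- more explicit.
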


\begin{proof} Let us first prove (i) by induction on $d_1$, the statement being trivial if $d_1=1$ (see Convention \ref{conv11}).
Take a divisor $D$ in $\Lambda_{d_1-1}$: by Proposition \ref{baselocus}, there exist some, and it necessarily contains the discriminant
$\Delta=W_{d_1-1}$. Since their degrees coincide, we have in fact
$D=\Delta$. This means that $\Lambda_{d_1-1}$ is reduced to a point: the
discriminant. As a consequence, $\hat{\Lambda}_{d_1-1}$ is reduced to a point,
that corresponds to a linear combination of $E_1,\dots, E_{d_1-2}$.
By Proposition \ref{blowup} (iii), (iv), (v) and (vi), we see that
 $\mathcal{L}_{d_1-1}|_{E_{d_1-1}}$ is isomorphic to $p_2^*\mathcal{L}_{d_1-2}$,
hence is trivial by the induction hypothesis. It follows that the linear combination
of $E_1,\dots, E_{d_1-2}$ inducing $\hat{\Lambda}_{d_1-1}$
is trivial, and that $\mathcal{L}_{d_1-1}=\mathcal{O}(\hat{\Lambda}_{d_1-1})$ is trivial.

Part (iii) is a formal consequence of 
Proposition \ref{blowup} (iii), (iv), (v) and (vi). Similarly,
$\mathcal{L}_i|_{E_k}\simeq p_1^*\mathcal{L}_{i-k}\otimes p_2^*\mathcal{L}_{k-1}$ for $k\leq i$.
Applying (i), we get (ii).
\end{proof}

\begin{prop}
If $0\leq i\leq d_1-1$, the linear system $\hat{\Lambda}_i$ is base-point free on $\hat{H}_{d_1,d_2}$.
\end{prop}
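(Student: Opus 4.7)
The plan is proof by induction on $d_1$. The base case $d_1 = 1$ is immediate: $\hat{H}_{1, d_2} = \bar{H}_{1, d_2}$, and Proposition \ref{baselocus} gives the base locus of $\hat{\Lambda}_0 = \Lambda_0$ as $W_0 = \emptyset$.

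For the inductive step, fix $x \in \hat{H}_{d_1, d_2}$; I want to produce $s \in \hat{\Lambda}_i$ with $s(x) \neq 0$. All the blow-up centers used to build $\hat{H}_{d_1, d_2}$ lie inside $\Delta = W_{d_1-1}$, so $\beta_1^{d_1-1}$ restricts to an isomorphism $\hat{H}_{d_1, d_2} \setminus \bigcup_k E_k \simeq \bar{H}_{d_1, d_2} \setminus \Delta$. When $x \notin \bigcup_k E_k$, its image $[F, G]$ lies outside $\Delta \supseteq W_i$, so Proposition \ref{baselocus} produces a section of $\Lambda_i$ non-vanishing at $[F, G]$, whose pullback to $\hat{\Lambda}_i$ is non-vanishing at $x$.

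When $x \in E_k$ for some $k \in \{1, \ldots, d_1-1\}$, I use the decomposition $E_k \simeq \hat{H}_{d_1-k, d_2+k} \times \hat{H}_{k, d_2-d_1+k}$ of Proposition \ref{blowup}(ii) to write $x = (x_1, x_2)$. It suffices to exhibit a section of $\hat{\Lambda}_i$ whose restriction to $E_k$ does not vanish at $x$. Lemma \ref{restrli} identifies $\mathcal{L}_i|_{E_k}$ in the two cases $k > i$ and $k \le i$; Lemma \ref{dividiv} (for $k > i$) and Lemma \ref{dividiv2}(ii) (for $k \le i$) then factor $\hat{r}_i^{(M)}|_{E_k}$ as a product of a first-factor and a second-factor piece. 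As the scalars $(\lambda_u, \mu_u)$ and monomial $M$ vary, the key factor runs through $\hat{\Lambda}_i$ on $\hat{H}_{k, d_2-d_1+k}$ (when $k > i$) or through $\hat{\Lambda}_{i-k}$ on $\hat{H}_{d_1-k, d_2+k}$ (when $k \le i$); since $k < d_1$, these smaller systems are base-point free by induction, so the relevant factor can be made non-zero at the corresponding factor of $x$. The complementary factor --- the product of evaluations $p(\lambda_j, \mu_j)$ when $k > i$, or the Euclid-type section $\bar{r}_{k-1}$ when $k \le i$ --- is non-zero at the other factor for generic scalar choices. The two openness conditions can be arranged simultaneously, giving the desired section.

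The main obstacle is the case $k \le i$: the identity of Lemma \ref{dividiv2}(ii) requires $\lambda_0 = \cdots = \lambda_k$, coupling the scalars used for the two factor-algorithms. In particular, the sub-family of $\hat{\Lambda}_{k-1}$ arising on the second factor is restricted to ``Euclid-at-$(\lambda_0, \mu_0)$'' sections parametrized by $\mathbb{P}^1$, and Remark \ref{twisted} shows that such restricted sub-families can have unexpectedly large base loci in finite characteristic. The delicate point is then to verify that the residual scalar freedom --- the point $(\lambda_0, \mu_0) \in \mathbb{P}^1$ together with the unconstrained $\lambda_{k+1}, \ldots, \lambda_{d_1-1}$, combined with the monomial $M$ --- still yields enough sections to realize both non-vanishing conditions at any given $(x_1, x_2)$. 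Once this simultaneous-openness argument is made precise, using the inductive base-point-freeness applied on both factors, the induction closes.
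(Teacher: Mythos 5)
Your skeleton (induction on $d_1$, reduction to the exceptional divisors, use of the product structure of $E_k$ and of Lemma \ref{dividiv2} (ii)) is the right one, but the proof is not closed: your last paragraph explicitly defers the crucial step, and the difficulty you flag there is not the actual one. There is no ``simultaneous openness'' to arrange. For $1\leq k\leq i$ (the only relevant range: by Proposition \ref{baselocus} the base locus of $\hat{\Lambda}_i$ is already contained in $\bigcup_{k\leq i}E_k$, so your discussion of $k>i$ via Lemma \ref{dividiv} is superfluous), take arbitrary scalars $\lambda'_0,\dots,\lambda'_{d_1-k-1},\mu'_0,\dots,\mu'_{d_1-k-1}$ and set $\lambda_u=\lambda'_0$ for $u\leq k$, $\lambda_u=\lambda'_{u-k}$ for $u\geq k$. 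The constraint $\lambda_0=\dots=\lambda_k$ costs nothing on the first factor: the algorithm producing $\tilde{r}_{i-k}$ uses exactly the scalars $\lambda_0,\lambda_{k+1},\dots,\lambda_{d_1-1}$, i.e.\ all of $\lambda'_0,\dots,\lambda'_{d_1-k-1}$, which remain completely free, so one obtains the \emph{full} system $\Lambda_{i-k}$ on $\hat{H}_{d_1-k,d_2+k}$, not a restricted sub-family. On the second factor, $\bar{r}_{k-1}$ is not a varying family whose non-vanishing at $x_2$ must be arranged generically: by (the proof of) Lemma \ref{restrli} (i) applied to $\bar{H}_{k,d_2-d_1+k}$, the system $\Lambda_{k-1}$ there is reduced to a single divisor, the discriminant, for \emph{every} choice of scalars and monomial; so your appeal to Remark \ref{twisted} and your worry about base loci of the restricted sub-family are beside the point.

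The step that actually finishes the argument, and that is missing from your write-up, is a line-bundle bookkeeping: via the identification of Remark \ref{identbir}, Lemma \ref{dividiv2} (ii) gives that $\hat{r}_i^{(M)}|_{E_k}$ and $p_1^*r_{i-k}^{(M)}\cdot p_2^*r_{k-1}^{(M)}$ cut the same divisor on a dense open set, and since $p_2^*r_{k-1}^{(M)}$ is the discriminant of the second factor, $\hat{r}_i^{(M)}|_{E_k}$ and $p_1^*\hat{r}_{i-k}^{(M)}$ differ by an effective combination of exceptional divisors; as both are sections of the \emph{same} line bundle $\mathcal{L}_i|_{E_k}\simeq p_1^*\mathcal{L}_{i-k}$ (Lemma \ref{restrli} (ii), which rests on the triviality of $\mathcal{L}_{k-1}$), that combination is forced to be zero and $\hat{r}_i^{(M)}|_{E_k}=p_1^*\hat{r}_{i-k}^{(M)}$ exactly. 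Hence $p_1^*\hat{\Lambda}_{i-k}\subset\hat{\Lambda}_i|_{E_k}$, and base-point freeness on $E_k$ follows from the inductive hypothesis applied to the first factor alone --- the point $x_2$ and the second factor play no role. Without this identification, your induction does not close, so as written the proposal has a genuine gap at its central step.
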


\begin{proof}
We use induction on $d_1$. By Proposition \ref{baselocus},
$\hat{\Lambda}_i$ has no base-point outside of
the $(E_k)_{1\leq k\leq i}$. Now, fix $1\leq k\leq i$. We are going to prove below that 
$p_1^*\hat{\Lambda}_{i-k}\subset\hat{\Lambda}_i|_{E_k}$ (see Lemma \ref{restrli} (ii)):
this will imply by induction that $\hat{\Lambda}_i$
has no base-point on $E_k$, and hence that it is base-point free. To do this, fix $\lambda_0',\dots,\lambda_{d_1-k-1}',
\mu_0',\dots,\mu_{d_1-k-1}'$. We need to show that the sections $p_1^*r_{i-k}^{(M)}$
associated to these $\lambda'_u,\mu'_u$ appear in $\hat{\Lambda}_i|_{E_k}$.
For this purpose, we define $\lambda_u=\lambda'_0$ and $\mu_u=\mu'_0$ if $0\leq u\leq k$ and
$\lambda_u=\lambda'_{u-k}$ and $\mu_u=\mu'_{u-k}$ if $k\leq u\leq d_1-1$.

Recall from Remark \ref{identbir} that the exceptional divisor
$E_k$ was birationally identified to $\bar{H}_{d_1-k,d_2+k}\times\bar{H}_{k,d_2-d_1+k}$
by sending a tangent vector induced by $[F,G]$
at a point $[PL,PH]\in W_k\subset \bar{H}_{d_1,d_2}$ to $([P,LG-HF],[L,H])$.
Hence, it follows from Lemma \ref{dividiv2} (ii)
that $\hat{r}_i^{(M)}$ induces on $\bar{H}_{d_1-k,d_2+k}\times\bar{H}_{k,d_2-d_1+k}$ the same divisor as
$p_1^*r_{i-k}^{(M)}\cdot p_2^*r_{k-1}^{(M)}$. Since, by the proof of Lemma \ref{restrli} (i), $r_{k-1}^{(M)}$
is the discriminant, it follows that $\hat{r}_i^{(M)}$ and $p_1^*\hat{r}_{i-k}^{(M)}$ coincide up to a combination of the exceptional divisors.
But since they are sections of the same line bundles by Lemma \ref{restrli} (ii),
this implies that they in fact coincide. Hence, $p_1^*r_{i-k}^{(M)}$ appears in $\hat{\Lambda}_i|_{E_k}$, as wanted.
\end{proof}

\subsection{The MMP for $\bar{H}_{d_1,d_2}$}

In the previous paragraph, we constructed several base-point free linear systems on $\hat{H}_{d_1,d_2}$.
Here, we describe the contractions they induce, and
use them to construct the MMP for $\bar{H}_{d_1,d_2}$.
We define $\mathcal{L}_{-1}:=\mathcal{O}(1,0)$.

\begin{prop}\label{nef}
The nef cone of $\hat{H}_{d_1,d_2}$ is simplicial, and generated by the semi-ample line bundles $(\mathcal{L}_i)_{-1\leq i\leq d_1-2}$.
\end{prop}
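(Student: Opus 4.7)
The plan is to establish three things: linear independence of the classes $\mathcal{L}_{-1},\dots,\mathcal{L}_{d_1-2}$ in $\Pic(\hat{H}_{d_1,d_2})\otimes\mathbb{R}$; their semi-ampleness, already obtained in the previous paragraph; and the fact that every nef line bundle is a non-negative combination of them. Simpliciality will then be automatic, since $d_1$ linearly independent nef rays inside a nef cone of dimension equal to the Picard rank $d_1$ must be its extremal rays. Linear independence follows from a direct determinant computation in the basis $\mathcal{O}(1,0),\mathcal{O}(0,1),E_1,\dots,E_{d_1-2}$ of $\Pic(\hat{H}_{d_1,d_2})$: the formula for $\mathcal{L}_i$ combined with $\mathcal{L}_{-1}=\mathcal{O}(1,0)$ yields a matrix whose submatrix for rows $\mathcal{L}_0,\dots,\mathcal{L}_{d_1-2}$ and columns $\mathcal{O}(0,1),E_1,\dots,E_{d_1-2}$ is lower triangular with $\pm 1$ diagonal, so the full determinant equals $\pm(d_2-d_1)\ne 0$.

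The generation statement is the main content, and I would prove it by induction on $d_1$. The base cases $d_1=1$ and $d_1=2$ are known: the first from Convention \ref{conv11} (where $\hat{H}_{1,d_2}\simeq\mathbb{P}^1$), the second from \cite{Oolqp} Th\'eor\`eme 2.7. For the induction step with $d_1\ge 3$, I write a nef line bundle $L=\sum_{i=-1}^{d_1-2}a_i\mathcal{L}_i$ and restrict it to each exceptional divisor $E_k\simeq\hat{H}_{d_1-k,d_2+k}\times\hat{H}_{k,d_2-d_1+k}$ for $k\in\{1,\dots,d_1-1\}$. Combining Lemma \ref{restrli} (for the $\mathcal{L}_i$-pieces with $i\ge 0$) and Proposition \ref{blowup}(vi) (for the $\mathcal{L}_{-1}=\mathcal{O}(1,0)$-piece), the restriction decomposes as $p_1^*A_k\otimes p_2^*B_k$, where $A_k$ and $B_k$ are explicit linear combinations of the $\mathcal{L}_j$'s on the two smaller Hilbert schemes; in particular $B_k=a_{-1}\mathcal{L}_{-1}+\sum_{i=0}^{k-2}a_i\mathcal{L}_i$ on $\hat{H}_{k,d_2-d_1+k}$, after absorbing the trivial $\mathcal{L}_{k-1}$-contribution via Lemma \ref{restrli}(i). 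Since both factors are smooth rational, $\Pic$ of the product splits and nefness of $L|_{E_k}$ is equivalent to nefness of $A_k$ and $B_k$ separately, to which the induction hypothesis applies.

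The inductive bookkeeping then unfolds as follows: nefness of $B_k$ yields $a_{-1}\ge 0$ and $a_\ell\ge 0$ for $0\le\ell\le k-2$, while nefness of $A_k$ yields $a_\ell\ge 0$ for $k\le\ell\le d_1-2$ (together with one auxiliary linear inequality that is implied by the others). Thus restriction to $E_k$ controls every $a_\ell$ except $a_{k-1}$, and as $k$ ranges over $\{1,\dots,d_1-1\}$, each index $\ell$ is missed by exactly one value of $k$; when $d_1\ge 3$ a second value always covers it, yielding $a_\ell\ge 0$ for all $\ell$. The main obstacle I anticipate is the bookkeeping itself: $\mathcal{L}_{-1}$ is defined separately from the formula for $i\ge 0$ and contributes to both $A_k$ and $B_k$ via the K\"unneth form of $\mathcal{O}(1,0)|_{E_k}$, and the trivial bundle $\mathcal{L}_{k-1}$ on the second factor must be recognized and absorbed in order to match the induction hypothesis with the correct indices on the smaller Hilbert schemes.
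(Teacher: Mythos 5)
Your proof is correct, but it takes the route dual to the paper's. Both arguments start the same way: the $(\mathcal{L}_i)_{-1\leq i\leq d_1-2}$ form a basis of $\Pic(\hat{H}_{d_1,d_2})$ (rank $d_1$) and are semi-ample, so the cone they span lies in the nef cone; and both prove the reverse inclusion by induction on $d_1$ using $E_k\simeq\hat{H}_{d_1-k,d_2+k}\times\hat{H}_{k,d_2-d_1+k}$ and the restriction formulas of Lemma \ref{restrli}. Where you restrict an arbitrary nef class $L=\sum a_i\mathcal{L}_i$ to every $E_k$, split it as $p_1^*A_k\otimes p_2^*B_k$, and extract $a_\ell\geq 0$ from the inductively known nef cones of the two factors (the index $\ell=k-1$ missed by $E_k$ being covered by a second value of $k$, available once $d_1\geq 3$), the paper instead exhibits for each $i$ an effective curve $C_i$ with $C_i\cdot\mathcal{L}_j=0$ exactly when $j\neq i$; these dual curves cut the nef cone down to the simplicial cone on the $\mathcal{L}_i$. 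The paper gets away with only $E_1$ plus a separate construction of $C_0$ on $E_2$ (or directly when $d_1=2$) --- its counterpart of your missing-index phenomenon --- and produces explicit extremal curves as a by-product; your version needs all $d_1-1$ exceptional divisors but avoids constructing curves. Two immaterial points: the equivalence ``$p_1^*A\otimes p_2^*B$ nef iff $A$ and $B$ nef'' follows by restricting to fibers of the two projections, so no K\"unneth splitting of $\Pic$ is needed; and the determinant of your change-of-basis matrix is $\pm1$ rather than $\pm(d_2-d_1)$ (the relevant minor is unitriangular up to sign and the first row is $(1,0,\dots,0)$), while the $d_1=2$ base case for $N=1$ should be credited to Proposition \ref{cont1} rather than to \cite{Oolqp} alone, which assumes $N\geq2$. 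Neither affects the argument.
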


\begin{proof}
First, the case $d_1=1$ being trivial, let us suppose $d_1\geq 2$.

Since $\hat{H}_{d_1,d_2}$ has been constructed from $\bar{H}_{d_1,d_2}$ by blowing-up $d_1-2$
times an irreducible smooth subvariety of codimension $\geq2$ (see Remark \ref{blowuptriv}),
its Picard group has rank $d_1$ and is generated by $\mathcal{O}(1,0)$, $\mathcal{O}(0,1)$ and
the $(E_k)_{1\leq k\leq d_1-2}$. It follows that the
line bundles $(\mathcal{L}_i)_{-1\leq i\leq d_1-2}$ form a basis of the Picard group of
$\hat{H}_{d_1,d_2}$. Since they are all semi-ample, the cone they generate
is included in the nef cone. To prove the reverse inclusion, it suffices to construct effective
curves $(C_i)_{-1\leq i\leq d_1-2}$ in $\hat{H}_{d_1,d_2}$ such that $C_i\cdot\mathcal{L}_j$
is zero if and only if $i\neq j$. Indeed those curves will induce inequalities satisfied by the nef cone showing it is
included in the span of the $(\mathcal{L}_i)_{-1\leq i\leq d_1-2}$. 

Let us construct these curves by induction on $d_1$.
Since $E_1\simeq \hat{H}_{d_1-1,d_2+1}\times \hat{H}_{1,d_2-d_1+1}$ by Proposition \ref{blowup} (ii),
using the calculations of $\mathcal{L}_i|_{E_1}$ done in Lemma \ref{restrli}, and
applying the induction hypothesis to $\hat{H}_{d_1-1,d_2+1}$,
it is possible to construct all the required curves except for $C_0$ as curves lying on $E_1$.
If $d_1=2$, choose for $C_0$ any curve contracted by the natural
map $\hat{H}_{2,d_2}\to\bar{H}_2$. If $d_1>2$, choose for $C_0\subset E_2$ any curve contracted by the
natural map $E_2\simeq\hat{H}_{d_1-2,d_2+2}\times \hat{H}_{2,d_2-d_1+2}\to\hat{H}_{d_1-2,d_2+2}\times\bar{H}_2$.
\end{proof}

For $-1\leq i\leq d_1-2$, let $\hat{H}_{d_1,d_2}\to\tilde{H}^{[i]}_{d_1,d_2}$ be the contraction induced by $\mathcal{L}_i$.
For $0\leq i\leq d_1-2$, let $\hat{H}_{d_1,d_2}\to\bar{H}^{[i]}_{d_1,d_2}$ be the contraction induced by
the face of $\Nef(\hat{H}_{d_1,d_2})$ spanned by $\mathcal{L}_{i-1}$ and $\mathcal{L}_i$.
For $0\leq i\leq d_1-2$, we have natural contractions $c_i:\bar{H}^{[i]}_{d_1,d_2}\to\tilde{H}^{[i]}_{d_1,d_2}$
and $f_i:\bar{H}^{[i]}_{d_1,d_2}\to\tilde{H}^{[i-1]}_{d_1,d_2}$. 

As a particular case, $\bar{H}^{[0]}_{d_1,d_2}=\bar{H}_{d_1,d_2}$, $\tilde{H}^{[-1]}_{d_1,d_2}=\bar{H}_{d_1}$
and $f_0$ is the natural projection. 

\begin{lem}\label{stratesgen}
If $0\leq i\leq d_1-2$, the open set $\hat{H}_{d_1,d_2}\setminus\cup_{k=1}^{i+1}E_{k}$
is saturated under the contraction $\hat{H}_{d_1,d_2}\to\tilde{H}^{[i]}_{d_1,d_2}$, and its image in 
$\tilde{H}^{[i]}_{d_1,d_2}$ is an open set isomorphic to $\bar{H}_{d_1,d_2}\setminus W_{i+1}$.
\end{lem}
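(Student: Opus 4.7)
The plan is to argue by induction on $d_1$, the case $d_1=1$ being vacuous since $0\leq i\leq d_1-2$ is empty. Fix $d_1\geq 2$, $0\leq i\leq d_1-2$, and denote by $\pi\colon\hat{H}_{d_1,d_2}\to\tilde{H}^{[i]}_{d_1,d_2}$ the contraction by $\mathcal{L}_i$, which I may assume has connected fibers after Stein factorization.

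The first task is saturation $\pi^{-1}(\pi(U))=U$, which I establish by analyzing the fiber of $\pi$ through each $x\in U$. If $x\notin\bigcup_k E_k$, then any irreducible contracted curve through $x$ descends via $\beta_1^{d_1-1}$ to a curve $C'\subset\bar{H}_{d_1,d_2}\setminus\Delta$ satisfying $C'\cdot\mathcal{O}(1,0)=C'\cdot\mathcal{O}(0,1)=0$ (since both are nef and the weighted sum is $C\cdot\mathcal{L}_i=0$); relative ampleness of $\mathcal{O}(0,1)$ over $\bar{H}_{d_1}$ forces $C'$ to be a point, a contradiction, so the fiber is $\{x\}$. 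If instead $x\in U\cap E_k$, then $x\in U$ forces $k\geq i+2$; writing $x=(a,y)$ under the identification $E_k\simeq\hat{H}_{d_1-k,d_2+k}\times\hat{H}_{k,d_2-d_1+k}$ of Proposition~\ref{blowup}(ii), the relation $E_j|_{E_k}=p_2^*E_j$ for $j<k$ (Proposition~\ref{blowup}(iii)) yields $y\notin\bigcup_{j=1}^{i+1}E_j$ in $\hat{H}_{k,d_2-d_1+k}$. By Lemma~\ref{restrli}(iii), $\pi|_{E_k}$ factors as $(q_1,q_2)\colon E_k\to\bar{H}_{d_1-k}\times\tilde{H}^{[i]}_{k,d_2-d_1+k}$, where $q_2$ is the contraction by $\mathcal{L}_i$ on $\hat{H}_{k,d_2-d_1+k}$. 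The inductive hypothesis gives $q_2^{-1}(q_2(y))=\{y\}$, so the fiber of $\pi|_{E_k}$ at $\pi(x)$ is $q_1^{-1}(q_1(a))\times\{y\}$, which is still contained in $U$ by the same reasoning. Since the images $\pi(E_k)$ form disjoint strata of $\tilde{H}^{[i]}_{d_1,d_2}$ indexed by $k$ (as can be read off the factoring $(q_1,q_2)$ above), this is in fact the full fiber of $\pi$ in $\hat{H}_{d_1,d_2}$, establishing saturation.

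Openness of $\pi(U)$ then follows from properness of $\pi$, since $\pi(\bigcup_{k\leq i+1}E_k)$ is closed and equal to the complement of $\pi(U)$. To identify $\pi(U)$ with $\bar{H}_{d_1,d_2}\setminus W_{i+1}$, I consider the rational map $\psi\colon\bar{H}_{d_1,d_2}\dashrightarrow\tilde{H}^{[i]}_{d_1,d_2}$ defined by $\Lambda_i$. By Proposition~\ref{baselocus}, its indeterminacy locus is $W_i\subset W_{i+1}$, so $\psi$ restricts to a morphism on $V:=\bar{H}_{d_1,d_2}\setminus W_{i+1}$. It agrees with $\pi\circ(\beta_1^{d_1-1})^{-1}$ on $\bar{H}_{d_1,d_2}\setminus\Delta$; on each stratum $W_k\setminus W_{k-1}$ with $i+2\leq k\leq d_1-1$, Lemma~\ref{dividiv} factors $r_j(PL,PH)$ as a product involving $P$ and the smaller remainder $\bar{r}_j(L,H)$, which matches the factoring $(q_1,q_2)$ of $\pi|_{E_k}$. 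Combining the stratum-wise analyses gives $\pi(U)=\psi(V)$.

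The main remaining step, and the chief technical obstacle, is to show that $\psi|_V$ is an open immersion onto its image. Injectivity is checked stratum by stratum on $V$: on $\bar{H}_{d_1,d_2}\setminus\Delta$ it follows from the fact that the family of remainders $r_i^{(M)}$ (as the auxiliary scalars $(\lambda_u,\mu_u)$ vary) separates smooth complete intersections, while on each $W_k\setminus W_{i+1}$ with $k\geq i+2$, Lemma~\ref{dividiv} reduces separation to the inductive hypothesis applied to the smaller space $\hat{H}_{k,d_2-d_1+k}$. Immersiveness is handled analogously via Lemma~\ref{dividiv2}, which describes the order of vanishing and first-order behavior of $r_j$ in the blow-up direction at points of $W_k$. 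Once these injectivity and immersiveness statements are in place, $\psi|_V$ induces the desired isomorphism $V\simeq\psi(V)=\pi(U)$, completing the proof.
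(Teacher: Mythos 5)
There is a genuine gap, and it sits at the heart of the argument: your treatment of the case $x\notin\bigcup_k E_k$. You claim that for a contracted irreducible curve $C$ through such a point, $C'\cdot\mathcal{O}(1,0)=C'\cdot\mathcal{O}(0,1)=0$ ``since both are nef and the weighted sum is $C\cdot\mathcal{L}_i=0$''. This fails twice. First, $\mathcal{O}(0,1)$ is \emph{not} nef on $\bar{H}_{d_1,d_2}$: the nef cone is generated by $\mathcal{O}(1,0)$ and $\mathcal{O}(d_2-d_1+1,1)$, and $\mathcal{O}(0,1)=\mathcal{O}(d_2-d_1+1,1)-(d_2-d_1+1)\mathcal{O}(1,0)$ lies outside it (it is negative on the curves contracted by the first contraction $c$). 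Second, for $i\geq 1$ the bundle $\mathcal{L}_i=\mathcal{O}(d_2-d_1+1+i,1+i)(-\sum_{k\le i}(i-k+1)E_k)$ is not a nonnegative combination of $\mathcal{O}(1,0)$ and $\mathcal{O}(0,1)$: a curve through $x\notin\bigcup_kE_k$ can still meet $E_1,\dots,E_i$ positively, so $C\cdot\mathcal{L}_i=0$ is a relation of the form $A+B-D=0$ with $A,B,D\geq 0$ and forces nothing. Ruling out exactly these curves is the hard content of the lemma. Note that already for $i=0$ your claim amounts to saying that every curve contracted by $c$ lies in $\Delta$ (in fact in $W_1$) --- this is Theorem \ref{th1'}, whose proof occupies all of Section 1 and which the paper states is used ``in an essential way'' precisely here. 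Your proposal never invokes it; no two-line intersection computation can replace it. The paper's actual proof inducts on $i$ (not on $d_1$): the base case $i=0$ is Theorem \ref{th1'}, and the inductive step uses the identity $(\mathcal{L}_{i+1}-\mathcal{L}_i)(E_{i+1})\simeq\mathcal{L}_i-\mathcal{L}_{i-1}$ together with $C\cdot E_{i+1}\geq0$ and nefness of $\mathcal{L}_{i\pm1}$ to force $C\cdot\mathcal{L}_{i-1}=0$, reducing a curve contracted by $\mathcal{L}_i$ to one contracted by $\mathcal{L}_{i-1}$.

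Two secondary problems. Your assertion that ``the images $\pi(E_k)$ form disjoint strata of $\tilde{H}^{[i]}_{d_1,d_2}$'', used to confine the fiber through $x\in E_k$ to $E_k$, is circular: that stratification is Proposition \ref{strates}, which is deduced from this lemma. And the final step --- that $\psi|_V$ is an open immersion because the sections $r_i^{(M)}$ ``separate points and tangent vectors'' stratum by stratum --- is an unproved strong claim that appears nowhere in the paper; the paper instead gets the identification of $\pi(U)$ with $\bar{H}_{d_1,d_2}\setminus W_{i+1}$ for free once it knows that a curve meeting $U$ is $\pi$-contracted if and only if it is contracted by the blow-down $\hat{H}_{d_1,d_2}\to\bar{H}_{d_1,d_2}$, since $\mathcal{L}_i|_U$ is pulled back from $\bar{H}_{d_1,d_2}\setminus W_{i+1}$.
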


\begin{proof}
It suffices to show that, if $0\leq i\leq d_1-2$, a curve $C\subset\hat{H}_{d_1,d_2}$
that meets $\hat{H}_{d_1,d_2}\setminus\cup_{k=1}^{i+1}E_{k}$ is contracted by
$\hat{H}_{d_1,d_2}\to\tilde{H}^{[i]}_{d_1,d_2}$
if and only if it is contracted by $\hat{H}_{d_1,d_2}\to\bar{H}_{d_1,d_2}$.
One implication is easy: if such a curve is contracted by $\hat{H}_{d_1,d_2}\to\bar{H}_{d_1,d_2}$, since
$\mathcal{L}_i|_{\hat{H}_{d_1,d_2}\setminus\cup_{k=1}^{i+1}E_{k}}$ is the pull-back
of a line bundle on $\bar{H}_{d_1,d_2}\setminus W_{i+1}$,
it is necessarily contracted by $\hat{H}_{d_1,d_2}\to\tilde{H}^{[i]}_{d_1,d_2}$.
Let us prove the other implication by induction on $i$.
The $i=0$ case is a consequence of the description of the first contraction
$c_0=c$ in Theorem \ref{th1'}, and more precisely of the fact
that all the curves contracted by $c_0$ lie on $W_1$.
Now suppose that $i>0$, and let $C$ be a curve meeting
$\hat{H}_{d_1,d_2}\setminus\cup_{k=1}^{i+1}E_{k}$ that is contracted by
$\hat{H}_{d_1,d_2}\to\tilde{H}^{[i]}_{d_1,d_2}$. In particular, $C\cdot E_{i+1}\geq 0$ and $C\cdot \mathcal{L}_i=0$.
From the identity $(\mathcal{L}_{i+1}-\mathcal{L}_i)(E_{i+1})\simeq(\mathcal{L}_i-\mathcal{L}_{i-1})$ and the fact
that $\mathcal{L}_{i-1}$ and $\mathcal{L}_{i+1}$ are semi-ample, hence nef, we see that we necessarily have
$C\cdot \mathcal{L}_{i-1}=0$. Consequently, $C$ is contracted by 
$\hat{H}_{d_1,d_2}\to\tilde{H}^{[i-1]}_{d_1,d_2}$ and the induction hypothesis applies to show that
$C$ is contracted by $\hat{H}_{d_1,d_2}\to\bar{H}_{d_1,d_2}$.
\end{proof}

\begin{prop}\label{strates}
\hspace{1em}

\begin{enumerate}
\item[(i)] If $-1\leq i\leq d_1-2$, the scheme $\tilde{H}^{[i]}_{d_1,d_2}$
admits a stratification by $i+2$ locally closed subschemes whose
normalized strata are $(\bar{H}_{d_1-i+r,d_2+i-r}\setminus W_{r+1})_{0\leq r\leq i}$ and $\bar{H}_{d_1-i-1}$.
\item[(ii)] If $0\leq i\leq d_1-2$, the scheme $\bar{H}^{[i]}_{d_1,d_2}$
admits a stratification by $i+1$ locally closed subschemes whose
normalized strata are $(\bar{H}_{d_1-i+r,d_2+i-r}\setminus W_{r})_{0\leq r\leq i}$.
\end{enumerate}
\end{prop}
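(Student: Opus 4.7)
The proof goes by simultaneous induction on $i$, with $(d_1,d_2)$ varying. The base case of (i) at $i=-1$ is immediate: $\tilde{H}^{[-1]}_{d_1,d_2}=\bar{H}_{d_1}$ is the single stratum $\bar{H}_{d_1-i-1}=\bar{H}_{d_1}$, the range $0\le r\le -1$ being empty. The base case of (ii) at $i=0$ is $\bar{H}^{[0]}_{d_1,d_2}=\bar{H}_{d_1,d_2}$, the single stratum $\bar{H}_{d_1,d_2}\setminus W_0=\bar{H}_{d_1,d_2}$ since $W_0=\varnothing$.

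For the inductive step of (i), Lemma \ref{stratesgen} exhibits the top open stratum: the image of $\hat{H}_{d_1,d_2}\setminus\bigcup_{k=1}^{i+1}E_k$ in $\tilde{H}^{[i]}_{d_1,d_2}$ is isomorphic to $\bar{H}_{d_1,d_2}\setminus W_{i+1}$, the stratum indexed by $r=i$. The complement is covered by the images of $E_1,\dots,E_{i+1}$. For $1\le k\le i$, Lemma \ref{restrli} (ii) together with the identification $E_k\simeq \hat{H}_{d_1-k,d_2+k}\times\hat{H}_{k,d_2-d_1+k}$ of Proposition \ref{blowup} (ii) gives $\mathcal{L}_i|_{E_k}\simeq p_1^*\mathcal{L}_{i-k}$, so the restriction of the contraction factors as $E_k\xrightarrow{p_1}\hat{H}_{d_1-k,d_2+k}\to \tilde{H}^{[i-k]}_{d_1-k,d_2+k}$; the inductive hypothesis applied at the strictly smaller index $i-k$ gives the stratification of the target. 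For $k=i+1$, Lemma \ref{restrli} (i) ensures $\mathcal{L}_i$ is trivial on $\hat{H}_{i+1,d_2-d_1+i+1}$, so Lemma \ref{restrli} (iii) yields $\mathcal{L}_i|_{E_{i+1}}\simeq p_1^*\mathcal{O}(d_2-d_1+2i+2,0)$, and $E_{i+1}$ collapses via $\hat{H}_{d_1-i-1,d_2+i+1}$ and the $\mathcal{O}(1,0)$-contraction down onto $\bar{H}_{d_1-i-1}$. Assembling everything, the distinguished top stratum contributed by each $E_k$ for $1\le k\le i$ realizes the stratum indexed by $r=i-k$, while $\bar{H}_{d_1-i-1}$ appears coherently as the image of $E_{i+1}$ and as the bottom stratum of each $\tilde{H}^{[i-k]}_{d_1-k,d_2+k}$ by induction, yielding the stated $i+2$ strata.

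For the inductive step of (ii), I would first prove an analogue of Lemma \ref{stratesgen} for the intermediate model: the open set $\hat{H}_{d_1,d_2}\setminus\bigcup_{k=1}^i E_k$ is saturated under $\hat{H}_{d_1,d_2}\to \bar{H}^{[i]}_{d_1,d_2}$, with image isomorphic to $\bar{H}_{d_1,d_2}\setminus W_i$. Indeed, a curve in this open set is contracted to $\bar{H}^{[i]}$ iff it has zero intersection with both $\mathcal{L}_{i-1}$ and $\mathcal{L}_i$, and the argument in Lemma \ref{stratesgen} (applied twice) identifies such curves with those contracted to $\bar{H}_{d_1,d_2}$, which is impossible on the open complement of the exceptional divisors. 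This produces the $r=i$ stratum $\bar{H}_{d_1,d_2}\setminus W_i$. The remaining strata come from the $E_k$ for $1\le k\le i$: Lemma \ref{restrli} (ii) shows that the face of $\Nef(\hat{H}_{d_1,d_2})$ spanned by $\mathcal{L}_{i-1}|_{E_k}$ and $\mathcal{L}_i|_{E_k}$ corresponds, after $p_1$, to the face of $\Nef(\hat{H}_{d_1-k,d_2+k})$ spanned by $\mathcal{L}_{i-k-1}$ and $\mathcal{L}_{i-k}$. Hence the image of $E_k$ in $\bar{H}^{[i]}_{d_1,d_2}$ is $\bar{H}^{[i-k]}_{d_1-k,d_2+k}$, whose stratification by induction supplies the strata $(\bar{H}_{d_1-i+r,d_2+i-r}\setminus W_r)_{0\le r\le i-k}$; gluing the pieces yields the required $i+1$ strata.

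The main obstacle is the saturation statement for $\bar{H}^{[i]}_{d_1,d_2}$ that replaces Lemma \ref{stratesgen}, together with the bookkeeping needed to check that the stratifications coming from different $E_k$'s agree on their overlaps. The latter reduces, via the compatibility diagrams of Proposition \ref{blowuprec} (iv), to verifying that the image of $W_{k'}\subset\bar{H}_{d_1-k,d_2+k}$ in $\tilde{H}^{[i-k]}_{d_1-k,d_2+k}$ matches the corresponding stratum of $\tilde{H}^{[i]}_{d_1,d_2}$, which follows inductively from the explicit description of the multiplication maps $\varphi_k$. No individual step is hard, but the simultaneous induction on $i$ and on $d_1$ demands care with indices.
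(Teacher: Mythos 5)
Your proposal is correct and uses essentially the same ingredients as the paper's proof: Lemma \ref{stratesgen} for the open stratum, and the restriction formulas of Proposition \ref{blowup} and Lemma \ref{restrli} to handle the exceptional divisors. The only organizational difference is that the paper avoids your overlap bookkeeping and the induction on the full stratification of the image of each $E_k$: it instead applies Lemma \ref{stratesgen} directly to the disjoint saturated pieces $E_k\setminus\bigcup_{j=k+1}^{i+1}(E_j|_{E_k})$ (resp. to $E_{i+1}$), each of which contributes exactly one locally closed stratum.
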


\begin{proof}
Let us prove (i), the proof of (ii) being analogous.
If $i=-1$, we know that $\tilde{H}^{[-1]}_{d_1,d_2}\simeq\bar{H}_{d_1}$, so that we may suppose that $i\geq 0$.

By Lemma \ref{stratesgen}, the open set $\hat{H}_{d_1,d_2}\setminus\cup_{k=1}^{i+1}E_{k}$
is saturated under the contraction $\hat{H}_{d_1,d_2}\to\tilde{H}^{[i]}_{d_1,d_2}$, and its image in 
$\tilde{H}^{[i]}_{d_1,d_2}$ is an open set isomorphic to $\bar{H}_{d_1,d_2}\setminus W_{i+1}$.

Using the descriptions of $E_j|_{E_k}$ and of $\mathcal{L}_i|{E_k}$
from Proposition \ref{blowup} and Lemma \ref{restrli}, we see
successively for $1\leq k\leq i$ (applying Lemma \ref{stratesgen} to
$\hat{H}_{d_1-k,d_2+k}\to\tilde{H}^{[i-k]}_{d_1-k,d_2+k}$) 
that $E_k\setminus \cup_{j=k+1}^{i+1}(E_j|_{E_k})$ is saturated under
the contraction $\hat{H}_{d_1,d_2}\to\tilde{H}^{[i]}_{d_1,d_2}$, and that its image in 
$\tilde{H}^{[i]}_{d_1,d_2}$ is a locally closed subscheme isomorphic (up to normalization) to
$\bar{H}_{d_1-k,d_2+k}\setminus W_{i-k+1}$.

As $E_{i+1}$ is the complement of all the saturated subsets already described, it is also saturated.
The description of $\mathcal{L}_i|{E_{i+1}}$
in Lemma \ref{restrli} shows that its image in 
$\tilde{H}^{[i]}_{d_1,d_2}$ is a closed subscheme isomorphic (up to normalization) to $\bar{H}_{d_1-i-1}$.
\end{proof}

\pagebreak[1]

\begin{prop}\label{Qfact}
\hspace{1em}

\begin{enumerate}
\item[(i)] If $-1\leq i\leq d_1-2$, $\tilde{H}^{[i]}_{d_1,d_2}$ has Picard rank $1$, and if
$0\leq i\leq d_1-2$, $\bar{H}^{[i]}_{d_1,d_2}$ has Picard rank $2$.
\item[(ii)] If $0\leq i\leq d_1-2$, the contractions $\hat{H}_{d_1,d_2}\to\tilde{H}^{[i]}_{d_1,d_2}$
and $\hat{H}_{d_1,d_2}\to\bar{H}^{[i]}_{d_1,d_2}$ are birational.
\item[(iii)] 
If $0\leq i\leq d_1-2$, $\bar{H}^{[i]}_{d_1,d_2}$ is isomorphic to $\bar{H}_{d_1,d_2}$ in
codimension $1$, and $\mathbb{Q}$-factorial. Its nef cone is generated by the
semi-ample line bundles $\mathcal{L}_{i-1}$ and $\mathcal{L}_i$. 
\item[(iv)] If $0\leq i\leq d_1-3$, $c_i$ is a small contraction, and $f_{i+1}$ is also a small contraction: its flip.
\item[(v)] The contraction $c_{d_1-2}$ is a divisorial contraction contracting the discriminant.
\end{enumerate}
\end{prop}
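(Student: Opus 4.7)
The plan is to derive each part from the explicit results of the previous subsections: the simplicial nef cone of $\hat{H}_{d_1,d_2}$ (Proposition~\ref{nef}), the stratifications of $\tilde{H}^{[i]}_{d_1,d_2}$ and $\bar{H}^{[i]}_{d_1,d_2}$ (Proposition~\ref{strates}), the saturation statement (Lemma~\ref{stratesgen}), and the restriction formulas for the $\mathcal{L}_i$'s (Lemma~\ref{restrli}). For (i), since a projective contraction with connected fibers from a smooth projective variety identifies $\Pic(Y)\otimes\mathbb{Q}$ with the subspace of $\Pic(X)\otimes\mathbb{Q}$ orthogonal to the contracted curves, contracting a $1$-dimensional face of the simplicial nef cone of $\hat{H}_{d_1,d_2}$ yields a target of Picard rank $1$, and a $2$-dimensional face yields Picard rank $2$. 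For (ii), Lemma~\ref{stratesgen} exhibits a dense open of $\hat{H}_{d_1,d_2}$ mapping isomorphically onto a dense open of $\tilde{H}^{[i]}_{d_1,d_2}$, so that contraction is birational; factoring it through $\bar{H}^{[i]}_{d_1,d_2}$ then shows both contractions are birational.

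For (iii), the nef cone of $\bar{H}^{[i]}_{d_1,d_2}$ is generated by the descents of $\mathcal{L}_{i-1}$ and $\mathcal{L}_i$: both are semi-ample on $\hat{H}_{d_1,d_2}$ and trivial on the contracted curves, hence descend as semi-ample classes, while any nef class on $\bar{H}^{[i]}_{d_1,d_2}$ pulls back to a nef class on $\hat{H}_{d_1,d_2}$ orthogonal to the contracted curves, which must lie in the $2$-dimensional face spanned by $\mathcal{L}_{i-1}$ and $\mathcal{L}_i$. The codimension~$1$ isomorphism with $\bar{H}_{d_1,d_2}$ follows from Proposition~\ref{strates}(ii): the open stratum $\bar{H}_{d_1,d_2}\setminus W_i$ is a common dense open, with complements of codimension $\geq 2$ on each side (the non-open strata $\bar{H}_{d_1-i+r,d_2+i-r}\setminus W_r$ of $\bar{H}^{[i]}_{d_1,d_2}$ with $r<i$ have codimension $2(i-r)\geq 2$, while $W_i\subset\bar{H}_{d_1,d_2}$ has codimension $d_1-i\geq 2$). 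For the $\mathbb{Q}$-factoriality of $\bar{H}^{[i]}_{d_1,d_2}$, the plan is to induct on $i$, starting from the smooth base case $\bar{H}^{[0]}_{d_1,d_2}=\bar{H}_{d_1,d_2}$, and using that, by (iv), the transition $\bar{H}^{[i]}_{d_1,d_2}\dashrightarrow\bar{H}^{[i+1]}_{d_1,d_2}$ is a flip, and flips preserve $\mathbb{Q}$-factoriality.

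For (iv) and (v), the stratifications yield the exceptional loci of the $c_i$'s. In the range $0\leq i\leq d_1-3$, each stratum $\bar{H}_{d_1-i+r,d_2+i-r}\setminus W_r$ of $\bar{H}^{[i]}_{d_1,d_2}$ maps to the stratum $\bar{H}_{d_1-i+r,d_2+i-r}\setminus W_{r+1}$ of $\tilde{H}^{[i]}_{d_1,d_2}$, and since $W_{i+1}$ has codimension $d_1-i-1\geq 2$ in $\bar{H}_{d_1,d_2}$, the exceptional locus of $c_i$ has codimension $\geq 2$, showing $c_i$ is small. The symmetric argument for $f_{i+1}$ yields that it too is small, and together with the opposite relative ampleness of $\mathcal{L}_{i-1}$ (which is $c_i$-positive on $\bar{H}^{[i]}_{d_1,d_2}$) versus its strict transform (which is $f_{i+1}$-negative on $\bar{H}^{[i+1]}_{d_1,d_2}$) this completes the flip diagram. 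For (v), when $i=d_1-2$ the locus $W_{d_1-1}=\Delta$ is itself a divisor, so the exceptional locus of $c_{d_1-2}$ contains $\Delta$ (contracted to the closed stratum $\bar{H}_1$ of $\tilde{H}^{[d_1-2]}_{d_1,d_2}$), making $c_{d_1-2}$ a divisorial contraction of $\Delta$. The main obstacle is organizing the logical dependencies correctly, since (iii)(2) depends on (iv), which must therefore be established without invoking $\mathbb{Q}$-factoriality of the target; verifying the opposite-ampleness condition for the flip without circularity is the delicate technical point.
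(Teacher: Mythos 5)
Most of your proposal runs parallel to the paper's proof: (i) from the dimension of the contracted face of the simplicial nef cone, (ii) and the codimension-$1$ statement in (iii) from Proposition~\ref{strates} (your explicit codimension counts $2(i-r)$ for the deeper strata and $d_1-i$ for $W_i$ are correct), the nef cone of $\bar{H}^{[i]}_{d_1,d_2}$ from the two semi-ample classes $\mathcal{L}_{i-1},\mathcal{L}_i$ together with Picard rank $2$, smallness of $c_i$ and $f_{i+1}$ from the stratifications, and (v) from the fact that $\Delta$ survives in $\bar{H}^{[d_1-2]}_{d_1,d_2}$ but dies in $\tilde{H}^{[d_1-2]}_{d_1,d_2}$. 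These steps are fine.

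The genuine gap is your treatment of $\mathbb{Q}$-factoriality. You propose to induct on $i$ using ``flips preserve $\mathbb{Q}$-factoriality,'' but that transfer theorem requires the full flip package --- in particular a divisor that is anti-ample for $c_i$ on $\bar{H}^{[i]}_{d_1,d_2}$ and whose strict transform is ample for $f_{i+1}$ on $\bar{H}^{[i+1]}_{d_1,d_2}$ --- and you explicitly leave this verification open (``the delicate technical point''), so the induction never closes. The circularity you worry about is real on your route but entirely avoidable: the paper deduces $\mathbb{Q}$-factoriality in one line from two facts you have already proved, namely that $\bar{H}^{[i]}_{d_1,d_2}$ is isomorphic in codimension $1$ to the smooth variety $\bar{H}_{d_1,d_2}$ and that both have Picard rank $2$. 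Indeed $\Cl(\bar{H}^{[i]}_{d_1,d_2})\otimes\mathbb{Q}\simeq\Cl(\bar{H}_{d_1,d_2})\otimes\mathbb{Q}=\Pic(\bar{H}_{d_1,d_2})\otimes\mathbb{Q}$ has rank $2$, and the rank-$2$ subspace $\Pic(\bar{H}^{[i]}_{d_1,d_2})\otimes\mathbb{Q}$ must therefore be all of it. With $\mathbb{Q}$-factoriality in hand, (iv) is then immediate as in the paper: the two models are isomorphic in codimension $1$ but not isomorphic (different nef chambers), so neither contraction to $\tilde{H}^{[i]}_{d_1,d_2}$ can be divisorial, and the diagram is the flip. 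If you do want the explicit opposite-ampleness, it follows from the relation $\mathcal{L}_{i-1}=2\mathcal{L}_i-\mathcal{L}_{i+1}-E_{i+1}$ used in the proof of Lemma~\ref{stratesgen}, since $E_{i+1}$ pushes forward to zero on the small models; but the paper does not need it.
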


\begin{proof}
Part (i) is a consequence of the dimensions of the faces of $\Nef(\hat{H}_{d_1,d_2})$ used to construct
$\tilde{H}^{[i]}_{d_1,d_2}$ and $\bar{H}^{[i]}_{d_1,d_2}$. Part
(ii) and the first assertion of Part (iii) are corollaries of Proposition \ref{strates}. 
Since $\bar{H}_{d_1,d_2}\dashrightarrow\bar{H}^{[i]}_{d_1,d_2}$ is an isomorphism in codimension $1$ between two varieties
of Picard rank $2$, and since $\bar{H}_{d_1,d_2}$ is $\mathbb{Q}$-factorial (because
it is smooth), it follows that $\bar{H}^{[i]}_{d_1,d_2}$ is $\mathbb{Q}$-factorial.
Moreover, the semi-ample line bundles
$\mathcal{L}_{i-1}$ and $\mathcal{L}_i$ induce the contractions $f_i$ and $c_i$, hence are
on the boundary of the nef cone of $\bar{H}^{[i]}_{d_1,d_2}$.
Since $\bar{H}^{[i]}_{d_1,d_2}$ is of Picard rank $2$,
they generate its nef cone.

Part (iv) is an immediate consequence of the fact proven in (ii) that $\bar{H}^{[i]}_{d_1,d_2}$ and $\bar{H}^{[i+1]}_{d_1,d_2}$
are isomorphic in codimension $1$.
By Proposition \ref{strates}, the discriminant is not contracted in $\bar{H}^{[d_1-2]}_{d_1,d_2}$,
but is contracted in $\tilde{H}^{[d_1-2]}_{d_1,d_2}$, proving (v).
\end{proof}

It is now possible to prove Theorem \ref{th3}.

\begin{thm}[Theorem \ref{th3}]\label{th3'}
\hspace{1em}

\begin{enumerate}
\item[(i)] The variety $\bar{H}_{d_1,d_2}$ is a
Mori dream space and its effective cone is generated by $\mathcal{O}(1,0)$ and $\Delta$. 
\item[(ii)] The MMP for $\bar{H}_{d_1,d_2}$ flips the
$W_i$ for $1\leq i\leq d_1-2$ and eventually contracts $W_{d_1-1}=\Delta$.
\item[(iii)] The last model of the MMP for $\bar{H}_{d_1,d_2}$
is a compactification of $H_{d_1,d_2}$ with codimension $2$ boundary, that admits a stratification
whose normalized strata are $(H_{d_1-i,d_2+i})_{0\leq i\leq d_1-1}$.
\end{enumerate}
\end{thm}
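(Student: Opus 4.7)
The plan is to assemble the content of Propositions \ref{nef}, \ref{strates} and \ref{Qfact}, which together describe the whole MMP. They provide, for each $0 \leq i \leq d_1-2$, a birational model $\bar{H}^{[i]}_{d_1,d_2}$ of $\bar{H}_{d_1,d_2}$ together with contractions $c_i$ and $f_i$ fitting into the chain
\[
\bar{H}_{d_1,d_2}=\bar{H}^{[0]}_{d_1,d_2}\xrightarrow{c_0}\tilde{H}^{[0]}_{d_1,d_2}\xleftarrow{f_1}\bar{H}^{[1]}_{d_1,d_2}\xrightarrow{c_1}\cdots\xleftarrow{f_{d_1-2}}\bar{H}^{[d_1-2]}_{d_1,d_2}\xrightarrow{c_{d_1-2}}\tilde{H}^{[d_1-2]}_{d_1,d_2},
\]
in which $c_i$ is small with flip $f_{i+1}$ for $0\leq i\leq d_1-3$, and $c_{d_1-2}$ is a divisorial contraction of $\Delta$, by Proposition \ref{Qfact}(iv)--(v).

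For (ii), it remains to identify the flipped locus at step $i$ with $W_{i+1}$. This follows by comparing the stratifications of $\bar{H}^{[i]}_{d_1,d_2}$ and $\bar{H}^{[i+1]}_{d_1,d_2}$ from Proposition \ref{strates}(ii): the former has deepest stratum $\bar{H}_{d_1-i,d_2+i}$ (corresponding, on $\bar{H}_{d_1,d_2}$, to $W_{i+1}$), and the latter replaces it by its own new deepest stratum $\bar{H}_{d_1-i-1,d_2+i+1}$. The final divisorial contraction $c_{d_1-2}$ then collapses $W_{d_1-1}=\Delta$ by Proposition \ref{Qfact}(v). For (i), Proposition \ref{nef} exhibits a simplicial decomposition of the moving cone of $\bar{H}_{d_1,d_2}$ into the nef chambers $\langle\mathcal{L}_{i-1},\mathcal{L}_i\rangle$ of the SQMs $\bar{H}^{[i]}_{d_1,d_2}$ (Proposition \ref{Qfact}(iii)); this is the Mori chamber decomposition, so $\bar{H}_{d_1,d_2}$ is a Mori dream space by \cite{MDS} Proposition 1.11. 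The two extremal rays of $\Eff(\bar{H}_{d_1,d_2})$ are $\mathcal{O}(1,0)$ (inducing the projection to $\bar{H}_{d_1}$) and $\Delta$ (the exceptional divisor of the divisorial contraction $c_{d_1-2}$); since $\bar{H}_{d_1,d_2}$ has Picard rank $2$ and both classes are effective, this pins down $\Eff$.

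For (iii), apply Proposition \ref{strates}(i) with $i=d_1-2$: the last model $\tilde{H}^{[d_1-2]}_{d_1,d_2}$ is stratified by $d_1$ normalized strata, namely $\bar{H}_{2+r,d_2+d_1-2-r}\setminus W_{r+1}$ for $0\leq r\leq d_1-2$ together with $\bar{H}_1$. Reindexing by $j:=d_1-2-r$ and recalling that in $\bar{H}_{d_1-j,d_2+j}$ the discriminant is precisely $W_{d_1-1-j}$, the first family of strata becomes $H_{d_1-j,d_2+j}$ for $0\leq j\leq d_1-2$; the last stratum $\bar{H}_1\simeq\mathbb{P}^1$ is identified with $H_{1,d_2+d_1-1}$ under Convention \ref{conv11} (where the discriminant is empty). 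Finally, the codimension of $H_{d_1-j,d_2+j}$ in $\tilde{H}^{[d_1-2]}_{d_1,d_2}$ equals $(2d_1-1)-(2(d_1-j)-1)=2j$, so the open stratum $H_{d_1,d_2}$ has boundary of codimension $\geq 2$.

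The hard work has already been absorbed into the preceding propositions, so the main delicate point in this assembly is the identification of the flipped loci: one must check that the "new" deepest stratum appearing in $\bar{H}^{[i+1]}_{d_1,d_2}$ arises from $W_{i+1}\subset\bar{H}_{d_1,d_2}$ (via the SQM) and not from some larger or smaller subvariety. This can be extracted from how the blow-ups $\beta_k$ are stacked on the $W_k$ and from the explicit contractions induced by the $\mathcal{L}_i$: concretely, $c_i$ collapses the image in $\bar{H}^{[i]}_{d_1,d_2}$ of $E_{i+1}\setminus\bigcup_{j<i+1} E_j$, whose preimage in $\bar{H}_{d_1,d_2}$ is the locally closed piece of $W_{i+1}$ not contained in any smaller $W_h$.
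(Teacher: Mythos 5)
Your proposal is correct and follows essentially the same route as the paper: both assemble Propositions \ref{nef}, \ref{strates} and \ref{Qfact}, deduce the Mori dream space property from the fact that the nef cones of the small $\mathbb{Q}$-factorial modifications $\bar{H}^{[i]}_{d_1,d_2}$ are spanned by semi-ample line bundles and cover the movable cone (bounded on one side by the fibration to $\bar{H}_{d_1}$ and on the other by the divisorial contraction $c_{d_1-2}$ of $\Delta$), and read off (ii) and (iii) from the stratifications of Proposition \ref{strates}. The one slip is the citation: what you actually verify is Definition 1.10 of \cite{MDS} (the definition of a Mori dream space via a Mori chamber decomposition with semi-ample nef cones), not Proposition 1.11, which goes in the opposite direction (deriving the MMP from the Mori dream space property).
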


\begin{proof}
By Proposition \ref{Qfact} (iii), the $\bar{H}^{[i]}_{d_1,d_2}$
are small $\mathbb{Q}$-factorial modifications of $\bar{H}_{d_1,d_2}$.
The variety $\bar{H}_{d_1,d_2}$ is the total space of the fibration $\bar{H}_{d_1,d_2}\to\bar{H}_{d_1}$ and,
by Proposition \ref{Qfact} (iv) and (v), after performing a sequence of flips
leading to $(\bar{H}^{[i]}_{d_1,d_2})_{0\leq i\leq d_1-2}$, the total space of the divisorial contraction $c_{d_1-2}$.
This implies that the nef cones of the
$(\bar{H}^{[i]}_{d_1,d_2})_{0\leq i\leq d_1-2}$
cover the movable cone of $\bar{H}_{d_1,d_2}$. Moreover, for $0\leq i\leq d_1-2$,
the nef cone of $\bar{H}^{[i]}_{d_1,d_2}$ is generated by semi-ample line bundle
by Proposition \ref{Qfact} (iii). We have checked the hypotheses of Definition 1.10 of \cite{MDS}, proving that 
$\bar{H}_{d_1,d_2}$ is a Mori dream space.

Moreover, the existence of the fibration $\bar{H}_{d_1,d_2}\to\bar{H}_{d_1}$
(resp. of the divisorial contraction $c_{d_1-2}$) show that $\mathcal{O}(0,1)$ (resp. $\Delta$) are on the boundary
of the effective cone of $\bar{H}_{d_1,d_2}$. Since $\bar{H}_{d_1,d_2}$ has Picard rank $2$, they generate it, proving (i).

The description in (ii) of the flipped loci follow from the explicit description of $\bar{H}^{[i]}_{d_1,d_2})$ in Proposition
\ref{strates} (ii). The last model of the MMP for $\bar{H}_{d_1,d_2}$ is $\tilde{H}^{[d_1-2]}_{d_1,d_2}$ and is described
in Proposition \ref{strates} (i).
\end{proof}

\begin{rem}
From the explicit descriptions of the $\bar{H}^{[i]}_{d_1,d_2}$ in Proposition \ref{strates} (ii),
it is possible to understand (up to normalization) what happens to $W_k$ during the
MMP for $\bar{H}_{d_1,d_2}$. At the beginning, $W_k$ is isomorphic (up to normalization) to
$\bar{H}_{d_1-k}\times\bar{H}_{k,d_2-d_1+k}$. During the first $k-1$ flips, it follows the MMP for $\bar{H}_{k,d_2-d_1+k}$:
in particular, after the $(k-1)^{\thh}$, it becomes isomorphic (up to normalization) to
$\bar{H}_{d_1-k}\times\tilde{H}^{[k-1]}_{k,d_2-d_1+k}$ During the $k^{\thh}$ flip, it is contracted via
$\bar{H}_{d_1-k}\times\tilde{H}^{[k-1]}_{k,d_2-d_1+k}\to\bar{H}_{d_1-k}$, and then flipped via
$\bar{H}_{d_1-k,d_2+k}\to\bar{H}_{d_1-k}$. Then, during the last $d_1-k-1$ steps, it follows
the MMP for $\bar{H}_{d_1-k,d_2+k}$. In particular, in the last model, it becomes isomorphic (up to normalization) to
$\tilde{H}^{[d_1-k-2]}_{d_1-k,d_2+k}$.
\end{rem}

\subsection{Complete families}

As a consequence of Proposition \ref{strates}, we construct complete curves in $H_{d_1,d_2}$.

\begin{prop}[Proposition \ref{thcc}]\label{thccbis}
The variety $H_{d_1,d_2}$ contains complete curves.
\end{prop}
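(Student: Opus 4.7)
The plan is to deduce this directly from Theorem~\ref{th3'}(iii). Assume first $d_1\geq 2$. The last model $\tilde{H}^{[d_1-2]}_{d_1,d_2}$ of the MMP is a projective variety: it has Picard rank $1$ by Proposition~\ref{Qfact}(i), and a sufficiently large power of the semi-ample line bundle $\mathcal{L}_{d_1-2}$ descends from $\hat{H}_{d_1,d_2}$ to an ample line bundle on it. By Theorem~\ref{th3'}(iii) it contains $H_{d_1,d_2}$ as an open subset whose complement $B$ has codimension $\geq 2$.

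First I would fix a projective embedding of $\tilde{H}^{[d_1-2]}_{d_1,d_2}$ provided by a sufficiently large multiple of $\mathcal{L}_{d_1-2}$. Then I would cut $\tilde{H}^{[d_1-2]}_{d_1,d_2}$ by $\dim\tilde{H}^{[d_1-2]}_{d_1,d_2}-1$ general hyperplanes to obtain a complete curve $C$. Since $B$ has codimension $\geq 2$, a standard general-position argument shows that, for a generic choice of hyperplanes, their intersection misses $B$: indeed, a general hyperplane section reduces the dimension of any closed subset by exactly $1$ (unless the subset is already empty), so after $\dim\tilde{H}^{[d_1-2]}_{d_1,d_2}-1$ cuts the intersection with $B$ becomes empty while the ambient variety retains dimension $1$. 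Thus $C\subset H_{d_1,d_2}$ is a complete curve, as wanted.

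The case $d_1=1$ is degenerate but trivial: by Convention~\ref{conv11}, $H_{1,d_2}=\bar{H}_{1,d_2}\simeq\mathbb{P}^1$ is itself complete.

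The substantive work is entirely in Theorem~\ref{th3'}, which supplies the crucial codimension~$\geq 2$ boundary; the argument above is then a routine Bertini-type dimension count, so there is no genuine obstacle at this stage.
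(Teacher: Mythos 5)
Your proof is correct and follows essentially the same route as the paper: the paper likewise takes general hyperplane sections of the last model $\tilde{H}^{[d_1-2]}_{d_1,d_2}$, using its codimension $\geq 2$ boundary (Proposition \ref{strates}) to produce a complete curve inside $H_{d_1,d_2}$. Your additional remarks on the ample descent of $\mathcal{L}_{d_1-2}$ and on the degenerate case $d_1=1$ are accurate but not needed beyond what the paper records.
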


\begin{proof}
By Proposition \ref{strates}, $\tilde{H}^{[d_1-2]}_{d_1,d_2}$ is a projective
compactification of $H_{d_1,d_2}$ with codimension $2$ boundary.
Taking general hyperplane sections, we construct a complete curve in
$\bar{H}^{[d_1-2]}_{d_1,d_2}$ that avoids the boundary, that is a complete curve
in $H_{d_1,d_2}$.
\end{proof}

\begin{rem}\label{explicurve}
In general, I do not know how to construct such curves by hand,
without using the compactification $\tilde{H}^{[d_1-2]}_{d_1,d_2}$. However, there are
particular cases for which it is possible.

When $d_2=d_1+1$, an explicit complete curve in $H_{d_1,d_1+1}$ is induced by:
$$t\mapsto[X_0^{d_1}+tX_0^{d_1-1}X_1+\dots+t^{d_1}X_1^{d_1},
X_0X_1(X_0^{d_1-1}+tX_0^{d_1-2}X_1+\dots+t^{d_1-1}X_1^{d_1-1})].$$

In characteristic $p$, it is possible to use $p^{\thh}$-powers.
For instance, there is a well-defined map $\psi_p:\bar{H}_{1,d_2}\to\bar{H}_{p,pd_2}$ given
by $\psi_p([F,G])=[F^p,G^p]$. Its image is a complete curve in $H_{p,pd_2}$.
\end{rem}

\begin{rem}
One reason why it is difficult to answer Question \ref{q2} when, say, $N=3$ and $d_1\geq 2$,
is that such curves cannot be rational (as there are no non-isotrivial smooth families of curves over $\mathbb{P}^1$).
When $N=1$, I do not know if there is an analogous obstruction for some values of the degrees,
or if it is always possible to construct complete rational curves in $H_{d_1,d_2}$.
\end{rem}

\subsection{The Hilbert scheme}\label{idhilb}

In this last paragaph, we will give an interpretation of $\hat{H}_{d_1,d_2}$ as a multigraded Hilbert scheme \cite{multigraded}.
Combined with Proposition \ref{nef}, this will prove Proposition \ref{thnef}.

Consider the natural action of $\mathbb{G}_m$ on $\mathbb{A}^2$ by homotheties.
If $Z\subset\mathbb{A}^2$ is a $\mathbb{G}_m$-invariant closed subscheme, its Hilbert function
$\HF_Z(l)$ is the dimension of the subspace of $S_l=H^0(\mathbb{P}^1,\mathcal{O}(l))$
consisting of equations satisfied by $Z$. Note that our convention is different from \cite{multigraded},
that considers the dimension of the quotient: it will be more convenient for us to manipulate equations of $Z$ rather
than functions on $Z$.

\begin{lem}\label{HF}
 Let $[F,G]\in H_{d_1,d_2}$. Then:

$$
\HF_{\{F=G=0\}}(l)=\left\{
    \begin{array}{llll}
       0&\textrm{ if } \hspace{0.3em}l\leq d_1-1, \\
       l-d_1+1&\textrm{ if } \hspace{0.3em}d_1\leq l\leq d_2-1, \\
       2l-d_1-d_2+2&\textrm{ if } \hspace{0.3em}d_2\leq l\leq d_1+d_2-1, \\
       l+1&\textrm{ if } \hspace{0.3em}d_1+d_2\leq l. 
    \end{array}
\right.
$$
We will denote by $\HF_{d_1,d_2}(l)$ this function.
\end{lem}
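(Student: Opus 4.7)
The plan is to compute the Hilbert function by resolving the ideal $I = (F, G) \subset S = \mathbbm{k}[X_0, X_1]$ via the Koszul complex. Since $[F, G] \in H_{d_1, d_2}$ means $F$ and $G$ form a complete intersection, they constitute a regular sequence in $S$, so the Koszul complex
\begin{equation*}
0 \to S(-d_1-d_2) \xrightarrow{\binom{G}{-F}} S(-d_1) \oplus S(-d_2) \xrightarrow{(F,G)} I \to 0
\end{equation*}
is exact. Taking degree $l$ parts (which preserves exactness) yields
\begin{equation*}
\HF_{\{F=G=0\}}(l) = \dim_{\mathbbm{k}} I_l = \dim S_{l-d_1} + \dim S_{l-d_2} - \dim S_{l-d_1-d_2},
\end{equation*}
where by convention $\dim S_k = k+1$ if $k \geq 0$ and $0$ otherwise.

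The remaining work is purely bookkeeping: one splits into the four ranges for $l$ according to which of $l - d_1$, $l - d_2$, $l - d_1 - d_2$ are non-negative, and reads off the claimed formulas. For $l \leq d_1 - 1$ all three terms vanish; for $d_1 \leq l \leq d_2 - 1$ only the first contributes, giving $l - d_1 + 1$; for $d_2 \leq l \leq d_1 + d_2 - 1$ the first two contribute, giving $(l-d_1+1) + (l-d_2+1) = 2l - d_1 - d_2 + 2$; and for $l \geq d_1 + d_2$ all three appear, and the sum telescopes to $l+1$ (as expected, since in this range $I_l = S_l$: the subscheme $\{F=G=0\}$ is a finite punctual scheme supported at the origin in $\mathbb{A}^2$, and a large enough power of the maximal ideal is contained in $I$).

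There is no real obstacle here; the only point that needed to be verified was that $(F,G)$ is a regular sequence, which is exactly the hypothesis $[F,G] \in H_{d_1, d_2}$ (codimension $2$ complete intersection on $\mathbb{P}^1$, equivalently $\gcd(F,G) = 1$).
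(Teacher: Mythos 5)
Your proof is correct and is in substance the same as the paper's: the paper counts the multiples of $F$ and of $G$ in degree $l$ and observes that, since $\gcd(F,G)=1$, their intersection is the space of multiples of $FG$, which is exactly the inclusion--exclusion encoded by the exactness of your Koszul resolution. The only cosmetic difference is at the top of the range, where the paper notes that $\HF_{\{F=G=0\}}(d_1+d_2)=d_1+d_2+1$ forces $I_l=S_l$ for all $l\geq d_1+d_2$, while you read the same conclusion off the telescoping sum.
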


\begin{proof}
If $l\leq d_1-1$, there are obviously no non-zero equations. If $d_1\leq l\leq d_2-1$, there are only the multiples of $F$.
If $d_2\leq l\leq d_1+d_2-1$, there are the multiples of $F$ and the multiples of $G$. Since $F$ and $G$ have no common factor,
those two subspaces have trivial intersection, and they are in direct sum. If $l=d_1+d_2$, however,
the intersection of the multiples of $F$ and of the multiples of $G$ is one-dimensional, generated by $FG$.
It follows that $HF_{\{F=G=0\}}(d_1+d_2)=d_1+d_2+1$, hence that $\{F=G=0\}$ satisfies every degree $d_1+d_2$ equation.
As a consequence, $\{F=G=0\}$ satisfies every degree $l$ equation for $l\geq d_1+d_2$ .
\end{proof}

Let $\Hilb_{d_1,d_2}$ be the multigraded Hilbert scheme of $\mathbb{G}_m$-invariant subschemes 
of $\mathbb{A}^2$ with Hilbert function $\HF_{d_1,d_2}$,
as defined and constructed in \cite{multigraded} Theorem 1.1. 
In the sequel, we will always use the same notation for a subscheme of $\mathbb{A}^2$ and a point it induces on a Hilbert scheme.

\begin{prop}\label{hilbhilb}
The scheme $\Hilb_{d_1,d_2}$ is naturally a projective subscheme of the Hilbert scheme $\Hilb_{\mathbb{P}^2}$ of $\mathbb{P}^2$.
It  is a smooth compactification of $H_{d_1,d_2}$, and
there exists a compatible birational morphism
$\pi: \Hilb_{d_1,d_2}\to\bar{H}_{d_1,d_2}$.
\end{prop}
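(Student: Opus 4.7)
The plan is to verify the three assertions in order, with the smoothness claim as the main technical hurdle.

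For the embedding into $\Hilb_{\mathbb{P}^2}$: by Lemma \ref{HF}, $\HF_{d_1,d_2}(l)=l+1$ for $l\geq d_1+d_2$, so every subscheme $Z$ parametrized by $\Hilb_{d_1,d_2}$ is $0$-dimensional. Summing $(l+1)-\HF_{d_1,d_2}(l)$ gives length $d_1d_2$. Since $Z\subset\mathbb{A}^2$ is $0$-dimensional, it is automatically closed in $\mathbb{P}^2$, and the universal family over $\Hilb_{d_1,d_2}$ yields a natural morphism to $\Hilb^{d_1d_2}_{\mathbb{P}^2}$. This morphism is a closed immersion: it is a monomorphism (the ideal sheaf, and hence the Hilbert function, is recovered from the subscheme), and its image is cut out by the closed conditions of $\mathbb{G}_m$-invariance and of having the prescribed graded Hilbert function. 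Projectivity of $\Hilb_{d_1,d_2}$ follows from projectivity of $\Hilb_{\mathbb{P}^2}$.

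For the morphism $\pi$: on the universal subscheme $\mathcal{Z}\to\Hilb_{d_1,d_2}$, the degree-$d_1$ piece of the graded ideal sheaf has rank $\HF_{d_1,d_2}(d_1)=1$, hence is a line subbundle of $S_{d_1}\otimes\mathcal{O}_{\Hilb_{d_1,d_2}}$, inducing a morphism $\Hilb_{d_1,d_2}\to\bar{H}_{d_1}$. Multiplication of $F$ by $S_{d_2-d_1}$ injects into the degree-$d_2$ ideal piece (which has rank $d_2-d_1+2$) with quotient a line bundle; this gives a morphism to the projective bundle $\bar{H}_{d_1,d_2}$ compatibly, that is $\pi$. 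To see that $\pi$ is birational and that $H_{d_1,d_2}\subset\Hilb_{d_1,d_2}$, note that $[F,G]\mapsto\{F=G=0\}$ defines an inverse to $\pi$ over $H_{d_1,d_2}$: the Hilbert function is exactly $\HF_{d_1,d_2}$ by Lemma \ref{HF}, and the construction of $\pi$ recovers $[F,G]$.

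For smoothness, I would identify $\Hilb_{d_1,d_2}$ with the blow-up $\hat{H}_{d_1,d_2}$. The morphism $\pi$ factors through $\hat{H}_{d_1,d_2}$: over a generic point of the stratum $W_k\setminus W_{k+1}$, with $[F,G]=[PL,PH]$ and $\gcd(L,H)=1$, the fiber $\pi^{-1}([F,G])$ parametrizes $\mathbb{G}_m$-invariant length-$d_1d_2$ subschemes whose degree-$d_1$ equation is $PL$ and whose degree-$d_2$ equations contain $PH$, i.e., deformations of the non-reduced complete intersection along the line $\{P=0\}$ by the sub-complete-intersection $\{L=H=0\}$. Using the description in Remark \ref{identbir} of the exceptional divisors $E_k$ of $\hat{H}_{d_1,d_2}$ as birationally parametrizing pairs $(\bar{H}_{d_1-k,d_2+k},\bar{H}_{k,d_2-d_1+k})$, one constructs inductively a morphism $\tilde\pi:\hat{H}_{d_1,d_2}\to\Hilb_{d_1,d_2}$ lifting the sequence of blow-ups. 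Since $\hat{H}_{d_1,d_2}$ is projective and smooth (Proposition \ref{blowup}(i)), and $\tilde\pi$ is birational, it suffices to show $\tilde\pi$ is bijective on closed points and that $\Hilb_{d_1,d_2}$ is irreducible (or normal); Zariski's main theorem then yields the isomorphism, and smoothness of $\Hilb_{d_1,d_2}$ is inherited from $\hat{H}_{d_1,d_2}$.

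The main obstacle is the last step: one must classify all $\mathbb{G}_m$-invariant ideals of Hilbert function $\HF_{d_1,d_2}$ and match them with points of $\hat{H}_{d_1,d_2}$. The critical ingredient is to show that if $[F,G]=[PL,PH]\in W_k\setminus W_{k+1}$, then a $\mathbb{G}_m$-invariant ideal $I$ with graded pieces of the prescribed dimensions, containing $F$ in degree $d_1$ and $G$ modulo $F\cdot S_{d_2-d_1}$ in degree $d_2$, is uniquely determined by the residual data of a point in $\Hilb_{k,d_2-d_1+k}\times\bar H_{d_1-k,d_2+k}$ (namely $([L,H]$ and the deformation direction giving $[P,LG-HF]$), and that every such choice does produce a valid ideal. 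This is a careful graded commutative algebra computation on $\mathbbm{k}[X_0,X_1]$, using that $\gcd(P,L,H)=1$ in the generic stratum and iterating on the blow-up tower otherwise.
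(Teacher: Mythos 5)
Your treatment of the embedding, of the morphism $\pi$, and of the identification of $H_{d_1,d_2}$ with an open subset over which $\pi$ is an isomorphism follows essentially the same lines as the paper (the paper deduces the closed immersion from projectivity of the multigraded Hilbert scheme, quoted from Haiman--Sturmfels, together with the fact that a proper monomorphism is a closed immersion; your variant, deducing projectivity \emph{from} the closed immersion, is slightly circular, since ``monomorphism with closed image'' does not by itself imply ``closed immersion'' --- one needs universal closedness, i.e.\ properness, as an input). This first issue is minor and repairable by citing the projectivity statement of the multigraded Hilbert scheme first.

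The genuine gap is in your smoothness argument. The paper obtains smoothness (and connectedness) of $\Hilb_{d_1,d_2}$ directly from a general theorem on multigraded Hilbert schemes of $\mathbbm{k}[X_0,X_1]$ (\cite{Evain} Theorem 1, \cite{SmithMaclagan} Theorem 1.1), and only \emph{afterwards}, in the final proposition of Section \ref{idhilb}, identifies $\Hilb_{d_1,d_2}$ with $\hat{H}_{d_1,d_2}$ --- and that identification \emph{uses} smoothness of $\Hilb_{d_1,d_2}$ as an input (to control the rational map $r$ outside codimension $2$ and to identify Picard groups). Your plan reverses this order, and the reversal does not go through as sketched. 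First, constructing the morphism $\tilde\pi:\hat{H}_{d_1,d_2}\to\Hilb_{d_1,d_2}$ amounts to producing a flat universal family over the blow-up tower; the paper explicitly states that it does not know how to do this directly, which is precisely why it argues via ample line bundles instead. Second, even granting a bijective birational $\tilde\pi$, Zariski's main theorem only yields an isomorphism when the \emph{target} is normal; irreducibility is not enough (think of the normalization of a cuspidal curve). Since normality of $\Hilb_{d_1,d_2}$ is essentially the smoothness you are trying to establish, the argument is circular at this point. You need either the external smoothness result for multigraded Hilbert schemes in two variables, or an honest deformation-theoretic computation ($\dim T_Z\Hilb_{d_1,d_2}=\dim\hat H_{d_1,d_2}$ at every point), neither of which appears in your sketch.
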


\begin{proof}
The scheme $\Hilb_{d_1,d_2}$ is projective by \cite{multigraded} Corollary 1.2. The subschemes parametrized by $\Hilb_{d_1,d_2}$ 
satisfy all equations of degree $\geq d_1+d_2$, hence are set-theoretically supported on the origin.
Thus, they may be viewed as closed subschemes of $\mathbb{P}^2$.
The induced
natural transformation $\Hilb_{d_1,d_2}\to\Hilb_{\mathbb{P}^2}$ is a monomorphism,
as one sees from the description of the functors of points of these two schemes. By \cite{EGA44} Corollaire 18.12.6, it is a closed immersion.
Moreover, $\Hilb_{d_1,d_2}$ is smooth and connected by
\cite{Evain} Theorem 1 (see also the more general results of \cite{SmithMaclagan} Theorem 1.1).

If $Z\in \Hilb_{d_1,d_2}$, by the choice of $\HF_{d_1,d_2}$, $Z$ satisfies a unique degree $d_1$ equation $F$ up to scalar multiple,
and a unique degree $d_2$ equation $G$ up to scalar multiple and up to a multiple of $F$. This induces a morphism
$\pi: \Hilb_{d_1,d_2}\to\bar{H}_{d_1,d_2}$ given by $\pi(Z)=[F,G]$.
Of course, if $\pi(Z)=[F,G]\in H_{d_1,d_2}$, we necessarily have
$Z=\{F=G=0\}$, as we have an inclusion and the
spaces of degree $l$ equations of $Z$ and $\{F=G=0\}$
have the same dimension for any $l$. 

On the other hand, there is a natural section of $\pi$ above $H_{d_1,d_2}$ given by
$[F,G]\mapsto \{F=G=0\}$. It follows that $\pi$ is an isomorphism above $H_{d_1,d_2}$.
Hence, $\pi$ is birational, and $\Hilb_{d_1,d_2}$ is a smooth compactification of $H_{d_1,d_2}$.
\end{proof}

The goal of this paragraph is to prove that $\Hilb_{d_1,d_2}$ coincides with $\hat{H}_{d_1,d_2}$. A natural way to do it would be to
construct the universal family over $\hat{H}_{d_1,d_2}$. We do not know how to do it directly, and use our knowledge of the
geometry of $\hat{H}_{d_1,d_2}$ instead.

\begin{lem}\label{excexc}
Let $1\leq k\leq d_1-1$. There exists an injective morphism $$e_k:\Hilb_{d_1-k,d_2+k}\times\Hilb_{k,d_2-d_1+k}\to\Hilb_{d_1,d_2}$$
satisfying: if $Z\in\Hilb_{d_1-k,d_2+k}$, $W\in\Hilb_{k,d_2-d_1+k}$, and $\pi(Z)=[F,G]$,
$e_k(Z,W)$ is defined by the equations of the form $FK$ for any equation $K$ of $W$,
and by the equations of $Z$ of degrees $\geq d_2+k$.

The irreducible components of the complement of $H_{d_1,d_2}$ in $\Hilb_{d_1,d_2}$ are exactly the divisors $\Ima(e_k)$.

The natural rational map $r:\hat{H}_{d_1,d_2}\dashrightarrow \Hilb_{d_1,d_2}$ induces by restriction to $E_k$ the natural
rational map $\hat{H}_{d_1-k,d_2+k}\times\hat{H}_{k,d_2-d_1+k} \dashrightarrow\Hilb_{d_1-k,d_2+k}\times\Hilb_{k,d_2-d_1+k}$.
\end{lem}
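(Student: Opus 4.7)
I would define $e_k$ explicitly at the level of ideals: given $(Z,W)$ in the source with $\pi(Z)=[F,G]$, set
\[I := F\cdot I_W + (I_Z)_{\geq d_2+k} \subset \mathbbm{k}[X_0,X_1].\]
Since $F\in I_Z$, one has $F\cdot I_W\subseteq I_Z$, and therefore $I_l=(F\cdot I_W)_l$ for $l<d_2+k$ while $I_l=(I_Z)_l$ for $l\geq d_2+k$. A case analysis using Lemma \ref{HF} then verifies that $\dim I_l=\HF_{d_1,d_2}(l)$: in the range $d_1\leq l\leq d_2+k-1$, the shift $l'=l-(d_1-k)$ lands in case 2 or case 3 of $\HF_{k,d_2-d_1+k}$ and produces the correct value, while for $l\geq d_2+k$ one has $\HF_{d_1-k,d_2+k}(l)=\HF_{d_1,d_2}(l)$ on the nose. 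The construction being algebraic in $(I_Z,I_W,F)$, it works in flat families and defines the morphism $e_k$ by the universal property of the multigraded Hilbert scheme.

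For injectivity, I would recover $F$ (up to scalar) from $I_{d_2+k-1}$: a direct dimension check shows $(I_W)_{d_2-d_1+2k-1}=S_{d_2-d_1+2k-1}$, so $I_{d_2+k-1}=F\cdot S_{d_2-d_1+2k-1}$, whose $\gcd$ is $F$. Dividing the part of $I$ in degrees $<d_2+k$ by $F$ then yields $(I_W)_{l'}$ for all $l'\leq d_2-d_1+2k-1$, which matches the Castelnuovo--Mumford regularity of $W$, so $I_W$ and $W$ are determined. Next, $I_{\geq d_2+k}=(I_Z)_{\geq d_2+k}$ together with $F$ determines $G$ as the unique generator of $(I_Z)_{d_2+k}$ modulo $F\cdot S_{d_2-d_1+2k}$, and its higher-degree part pins down any additional generators of $I_Z$ (needed only in the non-complete-intersection degeneration), so $Z$ is recovered. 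A dimension count gives $\dim\Ima(e_k)=(2(d_1-k)-1)+(2k-1)=2d_1-2=\dim\Hilb_{d_1,d_2}-1$, hence each $\Ima(e_k)$ is an irreducible divisor. Conversely, any $Z'\in\Hilb_{d_1,d_2}\setminus H_{d_1,d_2}$ has $\pi(Z')=[F,G]$ with $\gcd(F,G)$ of degree $d_1-k\geq 1$ for some $k$; applying the inverse recovery to $Z'$ produces $(Z,W)$ with $e_k(Z,W)=Z'$, so the $\Ima(e_k)$ are exactly the irreducible components of the complement.

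Finally, for the compatibility with $r:\hat{H}_{d_1,d_2}\dashrightarrow\Hilb_{d_1,d_2}$, I would invoke the birational identification of $E_k$ from Remark \ref{identbir}: a tangent vector at $[PL,PH]\in W_k$ induced by $[F,G]$ corresponds to $([P,LG-HF],[L,H])\in\hat{H}_{d_1-k,d_2+k}\times\hat{H}_{k,d_2-d_1+k}$. The limit of the corresponding one-parameter family in $\Hilb_{d_1,d_2}$ can be read off from the formal Euclid-type identities of Proposition \ref{identities} together with Lemmas \ref{dividiv} and \ref{dividiv2}; one checks that the limit ideal is precisely $P\cdot I_{[L,H]}+(I_{[P,LG-HF]})_{\geq d_2+k}$, which is $e_k$ applied to the corresponding pair. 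The main obstacle is the conjunction of the Hilbert function verification with the $\gcd$-based inversion: one must check that the regularity of $W$ is exactly the top degree over which $F\cdot I_W$ contributes to $I$, a sharp degree coincidence without which the construction would fail to be invertible.
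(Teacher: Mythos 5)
Your construction of $e_k$ and the proofs of the first two assertions follow essentially the same route as the paper: define the ideal $F\cdot I_W+(I_Z)_{\geq d_2+k}$, check degree by degree against Lemma \ref{HF} that its Hilbert function is $\HF_{d_1,d_2}$ (your case analysis is correct, including the coincidence $\HF_{d_1-k,d_2+k}(l)=\HF_{d_1,d_2}(l)$ for $l\geq d_2+k$), recover $F$ as a gcd to get injectivity, and identify the components of the boundary via $\pi$ and the degree of $\gcd(F,G)$. Your variant of the injectivity step --- extracting $F$ from the single degree $d_2+k-1$, where $(I_W)_{d_2-d_1+2k-1}=S_{d_2-d_1+2k-1}$ --- is a slightly cleaner packaging of the paper's ``gcd of all equations of degree $<d_2+k$'', and your dimension count $\dim\Ima(e_k)=2d_1-2$ is right.

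The one place where your proposal is genuinely off-target is the last assertion. You propose to compute the flat limit $\lim_{t\to 0}[PL+tF,PH+tG]$ in $\Hilb_{d_1,d_2}$ by means of Proposition \ref{identities} and Lemmas \ref{dividiv} and \ref{dividiv2}. Those statements control the leading terms in $t$ of the remainders $r_i$, which are sections of line bundles on the \emph{parameter space} $\bar{H}_{d_1}\times\bar{H}_{d_2}$ (they are used to restrict the linear systems $\hat{\Lambda}_i$ to $E_k$); they are not equations of the subscheme $\{F=G=0\}\subset\mathbb{A}^2$ (indeed $r_i$ has degree $d_1-1-i<d_1$ in the $X_s$, while the ideal of a point of $\Hilb_{d_1,d_2}$ vanishes in those degrees), so it is not clear how they would ``read off'' the limit ideal. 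The paper's argument is much more elementary and is the one you should use: from the syzygy $L\,(PH+tG)-H\,(PL+tF)=t\,(LG-HF)$ one sees that the flat limit satisfies the equations $PL$, $PH$ and $S=LG-HF$; the subscheme they cut out contains $e_k([P,S],[L,H])$ (the remaining equations of $e_k$ in degrees $\geq d_2+k$ are multiples of $P$, hence lie in $(PL,PH)$ in those degrees since $(I_{\{L=H=0\}})_{l'}=S_{l'}$ there), and equality of Hilbert functions forces the limit to equal $e_k([P,S],[L,H])$. You do correctly invoke Remark \ref{identbir} to identify the general point of $E_k$ with such a one-parameter family, and you should also note, as the paper does, that the indeterminacy locus of $r$ has codimension $\geq 2$ (smooth source, proper target), so that $r$ is defined at the generic point of $E_k$ and the restriction makes sense.
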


\begin{proof}
Let us first show that $e_k$ is well-defined. To do so, fix $Z\in\Hilb_{d_1-k,d_2+k}$ and $W\in\Hilb_{k,d_2-d_1+k}$, and
write $\pi(Z)=[F,G]$. Let $Y$ be the subscheme defined by equations of the form $FK$ for any equation $K$ of $W$.
It is easy to describe $\HF_Y$ from $\HF_W$, that is known by Lemma \ref{HF}.
Since $e_k(Z,W)$ is defined by additional equations
of degrees $\geq d_2+k$, it follows that $\HF_{e_k(Z,W)}$ coincides with $\HF_{d_1,d_2}$
for $l<d_2+k$. Moreover, again by Lemma \ref{HF}, the equations of degrees $\geq d_2+k$ of $Y$ are exactly the
multiples of $F$, hence are also equations of $Z$. It follows that the equations of degrees $\geq d_2+k$ of
$Z$ and $e_k(Z,W)$ are the same. Since $\HF_Z$ is known by Lemma \ref{HF}, one checks that 
$\HF_{e_k(Z,W)}$ coincides with $\HF_{d_1,d_2}$
for $l\geq d_2+k$. We have proven as wanted that
$e_k(Z,W)\in\Hilb_{d_1,d_2}$, hence that $e_k$ is well-defined.

It is easy to see from the above construction that $e_k$ is injective. Indeed, $F$ is recovered as the
greatest common divisor of the equations of 
$e_k(Z,W)$ of degrees $<d_2+k$, the equations of $W$ are recovered by dividing these equations by $F$, and the additional
equations of $Z$ are recovered as they are the equations of $e_k(Z,W)$ of degrees $\geq d_2+k$.

By injectivity of $e_k$, a dimension computation shows that $\Ima(e_k)$ is a divisor in $ \Hilb_{d_1,d_2}$.
It is easily checked that $\pi(\Ima(e_k))=W_k$: this shows that these divisors are distinct and do not
meet $H_{d_1,d_2}$. Let us show conversely that if $Y\in \Hilb_{d_1,d_2}\setminus H_{d_1,d_2}$, $Y$ is included in one of these divisors.
Let $k$ be such that $\pi(Y)\in W_k\setminus W_{k-1}$, and write  $\pi(Y)=[PL,PH]$ with $\deg(P)=d_1-k$.
Set $W=\{L=H=0\}$, and let $Z$ be the subscheme defined by $P$ and by all the equations of $Y$ of degrees $\geq d_2+k$.
It is straightforward to check that $Z\in \Hilb_{d_1-k,d_2+k}$, $W\in\Hilb_{k,d_2-d_1+k}$
and $Y=e_k(Z,W)$.

It remains to prove the last assertion. The natural rational map $r:\hat{H}_{d_1,d_2}\dashrightarrow \Hilb_{d_1,d_2}$ is defined
on an open set whose complement has codimension $\geq 2$ because  $\hat{H}_{d_1,d_2}$ is smooth and $ \Hilb_{d_1,d_2}$
is proper. This set of definition intersects the divisor $E_k$. Let $x=([P,S],[L,H])\in  H_{d_1-k,d_2+k}\times H_{k,d_2-d_1+k}$
be a general point of $E_k$ included in this locus of definition. By Remark \ref{identbir}, there exist
$F\in S_{d_1}$ and $G\in S_{d_2}$ such that $x=([P,LG-HF],[L,H])$ and $x=\lim_{t\to 0}[PL+tF,PH+tG]$
in $\hat{H}_{d_1,d_2}$. On the other hand, $\lim_{t\to 0}[PL+tF,PH+tG]$ in $\Hilb_{d_1,d_2}$ satisfies the equations
$PL$, $PH$ and $S=LG-HF$: it is included in, hence equal to $e_k([P,S],[L,H])$. This ends the proof of the lemma.
\end{proof}

It is now possible to conclude:

\begin{prop}
The rational map  $r:\hat{H}_{d_1,d_2}\dashrightarrow \Hilb_{d_1,d_2}$ is an isomorphism.
\end{prop}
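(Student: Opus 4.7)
The plan is to argue by induction on $d_1$. When $d_1=1$, the statement is immediate: a degree $1$ form cannot divide a degree $d_2$ form nontrivially, so $\hat{H}_{1,d_2}=\bar{H}_{1,d_2}=H_{1,d_2}$, and Proposition \ref{hilbhilb} identifies this space with $\Hilb_{1,d_2}$ via $r$.

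For the inductive step, assume the proposition for all smaller values of $d_1$, and consider $r:\hat{H}_{d_1,d_2}\dashrightarrow\Hilb_{d_1,d_2}$. Since $\hat{H}_{d_1,d_2}$ is smooth (Proposition \ref{blowup}(i)) and $\Hilb_{d_1,d_2}$ is projective (Proposition \ref{hilbhilb}), $r$ is already a morphism outside a closed subset of codimension $\geq 2$. The last assertion of Lemma \ref{excexc} expresses $r|_{E_k}$ as the product rational map $\hat{H}_{d_1-k,d_2+k}\times\hat{H}_{k,d_2-d_1+k}\dashrightarrow\Hilb_{d_1-k,d_2+k}\times\Hilb_{k,d_2-d_1+k}$ followed by $e_k$; by the inductive hypothesis the product map is an isomorphism. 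Combined with the strict normal crossing structure of the $E_k$ and the description in Proposition \ref{blowup}(iii),(iv) of their intersections as products of smaller $\hat{H}$'s, this lets us extend $r$ stratum by stratum to a morphism defined on all of $\hat{H}_{d_1,d_2}$.

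Next, I would verify that the extended $r$ is bijective on closed points. It is an isomorphism onto $H_{d_1,d_2}$ (Proposition \ref{hilbhilb}), and Lemma \ref{excexc} describes the complement of $H_{d_1,d_2}$ in $\Hilb_{d_1,d_2}$ as $\bigcup_{k}\Ima(e_k)$. Given $Y=e_k(Z,W)$, the injectivity of $e_k$ determines $(Z,W)$ uniquely, and by induction there exist unique preimages of $Z$ and $W$ in the corresponding smaller $\hat{H}$'s, yielding a unique preimage of $Y$ in $E_k$. Points lying in several $\Ima(e_k)$ simultaneously are handled analogously on the appropriate intersection strata.

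The conclusion then follows from Zariski's main theorem: $r$ is a bijective morphism between projective varieties, hence finite; it is birational, and its target $\Hilb_{d_1,d_2}$ is normal (smooth by Proposition \ref{hilbhilb}); therefore $r$ is an isomorphism. The principal obstacle is the first step, extending $r$ over the codimension $\geq 2$ indeterminacy locus. The cleanest route is to exploit the inductive fact that $r|_{E_k}$ is already an isomorphism, together with the explicit product description of intersections of exceptional divisors from Proposition \ref{blowup}, so that the map can be built up inductively on the strata of the normal crossings divisor $\bigcup_k E_k$.
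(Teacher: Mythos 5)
There is a genuine gap at the first and decisive step: the extension of $r$ to a morphism on all of $\hat{H}_{d_1,d_2}$. Knowing that the restriction of $r$ to each exceptional divisor $E_k$ is (by induction and Lemma \ref{excexc}) an isomorphism onto $\Ima(e_k)$ does not let you extend $r$ across its indeterminacy locus: that locus has codimension $\geq 2$ in $\hat{H}_{d_1,d_2}$ and may perfectly well be a divisor \emph{inside} some $E_k$, and defining $r$ there by the value of $r|_{E_k}$ says nothing about regularity in the directions transverse to $E_k$. This is exactly the situation of a flip or flop: a birational map between smooth projective varieties that is an isomorphism in codimension $1$, restricts to a morphism on the relevant strata, is bijective wherever it is defined, and yet is not a morphism. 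Since $\hat{H}_{d_1,d_2}$ was itself built by blow-ups precisely in order to dominate a chain of such small modifications, nothing in the normal crossing structure of $\bigcup_k E_k$ rules this out a priori; and the paper explicitly states that it does not know how to produce the morphism directly (which would amount to constructing the universal family over $\hat{H}_{d_1,d_2}$). Once this step fails, the bijectivity check and the appeal to Zariski's main theorem have nothing to apply to.

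The paper's actual argument circumvents the extension problem entirely. From Lemma \ref{excexc} one knows $r$ is an isomorphism in codimension $1$; letting $U$ be the largest open set on which $r$ is an isomorphism, the Picard groups of both (smooth) varieties are identified with $\Pic(U)$, and the key point is to exhibit a single line bundle on $U$ that is ample on both sides. Concretely, the Grothendieck polarization $\det(\mathcal{F}_d)^{-1}$ of $\Hilb_{d_1,d_2}$ is computed (via Lemmas \ref{HF} and \ref{multexc}, using the eigenspace decomposition of the tautological bundle) to equal $\mathcal{L}_{-1}^{\otimes(d_2-d_1)(d_2-d_1+1)/2}\otimes\bigotimes_{i=0}^{d_1-2}\mathcal{L}_i$, which lies in the interior of the nef cone of $\hat{H}_{d_1,d_2}$ by Proposition \ref{nef} and is therefore ample there by Kleiman's criterion; both varieties are then recovered as $\Proj\bigoplus_{k\geq 0}H^0(U,\mathcal{L}^{\otimes k})$. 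If you want to salvage your inductive scheme, you must replace the ``extend stratum by stratum'' step by an argument of this kind --- a comparison of ample cones across the small modification --- or by an actual construction of a flat family over $\hat{H}_{d_1,d_2}$.
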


\begin{proof}
By Lemma \ref{excexc}, we know that $r$ is an isomorphism in codimension $1$.
Let us denote by $U$ the biggest open subset over which $r$ is an isomorphism: its complement has codimension $\geq 2$ in both
$\hat{H}_{d_1,d_2}$ and $\Hilb_{d_1,d_2}$. Since $\hat{H}_{d_1,d_2}$ and $\Hilb_{d_1,d_2}$
are smooth, their Picard groups are identified with $\Pic(U)$. We will construct a line bundle $\mathcal{L}$ on $U$
that is ample on both $\hat{H}_{d_1,d_2}$ and $\Hilb_{d_1,d_2}$. This will prove the assertion,
because $\hat{H}_{d_1,d_2}$ and $\Hilb_{d_1,d_2}$ will be both isomorphic to $\Proj\bigoplus_{k\geq 0}H^0(U,\mathcal{L}^{\otimes k})$,
as they are normal.

Let $\Hilb_{\mathbb{P}^2}^P$ be the connected component of $\Hilb_{\mathbb{P}^2}$  containing $\Hilb_{d_1,d_2}$
as in  Proposition \ref{hilbhilb}. By Grothendieck's construction
of the Hilbert scheme as a subscheme of a Grassmannian, if  $d\gg 0$ and
$\mathcal{F}_d$ is the tautological subbundle of
$H^0(\mathbb{P}^2,\mathcal{O}(d))$ on $\Hilb_{\mathbb{P}^2}^P$,
$\det(\mathcal{F}_d)^{-1}$ is ample on $\Hilb_{\mathbb{P}^2}^P$.
As a consequence, its res\-triction $\mathcal{L}$ is ample on $\Hilb_{d_1,d_2}$.
Consider $S_l$ as a constant vector bundle on $U$, and let $\mathcal{E}_l\subset S_l$ be the tautological subbundle.
Notice that $\mathcal{F}_d|_{\Hilb_{d_1,d_2}}$
splits as a direct sum of eigenspaces with respect to the $\mathbb{G}_m$-action, inducing an isomorphism
$\mathcal{F}_d|_{\Hilb_{d_1,d_2}}\simeq\bigoplus_{l=0}^d\mathcal{E}_l$.
Consequently, $\mathcal{L}\simeq\bigotimes_{l=0}^d\det(\mathcal {E}_l)^{-1}$.

By Lemma \ref{HF}, if $l<d_1$, $\mathcal{E}_l=0$,
and if $l\geq d_1+d_2$, $\mathcal{E}_l=S_l$. In both cases, $\det(\mathcal{E}_l)\simeq\mathcal{O}$.
By Lemma \ref{HF}, if $d_1\leq l\leq d_2-1$, there is an isomorphism $S_{l-d_1}(-1,0)\simeq \mathcal{E}_l$ given by multiplication by $F$. 
It follows that $\det(\mathcal{E}_l)\simeq\mathcal{O}(-(l-d_1+1),0)$. If $d_2\leq l\leq d_1+d_2-1$,
there is a morphism of coherent sheaves $S_{l-d_1}(-1,0)\oplus S_{l-d_2}(0,-1)\to \mathcal{E}_l$ given by multiplication by
$F$ and $G$. This morphism is an isomorphism over $H_{d_1,d_2}$, by Lemma \ref{HF}. In particular, it is injective, and its cokernel
$\mathcal{Q}$ is set-theoretically included in the union of the exceptional divisors.
Lemma \ref{multexc} describes $\mathcal{Q}$ in a neighbourhood of the generic points of the exceptional divisors, allowing to compute that
$\det(\mathcal{E}_l)\simeq \mathcal{O}(-(l-d_1+1),-(l-d_2+1))(\sum_{k=1}^{l-d_2}(l-d_2-k+1)E_k)$.
We recognize: $\det(\mathcal{E}_l)\simeq \mathcal{L}_{l-d_2}^{-1}$.

Taking into account the fact that
$\mathcal{L}_{d_1-1}$ is trivial by Lemma \ref{restrli} (i), one obtains that 
$\mathcal{L}\simeq\mathcal{L}_{-1}^{\otimes \frac{(d_2-d_1)(d_2-d_1+1)}{2}}\otimes\bigotimes_{i=0}^{d_1-2}\mathcal{L}_i$
is ample on $\Hilb_{d_1,d_2}$ (and independent of $d\geq d_1+d_2-1$).
On the other hand, $\mathcal{L}$ is in the interior of the nef cone of $\hat{H}_{d_1,d_2}$ by Proposition \ref{nef}.
Hence, it is also ample on $\hat{H}_{d_1,d_2}$ by Kleiman's criterion (see \cite{Lazarsfeld1} Theorem 1.4.23). This concludes the proof.
\end{proof}

We needed the following lemma:

\begin{lem}\label{multexc}
Let $d_2\leq l\leq d_1+d_2-1$, let $\mathcal{Q}$ be the cokernel of the
morphism of coherent sheaves $S_{l-d_1}(-1,0)\oplus S_{l-d_2}(0,-1)\to \mathcal{E}_l$ on $U$ given by multiplication by
$F$ and $G$, and let $1\leq k\leq d_1-1$.

If $k>l-d_2$, $\mathcal{Q}$ is trivial in a neighbourhood of the generic point of $E_k$.

If $k\leq l-d_2$, $\mathcal{Q}$ is a rank $l-d_2-k+1$ vector bundle on $E_k$
in a neighbourhood of the generic point of $E_k$,
\end{lem}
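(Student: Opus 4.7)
The plan is to reduce both cases to a fiber-dimension computation at the generic point $\eta_k$ of $E_k$, and then propagate the conclusion to a Zariski neighborhood. By Lemma \ref{excexc}, the point $\eta_k$ corresponds via the isomorphism $r$ to a general subscheme $e_k(Z,W)$, with $Z = \{P = S = 0\} \in H_{d_1-k, d_2+k}$ and $W = \{L = H = 0\} \in H_{k, d_2-d_1+k}$. Consequently $F = PL$ and $G = PH$ at $\eta_k$, so the fiber of $\Ima(\mu)$ equals $P \cdot (L \cdot S_{l-d_1} + H \cdot S_{l-d_2}) = P \cdot \mathcal{E}^W_{l-d_1+k}$, where $\mathcal{E}^W$ denotes the tautological bundle on $\Hilb_{k, d_2-d_1+k}$. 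The fiber of $\mathcal{E}_l$ consists, by Lemma \ref{excexc}, of these same elements together with the degree $l$ equations of $Z$ (the latter present only when $l \geq d_2+k$), and has total dimension $2l - d_1 - d_2 + 2$ by Lemma \ref{HF}.

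When $k > l-d_2$, the contribution from $Z$ is absent, the two fibers agree by a dimension count using Lemma \ref{HF} applied to $W$, and therefore $\mathcal{Q}_{\eta_k} = 0$; since the support of a coherent sheaf is closed, $\mathcal{Q}$ vanishes on a neighborhood of $\eta_k$. When $k \leq l-d_2$, the degree $l-d_1+k$ is large enough that $\mathcal{E}^W_{l-d_1+k} = S_{l-d_1+k}$, so $\Ima(\mu)|_{\eta_k} = P \cdot S_{l-d_1+k}$ has dimension $l - d_1 + k + 1$, while $\mathcal{E}_l|_{\eta_k} = P \cdot S_{l-d_1+k} + S \cdot S_{l-d_2-k}$ (a direct sum for degree reasons); subtracting yields $\dim \mathcal{Q}_{\eta_k} = l - d_2 - k + 1$, matching the predicted rank.

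The main obstacle is to show, in this second case, that a local uniformizer $t$ of $E_k$ annihilates $\mathcal{Q}$ near $\eta_k$, so that $\mathcal{Q}$ descends to an honest $\mathcal{O}_{E_k}$-module rather than a thicker $\mathcal{O}_U$-module supported on $E_k$. To do this I will expand $F = PL + t\phi + O(t^2)$ and $G = PH + t\gamma + O(t^2)$; by Remark \ref{identbir}, the tangent direction $(\phi, \gamma)$ determines $S$ as $L\gamma - H\phi$. For an arbitrary fiber element $e = P\alpha + S\beta$ with $\alpha \in S_{l-d_1+k}$, $\beta \in S_{l-d_2-k}$, I will produce $(a, b) = (-H\beta + t a_1, L\beta + t b_1)$ satisfying $aF + bG \equiv te \pmod{t^2}$: the zeroth-order vanishing comes from the syzygy $(-H\beta) L + (L\beta) H = 0$, and the first-order equation reduces to $a_1 L + b_1 H = \alpha$, which is solvable because $\mathcal{E}^W_{l-d_1+k} = S_{l-d_1+k}$. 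A Nakayama argument applied to the finitely generated $\mathcal{O}_{U, \eta_k}$-module $\mathcal{N} := t \mathcal{E}_l / (\Ima(\mu) \cap t \mathcal{E}_l)$ (the mod-$t^2$ computation gives $\mathcal{N} = t\mathcal{N}$, hence $\mathcal{N} = 0$) then upgrades the congruence to the genuine inclusion $t \mathcal{E}_l \subseteq \Ima(\mu)$ at the stalk. Finally, since $\mathcal{Q}$ is then a coherent $\mathcal{O}_{E_k}$-module on the integral scheme $E_k$ with the correct fiber dimension at $\eta_k$, generic freeness exhibits it as a locally free sheaf of rank $l - d_2 - k + 1$ on an open neighborhood of $\eta_k$ in $E_k$.
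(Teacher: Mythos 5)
Your proof is correct and follows essentially the same strategy as the paper's: compute the fiber of $\mathcal{Q}$ at a general point of $E_k$ via Lemma \ref{excexc} and Lemma \ref{HF}, then use the first-order expansion $F=PL+t\phi$, $G=PH+t\gamma$ together with the identification $S=L\gamma-H\phi$ from Remark \ref{identbir} to show that $t$ annihilates $\mathcal{Q}$ near the generic point of $E_k$. The only (immaterial) difference is that you establish $t\mathcal{E}_l\subseteq\Ima(\mu)$ modulo $t^2$ and close with Nakayama at the DVR $\mathcal{O}_{U,\eta_k}$, whereas the paper restricts to a transverse arc $\Spec(\mathbbm{k}[[t]])$ and proves the dual statement that kernel elements modulo $t^2$ are divisible by $t$.
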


\begin{proof}
Let $x=([P,S],[L,H])\in  H_{d_1-k,d_2+k}\times H_{k,d_2-d_1+k}$
be a general point of $E_k\cap U$. By Remark \ref{identbir}, there exist
$F\in S_{d_1}$ and $G\in S_{d_2}$ such that $x=([P,LG-HF],[L,H])$ and $x=\lim_{t\to 0}[PL+tF,PH+tG]$
in $\hat{H}_{d_1,d_2}$.
Consider the morphism $i:\Spec(\mathbbm{k}[[t]])\to U$ given by $t\mapsto[PL+tF,PH+tG]$. By base change, we get morphisms of
$\mathbbm{k}[[t]]$-modules $i^*S_{l-d_1}\oplus i^* S_{l-d_2}\to i^*\mathcal{E}_l\subset i^*S_l$. Note that $i^*\mathcal{E}_l$ 
is still a subbundle of $i^*S_l$ by flatness of $\mathcal{E}_l$, that $i^*\mathcal{Q}$ is the cokernel of 
$i^*S_{l-d_1}\oplus i^* S_{l-d_2}\to i^*\mathcal{E}_l$ by right-exactness of tensor product,
and hence that  $i^*\mathcal{Q}$ is the torsion submodule of the cokernel of the morphism
 $i^*S_{l-d_1}\oplus i^* S_{l-d_2}\to i^*S_l$ given by $(A,B)\mapsto A(PL+tF)+B(PH+tG)$.

Let us first compute $\mathcal{Q}_x=(i^*\mathcal{Q})_0$: it is the cokernel of $S_{l-d_1}\oplus S_{l-d_2}\to (\mathcal{E}_l)_0$
given by $(A,B)\mapsto APL+BPH$. Since $S_{l-d_1}\oplus S_{l-d_2}$ and $\mathcal{E}_l$ have the same rank,
it has the same dimension as the kernel of
$(A,B)\mapsto APL+BPH$. This kernel is easy to compute (as $L$ is prime to $H$):
it has dimension $0$ if $k> l-d_2$ and dimension $l-d_2-k+1$ if $k\leq l-d_2$.

It remains to show that the scheme-theoretical support of $\mathcal{Q}$ in a neighbourhood of the generic point of $E_k$
is included in $E_k$ with its reduced structure. By compatibility of taking the support with base-change,
it suffices to show that the support of $i^*\mathcal{Q}$ is included in the reduced origin
of $\Spec(\mathbbm{k}[[t]])$. To do so, one needs to prove that if
$T$ is a section of $i^*\mathcal{Q}$ such that $t^2T=0$, then $tT=0$.
This boils down to proving that if $A\in i^*S_{l-d_1}=S_{l-d_1}\otimes_k \mathbbm{k}[[t]]$
and $B\in i^* S_{l-d_2}=S_{l-d_2}\otimes_k \mathbbm{k}[[t]]$ are such
that $t^2|A(PL+tF)+B(PH+tG)$, then $t|A$ and $t|B$. Let us introduce $A_0,A_1\in S_{l-d_1}$
and $B_0,B_1\in i^* S_{l-d_2}$ the terms of $A,B$ of order $0$ and $1$ in $t$. The hypothesis means that
$A_0PL+B_0PH=0$ and $P(A_1L+B_1H)+A_0F+A_1G=0$. From the first equation and because $L$ is prime to $H$,
we see that it is possible to write $A_0=CH$ and $B_0=-CL$.
Consequently, one sees from the second equation and because $P$ is prime to $S$ that $P|C$. For degree reasons, $C=0$, hence $A_0=B_0=0$ as wanted.
\end{proof}


\bibliographystyle{plain}
\bibliography{biblio}

\end{document}